\DeclareSymbolFont{cyrletters}{OT2}{wncyr}{m}{n}
\DeclareMathSymbol{\Sha}{\mathalpha}{cyrletters}{"58}
\newcommand{\X}{X}
\newcommand{\p}{p}
\newcommand{\oh}{\mbox{$\frac{1}{2}$}}
\DeclareFontFamily{OT1}{rsfs}{}
\DeclareFontShape{OT1}{rsfs}{n}{it}{<-> rsfs10}{}
\DeclareMathAlphabet{\mathscr}{OT1}{rsfs}{n}{it}
\newtheorem{prop}{Proposition}[section]
\newtheorem{thm}[prop]{Theorem}
\newtheorem{corollary}[prop]{Corollary}
\newtheorem{lemma}[prop]{Lemma}
\newtheorem*{defn*}{Definition}
\newtheorem{conj}{Conjecture}
\numberwithin{equation}{section}
\renewcommand{\Re}{{\mathfrak{Re}}}
 \newcommand{\mymod}[1]{(\operatorname{mod} #1)}
\DeclareMathOperator{\Tam}{Tam}
\DeclareMathOperator{\Gal}{Gal}
\DeclareMathOperator{\Cov}{Cov}
\DeclareMathOperator{\Var}{Var}
\DeclareFontFamily{U}{mathx}{}
\DeclareFontShape{U}{mathx}{m}{n}{<-> mathx10}{}
\DeclareSymbolFont{mathx}{U}{mathx}{m}{n}
\DeclareMathAccent{\widehat}{0}{mathx}{"70}
\DeclareMathAccent{\widecheck}{0}{mathx}{"71}
\begin{document}

\title[]{On distributions of $L'$-values and orders of Sha groups in families of quadratic twists}

\dedicatory{}

\author[P.-J. Wong]{Peng-Jie Wong}

\address{Department of Applied Mathematics\\
National Sun Yat-Sen University\\
Kaohsiung City, Taiwan}
\email{pjwong@math.nsysu.edu.tw}

\subjclass[2000]{Primary 11G40; Secondary 11F11, 11G05, 11M41}

%  11G40  	$L$-functions of varieties over global fields; Birch-Swinnerton-Dyer conjecture

%  11F11  	Holomorphic modular forms of integral weight
%  11G05  	Elliptic curves over global fields
%  11M41  	Other Dirichlet series and zeta functions

%\date{\today}

\keywords{$L$-functions of quadratic twists of modular forms and elliptic curves, Tate-Shafarevich groups, joint distribution}

\thanks{P.J.W. is currently supported by the NSTC grant 111-2115-M-110-005-MY3.}

\begin{abstract}
In this article, we aim to establish a prototype result regarding lower bounds of (joint) distributions of central $L'$-values through extending a method of {Radziwi\l\l} and Soundararajan of proving conditional bounds for distributions of central $L$-values (via the one-level density of low-lying zeros of involving $L$-functions). To illustrate this, we give several conditional bounds towards joint distributions of central $L'$-values and orders of Tate-Shafarevich groups in rank-one families of quadratic twists. As an application, we derive a simultaneous non-vanishing result for central $L'$-values in families of quadratic twists of triples of holomorphic modular forms.
\end{abstract}

\maketitle

\newcommand{\pj}[1]{{\color{blue} \sf  PJ: #1}}

\section{Introduction}

The non-vanishing and moments of $L$-functions at $s=\frac{1}{2}$ is an attractive research area in number theory. As an analogue of Selberg's central limit theorem on the normality of the distribution of $
\log |\zeta( \frac{1}{2} + it)|$, a beautiful conjecture of Keating and Snaith predicts analogous normality of the distribution of logarithms of central $L$-values of certain quadratic twists of elliptic curves (see \cite{KS}), and magnificent progress towards this conjecture has been made by {Radziwi\l\l} and Soundararajan \cite{RaSo15, RaSo}. Furthermore, the mean values of the derivatives of quadratic-twisted modular $L$-functions have been  settled by Bump-Friedberg-Hoffstein \cite{BFH} and M.R. Murty-V.K. Murty \cite{MuMu}, independently. (See also \cite{Iw,Mu11-CM,Mu11-TAMS,Sh}.) Moreover, the second moments of these $L'$-values have been established by Petrow \cite{Pe} (under GRH, based on the work of Soundararajan-Young \cite{SY}) and by Kumar-Mallesham-Sharma-Singh \cite{KMS} (unconditionally, based on the work of Li \cite{Li}). Inspired by these works, in this article, we shall extend the study of the above-mentioned conjecture of Keating and Snaith to the logarithms of central $L'$-values of quadratic-twisted modular forms as follows.

Let $f$ be a holomorphic cuspidal newform of even weight $k$ on the congruence subgroup $\Gamma_0(N)$ with the trivial central character. The associated (modular) $L$-function is defined by
$
L(s,f) =\sum_{n=1}^\infty \frac{\lambda_{f}(n)}{n^s},
$ 
for $\Re(s)>1$, which extends to an entire function. (Recall that the celebrated work of Deligne implies $|\lambda_{f}(n)| \le d_2(n)$ for all $n$.) Moreover, setting
$
\Lambda(s,f) = (\frac{\sqrt{N}}{2\pi})^s \Gamma(s+\frac{k-1}{2}) L(s,f),
$
one has the functional equation
$
\Lambda(s,f) = \epsilon_f\Lambda(1-s,f),
$ 
where $\epsilon_f =\pm 1$ is the root number of $f$. Furthermore, throughout our discussion, we will let $d$ denote a fundamental discriminant coprime to $2N$, and $\chi_d =(\frac{d}{\cdot})$ be the associated primitive quadratic (Dirichelt) character. As each quadratic twist $f\otimes \chi_d$ defines a newform on $\Gamma_0(N|d|^2)$, there is an associated twisted $L$-function defined by
$$
L(s,f\otimes \chi_d) =\sum_{n=1}^\infty \frac{\lambda_{f}(n)\chi_d(n)}{n^s}
$$
whose completed $L$-function
$$
\Lambda(s,f\otimes \chi_d) =  \bigg(\frac{\sqrt{N}|d|}{2\pi}\bigg)^s  \Gamma(s+\frac{k-1}{2}) L(s,f\otimes \chi_d)
$$
extends to an entire function and satisfies $$
\Lambda(s,f\otimes \chi_d) = \epsilon_f(d)\Lambda(1-s,f\otimes \chi_d)\quad\text{with}\quad 
\epsilon_f(d) = \epsilon_f\chi_d(-N).
$$
Clearly, $L(\frac{1}{2},f\otimes \chi_d) = 0$ if $\epsilon_f(d) = -1$. Thus, in what follows, we shall consider 
$$
\mathcal{F}_f = \{d :\text{$d$ is a fundamental discriminant with $(d, 2N)$ = 1 and $\epsilon_f(d) = -1$} \}
$$
and study $L'(\frac{1}{2},f\otimes \chi_d)$ over $\mathcal{F}_f$.

Our main theorem is the following prototype result that particularly illustrates how to extend the general principle of {Radziwi\l\l} and Soundararajan \cite{RaSo} to study (joint) distributions of central $L'$-values via the one-level density of low-lying zeros of involving $L$-functions.

\begin{thm}\label{main-thm-M-forms}
 Fix $M\in\{1,2,3\}$. Let $f_j$ be distinct holomorphic cuspidal newforms of even weight $k_j$ on the congruence subgroup $\Gamma_0(N_j)$ with trivial central character, and let $\mathcal{F} =\cap_{j=1}^{M}\mathcal{F}_{f_j} $  Assume the generalised Riemann hypothesis (GRH) for all twisted $L$-functions $L(s,f_j \otimes \chi)$ with  Dirichlet characters $\chi$. Then for any fixed $\underline{\alpha}= (\alpha_1,\ldots,\alpha_M)$ and $\underline{\beta}=(\beta_1,\ldots,\beta_M)$, as $ X \rightarrow \infty$, 
\begin{align*}
 \begin{split}
&\#\bigg\{ d\in\mathcal{F}, X< |d|\le 2X :   \frac{\log |L'(\oh,f_j \otimes \chi_d)| -\frac{1}{2} \log \log |d|}{\sqrt{ \log \log |d|}} \in (\alpha_j,\beta_j)\enspace \forall 1\le j\le M \bigg\}\\
&\ge C_M (\Psi_M(\underline{\alpha},\underline{\beta})  +o(1) )
\# \{ d \in\mathcal{F} :  X< |d|\le 2X\},
 \end{split}
\end{align*}
where 
 \begin{equation}\label{def-Psi_M}
C_M
= 1-\frac{M}{4} 
\quad
\text{and}
\quad 
\Psi_M(\underline{\alpha},\underline{\beta}) 
=\int_{(\alpha_1,\beta_1)\times\cdots\times(\alpha_M,\beta_M)} \frac{1}{2\pi}e^{-\frac{1}{2}{\bf v}^{\mathrm{T}}{\bf v}} d{\bf v}.
\end{equation}
\end{thm}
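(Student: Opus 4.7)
The plan is to adapt the Radziwi{\l\l}--Soundararajan framework \cite{RaSo} to derivatives of $L$-values by combining a Hadamard-product/explicit-formula argument with a one-level density input, and then a method-of-moments analysis of a short prime Dirichlet polynomial. For each $f_j$, since $\epsilon_{f_j}(d)=-1$ forces a zero of $L(s,f_j\otimes\chi_d)$ at $s=\oh$, I would factor $L(s,f_j\otimes\chi_d)=(s-\oh)\widetilde L_j(s)$ near the central point, so that $\widetilde L_j(\oh)=L'(\oh,f_j\otimes\chi_d)$. Applying Hadamard factorization to $\widetilde L_j$ under GRH together with a Selberg-type explicit formula should yield an identity of the shape
$$
\log|L'(\oh,f_j\otimes\chi_d)|
=\Re\sum_{p\le Y}\frac{\lambda_{f_j}(p)\chi_d(p)}{\sqrt p}\,w_Y(p)+\oh\log\log|d|+E_j(d;Y),
$$
where $Y$ is a suitable small power of $\log|d|$, $w_Y$ is a smoothing weight, and $E_j(d;Y)$ collects the contribution of the low-lying non-trivial zeros of $\widetilde L_j$. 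The bias $+\oh\log\log|d|$, opposite in sign to the rank-zero case treated in \cite{RaSo}, is exactly what the removal of the forced zero produces.

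The next step is to control $E_j(d;Y)$: I would aim to show that $E_j(d;Y)=o(\sqrt{\log\log|d|})$ on $\mathcal F$ outside a subset of density at most $\tfrac14+o(1)$. For this I would invoke the one-level density of low-lying zeros of $L(s,f_j\otimes\chi_d)$ as $d$ varies over $\mathcal F_{f_j}$ under GRH, applied with a Paley--Wiener test function whose Fourier transform is supported strictly inside $(-2,2)$. This controls the density of $d$ for which a non-trivial zero of $\widetilde L_j$ lies unusually close to $\oh$ on the scale $1/\log|d|$, which is precisely the scenario that would spoil the Dirichlet polynomial approximation. Intersecting the $M$ resulting good sets and applying a union bound yields a subset $\mathcal G\subseteq\mathcal F$ with
$$
\#\{d\in\mathcal G:X<|d|\le 2X\}\ge\bigl(1-\tfrac M4+o(1)\bigr)\#\{d\in\mathcal F:X<|d|\le 2X\},
$$
which is the origin of the constant $C_M=1-\tfrac M4$ appearing in \eqref{def-Psi_M}.

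On $\mathcal G$ the problem reduces to the joint distribution of the prime polynomials $P_j(d;Y):=\Re\sum_{p\le Y}\lambda_{f_j}(p)\chi_d(p)p^{-1/2}w_Y(p)$. Using the Rankin--Selberg-type orthogonality $\sum_{p\le Y}\lambda_{f_j}(p)\lambda_{f_k}(p)/p\sim\delta_{jk}\log\log Y$ for the distinct $f_j$, together with the standard Poisson-summation/quadratic character sum bounds needed for higher moments, I would compute all joint moments of the normalised vector $\bigl(P_j(d;Y)/\sqrt{\log\log|d|}\bigr)_{j=1}^M$ and verify convergence to those of $M$ independent standard Gaussians. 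Combining the resulting joint CLT with the approximation from the first step and the density bound on $\mathcal G$ then delivers the claimed lower bound with the Gaussian density $\Psi_M(\underline\alpha,\underline\beta)$.

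The principal obstacle is the first step: setting up the Hadamard-product identity so that it isolates the forced zero cleanly, converts $\log|L'(\oh,f_j\otimes\chi_d)|$ into a short prime sum plus a zero contribution tractable by one-level density, and produces the bias $+\oh\log\log|d|$ with the correct sign. A secondary calibration difficulty is that the Fourier support admissible in the one-level density step must simultaneously be wide enough to yield the $\tfrac14$-density exception bound and be compatible with the length $Y$ needed for the method of moments to produce Gaussian behaviour; these constraints pin down the admissible range of $Y$.
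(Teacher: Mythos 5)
Your first step---isolating the forced central zero, applying Hadamard/explicit-formula arguments to the quotient $\widetilde L_j$, and landing on $\log|L'(\oh,f_j\otimes\chi_d)|=\mathcal P_{f_j}(d;x)+\oh\log\log x+(\text{zero sum error})$ with the sign of the $\oh\log\log$ bias flipped---is exactly the paper's Proposition \ref{new-prop1}, and the method-of-moments plus Cram\'er--Wold treatment of the vector $\bigl(P_j(d;x)/\sqrt{\log\log|d|}\bigr)_j$ matches the paper's Proposition \ref{M-forms-moments} and Lemma \ref{lemma-G}. The Rankin--Selberg orthogonality and the admissible Fourier support $(-2,2)$ are both correctly identified. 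So far this is the paper's route.

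The gap is in the middle step, where you pass from the one-level density to the constant $C_M=1-\tfrac{M}{4}$. You form a ``good'' set $\mathcal G$ of density $\ge 1-\tfrac{M}{4}+o(1)$ \emph{inside all of} $\mathcal F$ and then intersect it with the Gaussian-constrained set $A=\{d:\mathcal Q_j(d;X)\in(\alpha_j,\beta_j)\ \forall j\}$ of density $\Psi_M+o(1)$. But the inclusion--exclusion (union-bound) estimate only gives
\[
\#(A\cap\mathcal G)\ \ge\ \#A-\#(\mathcal F\setminus\mathcal G)\ \ge\ \bigl(\Psi_M-\tfrac{M}{4}+o(1)\bigr)\,\#\mathcal F,
\]
which is strictly weaker than the claimed $(1-\tfrac{M}{4})\Psi_M\,\#\mathcal F$; indeed it is vacuous whenever $\Psi_M<\tfrac{M}{4}$, e.g.\ for short intervals $(\alpha_j,\beta_j)$ with $M=3$. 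To get the multiplicative bound $(1-\tfrac{M}{4})\Psi_M$ you must show the one-level-density bound on close-to-central zeros holds \emph{conditionally on} the event $d\in A$, i.e.\ that the smoothed zero-sum $\sum_{\gamma_{j,d}}h(\gamma_{j,d}L/2\pi)$ is asymptotically uncorrelated with the value of the Dirichlet polynomial vector. The paper achieves this by computing the \emph{weighted} moments
\[
\sum_{d\in\mathcal F(\kappa,a)}\mathcal P_{\mathbf a}(x;d)^k\sum_{j}\sum_{\gamma_{j,d}}h\!\Bigl(\tfrac{\gamma_{j,d}L}{2\pi}\Bigr)\Phi\!\Bigl(\tfrac{\kappa d}{X}\Bigr)
\]
(the second part of Proposition \ref{M-forms-moments}), which factor as (one-level density constant)$\,\times\,$(Gaussian moment). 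This factorisation is what yields equation \eqref{def-Psi_M}'s constant via the identity
\[
\sum_{\substack{d\in\mathcal F(\kappa,a)\cap\mathcal Z\\ \mathcal Q_j(d;X)\in(\alpha_j,\beta_j)\,\forall j}}\Phi\!\Bigl(\tfrac{\kappa d}{X}\Bigr)
\ \ge\ \Bigl(1-\tfrac{M}{2-\eta}+\tfrac{M}{4}+o(1)\Bigr)\sum_{\substack{d\in\mathcal F(\kappa,a)\\ \mathcal Q_j(d;X)\in(\alpha_j,\beta_j)\,\forall j}}\Phi\!\Bigl(\tfrac{\kappa d}{X}\Bigr),
\]
i.e.\ the $\tfrac{M}{4}$ exceptional fraction is measured \emph{within} $A$, not within $\mathcal F$. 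Your sketch never computes these cross-moments, so the constant $(1-\tfrac{M}{4})\Psi_M$ does not follow from what you have. To repair the argument you need to incorporate the joint distribution of the Dirichlet polynomial with the smoothed one-level density functional, which is the main analytic content of the paper's Propositions \ref{key-prop-2} and \ref{M-forms-moments}.
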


\noindent {\bf Remark.} Our theorem is inspired by the work of Petrow \cite[Theorem 2.2]{Pe}, who proved a mean value estimate for $L'(\oh,f_1 \otimes \chi_d)L'(\oh,f_2 \otimes \chi_d)$ under GRH. As remarked by Petrow, the results involving two newforms are particularly interesting since the asymptotics for the analogous moment as well as the lower bound towards Keating-Snaith's conjecture mentioned previously \emph{without} derivatives seems completely out of reach by the techniques developed to date. Also, as a  direct consequence, at least a quarter of central $L'$-values of triples of holomorphic modular forms are \emph{simultaneously} non-vanishing. Such a conditional non-vanishing result is somewhat surprising and seemingly hard to obtain via calculating the moments (as there are three degree-two $L$-functions involved).

Let $E^j$ be (pairwise) non-isogenous elliptic curves over $\Bbb{Q}$. By the modularity theorem, there are holomorphic cuspidal newforms $f_j=f_{E^{j}}$ such that $L(s,E_d^j) = L(s,f_j \otimes \chi_d)$, where the left is the $L$-function associated to the quadratic twist $E_d^j$ of $E^j$ by $\chi_d$. Recall that the famous theorem of Faltings implies that two elliptic curves over $\Bbb{Q}$ are isogenous if and only if they have the same $L$-factors at almost all primes  (see \cite{Fa}). Thus, by the strong multiplicity one theorem, we know that $f_j$ must be distinct when $E^j$ are (pairwise) non-isogenous elliptic curves. Consequently, we have the following corollary.

%Let $E_j$ be elliptic curves over $\Bbb{Q}$ that are  pairwise non-isogenous.
%(So for any $j\neq k$, $a_{E_j}(p) \neq a_{E_k}(p)$ for almost all $p$.)
%By the strong multiplicity one theorem and the Rankin-Selberg theory, we know that 
%$
%L(s,E_j\times E_k)
%$
%has a simple pole at $s=1$, if $j=k$, and is holomorphic at $s=1$ otherwise.

\begin{corollary}\label{EC-coro} Fix $M\in\{1,2,3\}$. In the same notation as above, for non-isogenous elliptic curves  $E^j/\Bbb{Q}$, set $\mathcal{F} =\cap_{j=1}^{M}\mathcal{F}_{f_j} $. Under GRH, for any fixed $\underline{\alpha}= (\alpha_1,\ldots,\alpha_M)$ and $\underline{\beta}=(\beta_1,\ldots,\beta_M)$,
\begin{align}\label{E-main-thm-eq}
 \begin{split}
&\#\bigg\{ d\in\mathcal{F}, X< |d|\le 2X :   \frac{\log |L'(\oh,E^j_d)| -\frac{1}{2} \log \log |d|}{\sqrt{ \log \log |d|}} \in (\alpha_j,\beta_j)\enspace \forall 1\le j\le M \bigg\}\\
&\ge C_M (\Psi_M(\underline{\alpha},\underline{\beta})  +o(1) )
\# \{ d \in\mathcal{F} :  X< |d|\le 2X\},
 \end{split}
\end{align}
as $ X \rightarrow \infty$, where $C_M$ and $\Psi_M$ are defined as in \eqref{def-Psi_M}.
\end{corollary}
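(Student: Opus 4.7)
The plan is to deduce Corollary \ref{EC-coro} directly from Theorem \ref{main-thm-M-forms} by translating the statement about elliptic curves into a statement about weight-$2$ holomorphic cuspidal newforms. The bridge is the modularity theorem of Wiles--Taylor--Breuil--Conrad--Diamond, which to each elliptic curve $E^j/\mathbb{Q}$ of conductor $N_j$ associates a holomorphic cuspidal newform $f_j = f_{E^j}$ of weight $k_j = 2$ on $\Gamma_0(N_j)$ with trivial central character, such that $L(s, E^j) = L(s, f_j)$ as Euler products. Since quadratic twisting commutes with this correspondence at the level of Dirichlet coefficients, one obtains $L(s, E^j_d) = L(s, f_j \otimes \chi_d)$, and in particular $L'(\tfrac12, E^j_d) = L'(\tfrac12, f_j \otimes \chi_d)$ for every fundamental discriminant $d$ coprime to $2N_j$.

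Next, I would verify that the hypothesis of Theorem \ref{main-thm-M-forms} that the newforms $f_j$ be distinct is satisfied. This is exactly the content of the discussion preceding the corollary: by Faltings' isogeny theorem, two elliptic curves over $\mathbb{Q}$ are isogenous if and only if their $L$-functions agree at almost all Euler factors; hence non-isogenous $E^j$ yield $L$-functions that differ at a positive density of primes, and strong multiplicity one for $\mathrm{GL}_2$ then forces the associated newforms $f_j$ to be distinct. The assumed GRH for all twisted $L$-functions $L(s, E^j \otimes \chi)$ translates via the same identification into GRH for $L(s, f_j \otimes \chi)$, precisely as required by Theorem \ref{main-thm-M-forms}.

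With these identifications in place, the set $\mathcal{F}_{f_j}$ defined in the introduction coincides with the set of fundamental discriminants $d$ with $(d, 2N_j) = 1$ and root number $-1$ for $L(s, E^j_d)$, so the set $\mathcal{F} = \bigcap_j \mathcal{F}_{f_j}$ in the corollary matches the one in the theorem. Substituting $L'(\tfrac12, E^j_d)$ for $L'(\tfrac12, f_j \otimes \chi_d)$ in the statement of Theorem \ref{main-thm-M-forms} then yields \eqref{E-main-thm-eq} verbatim, with the same constants $C_M$ and $\Psi_M(\underline{\alpha}, \underline{\beta})$ from \eqref{def-Psi_M}.

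There is no serious obstacle here; the corollary is a dictionary translation. The only subtlety worth flagging is to confirm that the coprimality conditions on $d$ (to $2N_j$ versus to $2 N_{E^j}$) match under modularity, which is immediate since the conductor of $E^j$ and the level of $f_j$ agree, and that the equality $L(s, E^j_d) = L(s, f_j \otimes \chi_d)$ persists with the correct Gamma factors so that derivatives at $s = \tfrac12$ likewise agree. Both points are standard and require no additional computation.
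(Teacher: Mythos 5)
Your proposal is correct and follows essentially the same route as the paper: modularity to pass from $E^j$ to weight-$2$ newforms $f_j$ with $L(s, E^j_d) = L(s, f_j\otimes\chi_d)$, then Faltings' isogeny theorem combined with strong multiplicity one to conclude that non-isogenous curves give distinct newforms, so that Theorem \ref{main-thm-M-forms} applies directly. The paper presents exactly this reasoning in the paragraph preceding the corollary; your added checks on coprimality, Gamma factors, and the GRH hypothesis are sensible but routine verifications that the paper treats as implicit.
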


It is worthwhile mentioning that when $M=1$ and $E=E^1$ (so $E_d = E^1_d$), the factor $C_1 =\frac{3}{4}$ appearing in \eqref{E-main-thm-eq} agrees with the positive proportion of non-vanishing $L'(\frac{1}{2},E_d)$ obtained by Heath-Brown \cite{H-B} under GRH. Also, this special instance (of odd orthogonal symmetry) can be seen as a complementary result for the recent work of {Radziwi\l\l} and Soundararajan \cite{RaSo} that proves a conditional lower bound (upon a factor of $\frac{1}{4}$) towards the conjecture of Keating and Snaith on the normality of $L(\frac{1}{2},E_d)$ over the even orthogonal symmetry family
\begin{equation}\label{def-E}
\mathcal{E} = \{d :\text{$d$ is a fundamental discriminant with $(d, 2N)$ = 1 and $\epsilon_E(d) = +1$} \},
\end{equation}
where $N=N_E$ is the conductor of $E$, and $\epsilon_E(d)$ is the root number of $L(s,E_d)$. 

We shall remark that all of these are rooted in Goldfeld's conjecture \cite{Go} predicting that $50\%$ of $L(s,E_d)$ have analytic rank zero and  $50\%$ of $L(s,E_d)$ have analytic rank one, which follows from the Birch and Swinnerton-Dyer conjecture (BSD) and the recent work of Smith \cite{Si} if $E$ has full rational $2$-torsion and no rational cyclic subgroup of order four. Unfortunately, as the approach of Smith is quite of an algebraic nature, it does not seem easy to combine it with the analytic principle of {Radziwi\l\l}-Soundararajan to obtain some further unconditional results.

In \cite[Conjecture 1]{RaSo}, {Radziwi\l\l} and Soundararajan further formulated a conjecture for the distribution of orders of Tate-Shafarevich groups $\Sha(E_d)$ of $E_d$ that as $d$ ranges over $\mathcal{E}$, the distribution of $\log(|\Sha(E_d)|/ \sqrt{|d|})$ is approximately Gaussian, with mean $\mu(E)\log \log |d|$ and variance $\sigma(E)^2 \log \log |d|$.  %Recently, a conditional lower bound was obtained by the present author in \cite{Wo}.
  As this manifests as the ``rank-zero'' case, one may ask whether an analogue for  the ``rank-one'' family (more precisely, replacing $\mathcal{E}$ by $\mathcal{F}$). With this in mind, we begin by recalling that the rank-one BSD asserts that if $L(\frac{1}{2},E_d)= 0$ and $L'(\frac{1}{2},E_d)\neq 0$, then 
$$
L'(\oh,E_d)= \frac{ |\Sha(E_d)| R(E_d) \Omega(E_d)\Tam(E_d)}{|E_d(\Bbb{Q})_{\mathrm{tors}}|^2},
$$
where $R(E_d)$ is the regulator of $E_d$, $|E_d(\Bbb{Q})_{\mathrm{tors}}|$ denotes the order of the rational torsion group of $E_d$, $\Omega(E_d)$ is the real period of a minimal model for $E_d$, and 
$
\Tam(E_d) =\prod_p T_p(d)
$  
is the product of the Tamagawa numbers. Additionally, when $L(\frac{1}{2},E_d)= 0$ and $L'(\frac{1}{2},E_d)\neq 0$, one can consider 
\begin{equation}\label{def-SEd}
S(E_d) = L'(\oh, E_d)  \frac{|E_d(\Bbb{Q})_{\mathrm{tors}}|^2}{R(E_d)\Omega(E_d)\Tam(E_d)}, 
\end{equation}
which is a more appropriate object for analytic study.

In the light of Keating-Snaith's conjecture, \cite[Conjecture 1]{RaSo}, and Corollary \ref{EC-coro}, we consider the following conjecture regarding  the joint distribution of central $L'$-values and orders of Tate-Shafarevich groups of $E_d$.

\begin{conj}\label{R1-conj}
Let $E/\Bbb{Q}$ be an elliptic curve given in Weierstrass form $y^2 = F(x)$ for a monic cubic integral polynomial $F$, and let $K$ denote the splitting field of $F$ over $\Bbb{Q}$. Let $c(g)$ be the integer so that $c(g)-1$ is the number of fixed points of $g\in \Gal(K/\Bbb{Q})$, and set
$$
\mu(E) =-\frac{1}{2} -\frac{1}{|G|}\sum_{g\in G} \log c(g) 
\quad\text{and}\quad
\sigma(E) =1 +\frac{1}{|G|}\sum_{g\in G} (\log c(g))^2. 
$$
Then the following hold.\\
\noindent (i) 
As $d$ ranges over $\mathcal{F}$, the joint distribution of $\log |L'(\oh,E_d)|$ and $\log(|\Sha(E_d)|R(E_d)/ \sqrt{|d|})$
 is approximately bivariate. More precisely, for any fixed $\underline{\alpha}= (\alpha_1,\alpha_2)$ and $\underline{\beta}=(\beta_1,\beta_2)$, as $X\rightarrow \infty$,
\begin{align*}
\begin{split}
\#\bigg\{ d\in\mathcal{F},
20 <|d| \le X :\, & \frac{ \log |L'(\oh,E_{d})| - \frac{1}{2}\log \log |d| }{\sqrt{ \log\log |d|}}\in (\alpha_1,\beta_1),\\
& \frac{\log (|\Sha(E_d)| R(E_d)/\sqrt{|d|} ) - (\mu(E)+1) \log\log |d|}{\sqrt{\sigma(E)^2 \log\log |d|}}
\in (\alpha_2,\beta_2)\bigg\} 
\end{split}
\end{align*}
is asymptotic to
$
(\Xi_E(\underline{\alpha},\underline{\beta}) +o(1)) \# \{ d \in\mathcal{F} : 20< |d| \le X \},  
$
where
\begin{equation}\label{def-XiE-KE}
\Xi_E(\underline{\alpha},\underline{\beta}) 
=\int_{(\alpha_1,\beta_1)\times(\alpha_2,\beta_2)} \frac{1}{2\pi \sqrt{\det(\mathfrak{K}_E)}}e^{-\frac{1}{2}{\bf v}^{\mathrm{T}}\mathfrak{K}_E^{-1}{\bf v}} d{\bf v}
\quad\text{with}\quad 
\mathfrak{K}_E 
=\begin{pmatrix} 
1 & \sigma(E)^{-1}  \\
\sigma(E)^{-1} & 1  \\
\end{pmatrix}. 
\end{equation}
\noindent (ii) Consequently, as $d$ ranges over $\mathcal{F}$, the distribution of $\log(|\Sha(E_d)| R(E_d)/ \sqrt{|d|})$
 is approximately Gaussian, with mean $(\mu(E)+1)\log \log |d|$ and variance $\sigma(E)^2 \log \log |d|$. 
\end{conj}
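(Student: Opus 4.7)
The plan is to derive both parts of Conjecture~\ref{R1-conj} from the rank-one BSD formula combined with a joint central limit theorem for $\big(\log|L'(\oh,E_d)|,\log\Tam(E_d)\big)$. Accepting rank-one BSD on the (conjecturally density-one) subset of $\mathcal{F}$ where $\mathrm{ord}_{s=1/2}L(s,E_d)=1$, and using that $\Omega(E_d)\sqrt{|d|}$ is essentially constant and $|E_d(\Bbb{Q})_{\mathrm{tors}}|$ is bounded by Mazur's theorem, \eqref{def-SEd} gives
$$
\log\!\Big(\tfrac{|\Sha(E_d)|R(E_d)}{\sqrt{|d|}}\Big)=\log|L'(\oh,E_d)|-\log\Tam(E_d)+O(1).
$$
Thus part (i) reduces to showing that $\big(\log|L'(\oh,E_d)|,-\log\Tam(E_d)\big)$ is asymptotically bivariate Gaussian with means $\big(\oh\log\log|d|,-\mu_T\log\log|d|\big)$ and \emph{diagonal} covariance $\log\log|d|\cdot\mathrm{diag}(1,V_T)$, where $\mu_T=\tfrac1{|G|}\sum_g\log c(g)$ and $V_T=\tfrac1{|G|}\sum_g(\log c(g))^2$; an affine change of variables recovers precisely $\mathfrak{K}_E$, and part (ii) follows by marginalising out the first coordinate.

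For the Tamagawa factor I would invoke Tate's algorithm: for each prime $p\mid d$ with $p\nmid 2N$, the twist $E_d$ has additive reduction of Kodaira type $I_0^{*}$ with $T_p(d)=c(g_p)$, where $g_p$ is the Frobenius class of $p$ in $\Gal(K/\Bbb{Q})$ acting on the roots of $F$ and $c(g)=1+\#\mathrm{Fix}(g)$; the three possible cycle types (identity, transposition, $3$-cycle) recover $T_p\in\{4,2,1\}$. Hence $\log\Tam(E_d)=\sum_{p\mid d,\,p\nmid 2N}\log c(g_p)+O(1)$ is an additive arithmetic function, and combining Chebotarev equidistribution with the weighted estimates $\sum_{p\le X}\tfrac{(\log c(g_p))^k}{p}\sim\tfrac1{|G|}\sum_g(\log c(g))^k\log\log X$ together with the usual Erd\H{o}s--Kac moment/cumulant machinery delivers the desired one-dimensional Gaussian limit for $\log\Tam(E_d)$ essentially unconditionally. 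The key remaining step is the asymptotic independence of $\log|L'(\oh,E_d)|$ and $\log\Tam(E_d)$: I would adapt the Radziwi\l\l--Soundararajan framework used in Theorem~\ref{main-thm-M-forms} to compute mixed joint moments of the short Dirichlet-polynomial proxy for $\log|L'(\oh,E_d)|$ (supported on primes $p\nmid d$) against $\sum_{p\mid d}\log c(g_p)$. Because these two prime sums live on essentially disjoint supports, character orthogonality in the average over $d\in\mathcal{F}\cap\{|d|\asymp X\}$ forces all mixed covariances to vanish in the limit, producing the block-diagonal covariance structure and hence the bivariate CLT.

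The principal obstacle is that the Radziwi\l\l--Soundararajan machinery currently produces only one-sided lower bounds --- witness the factor $C_M<1$ in Theorem~\ref{main-thm-M-forms} --- whereas the conjecture demands sharp asymptotics. The matching upper bound would require precise $2k$-th moment asymptotics for $L'(\oh,E_d)$ of Keating--Snaith strength in the odd-orthogonal regime, which is out of reach even under GRH (the unconditional second moment of \cite{KMS} and conditional second moment of \cite{Pe} are the only fully proven cases). A further, presently unavoidable, difficulty is the appeal to rank-one BSD in the reduction step and the need to know that $\mathrm{ord}_{s=1/2}L(s,E_d)=1$ for a density-one subset of $\mathcal{F}$.
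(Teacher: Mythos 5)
The statement you are attempting to prove is Conjecture~\ref{R1-conj}, which the paper poses but does \emph{not} prove; the strongest result the paper obtains in this direction is Theorem~\ref{main-thm}, a GRH-conditional one-sided lower bound with the factor $\tfrac{3}{4}$. Your write-up correctly recognises this and is best read as a heuristic derivation together with an honest list of the obstructions. As such, your reasoning aligns with the paper's own framework: the BSD reduction $\log(|\Sha(E_d)|R(E_d)/\sqrt{|d|})=\log|L'(\oh,E_d)|-\log\Tam(E_d)+O(1)$ using $\Omega(E_d)\asymp|d|^{-1/2}$ and Mazur; the Tate-algorithm/Chebotarev computation that makes $\log\Tam$ an additive function with mean $\mu_T\log\log|d|$ and variance $V_T\log\log|d|$, in agreement with \eqref{est-cp}--\eqref{est-cp2} via $\mu_T=-\mu(E)-\tfrac12$ and $V_T=\sigma(E)^2-1$; the asymptotic uncorrelatedness of the Dirichlet-polynomial proxy for $\log|L'|$ and the Tamagawa sum (this is precisely what Proposition~\ref{P-C-moments}'s variance $b^2+2bc+c^2\sigma(E)^2$ encodes); and the affine change of variables whose off-diagonal covariance is $\sigma(E)^{-1}$, recovering $\mathfrak{K}_E$. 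You also correctly identify the irreducible obstacle: the Radziwi\l\l--Soundararajan machinery, applied via one-level density, yields only a lower bound (the $C_M$ factor), and a genuine asymptotic would require odd-orthogonal $2k$-th moment asymptotics for $L'(\oh,E_d)$ of Keating--Snaith strength, which are not available even under GRH --- together with density-one rank-one BSD.

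Two minor remarks worth recording. First, the paper's definition $\sigma(E)=1+\tfrac1{|G|}\sum_g(\log c(g))^2$ is evidently a typo for $\sigma(E)^2$; this is forced by \eqref{est-cp2}, by the accompanying table of values, and by your $V_T=\sigma(E)^2-1$. Second, in the paper's execution the raw additive function $\log\Tam(E_d)$ is replaced by the truncated, centred proxy $\mathcal{C}(d;x)=\sum_{\log X\le p\le x}C_p(d)$, with the primes $p<\log X$ and $p>x$ discarded at a cost of $\ll X/\log\log\log X$ discriminants; your sketch works directly with the full additive function. The truncation is not cosmetic --- it is what makes the sieve parameter $v\le X^{\varepsilon}$ in Proposition~\ref{key-prop-2} admissible and the joint moment computation in Proposition~\ref{P-C-moments} converge --- so any rigorous version of your argument would need to incorporate it.
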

Denoting $n_K$ the degree of $K$, one has the following table of explicit values of $\mu(E)$ and  $  \sigma(E)^2$:
\begin{table*}[h!] 
\begin{tabular}{|c||c|c|c|c|} 
 \hline
 $n_K$ & $1$ & $2$ & $3$ & $6$    \\ 
  \hline
  $\mu(E)$ & $ -\frac{1}{2} -2 \log 2$ & $-\frac{1}{2}  -\frac{3}{2} \log 2$ & $-\frac{1}{2}  -\frac{2}{3} \log 2$ & $-\frac{1}{2}  -\frac{5}{6} \log 2$    \\ 
   \hline
  $  \sigma(E)^2 $ & $1 + 4(\log 2)^2$ & $1 + \frac{5}{2} (\log 2)^2$ & $1 + \frac{4}{3} (\log 2)^2$ & $1 + \frac{7}{6} (\log 2)^2$    \\ 
 \hline
\end{tabular}
\end{table*}

Our second main result is the following conditional lower bound towards Conjecture \ref{R1-conj}.

\begin{thm}\label{main-thm}
Assume GRH for the family of twisted $L$-functions $L(s,E \otimes \chi)$ with all Dirichlet characters $\chi$. Let $f_E$ be the newform corresponding to $E$, and let $\mathcal{F} = \mathcal{F}_{f_E}$. Then, for any fixed $ \underline{\alpha}=(\alpha_1,\alpha_2)$ and  $\underline{\beta}=(\beta_1,\beta_2)$,  as $ X \rightarrow \infty$, 
\begin{align*}
\begin{split}
\#\bigg\{ d\in\mathcal{F},
X< |d|\le 2X :\, & \frac{ \log |L'(\oh,E_{d})| - \frac{1}{2}\log \log |d| }{\sqrt{ \log\log |d|}}\in (\alpha_1,\beta_1),\\
& \frac{\log (|S(E_d)|R(E_d)/\sqrt{|d|} ) - (\mu(E)+1) \log\log |d|}{\sqrt{\sigma(E)^2 \log\log |d|}}
\in (\alpha_2,\beta_2)\bigg\} 
\end{split}
\end{align*}
is greater than or equal to
$$
\frac{3}{4} (\Xi_E(\underline{\alpha},\underline{\beta})+o(1) )
\# \{ d \in\mathcal{F} :  X< |d|\le 2X \}, 
$$
where $\Xi_E(\underline{\alpha},\underline{\beta})$ is defined as in \eqref{def-XiE-KE}. Furthermore, suppose that the Birch and Swinnerton-Dyer conjecture holds for elliptic curves with analytic rank one. Then the above assertion remains true with $S(E_d)$ being replaced by $|\Sha(E_d)|$.
\end{thm}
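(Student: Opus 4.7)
My plan is to reduce the bivariate statement to a joint central limit theorem for $\bigl(\log|L'(\oh,E_d)|,\log\Tam(E_d)\bigr)$ and then to import the mollifier framework underlying the $M=1$ case of Theorem~\ref{main-thm-M-forms}. As the first step, from \eqref{def-SEd},
$$
\log\!\bigl(|S(E_d)|R(E_d)/\sqrt{|d|}\bigr) = \log|L'(\oh,E_d)| - \log\Tam(E_d) - \log\!\bigl(\Omega(E_d)\sqrt{|d|}\bigr) + 2\log|E_d(\Q)_{\mathrm{tors}}|.
$$
Mazur's theorem bounds $|E_d(\Q)_{\mathrm{tors}}|$ uniformly, and the scaling $(x,y)\mapsto(dx,d^{3/2}y)$ relating $E$ to $E_d$ shows that $\Omega(E_d)\sqrt{|d|}$ equals $\Omega(E)$ up to a bounded factor depending only on the sign of $d$ and on model discrepancies. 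Hence the right-hand side equals $\log|L'(\oh,E_d)|-\log\Tam(E_d)+O_E(1)$, and writing $U=\log|L'(\oh,E_d)|$ and $V=\log\Tam(E_d)$, it suffices to control the joint distribution of $(U,\,U-V)$ over $d\in\mathcal{F}\cap(X,2X]$.

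Next I would analyse $V$. Standard local computation shows that for $p\mid d$ coprime to $2N$, the twist $E_d$ has additive reduction and the Tamagawa number $T_p(d)=c(\sigma_p)$, where $\sigma_p\in\Gal(K/\Q)$ is the Frobenius at $p$ acting on the roots of the defining cubic $F$; thus $V=\sum_{p\mid d,\,p\nmid 2N}\log c(\sigma_p)+O_E(1)$. Treating $\mathbf{1}_{p\mid d}$ as a Bernoulli variable of mean $\sim 1/p$ and $\log c(\sigma_p)$ as Chebotarev-equidistributed independently of which primes divide $d$, a Billingsley/Sath\'e-type argument yields an Erd\H{o}s--Kac theorem: $V$ is asymptotically Gaussian of mean $\mu_{\Tam}(E)\log\log|d|$ and variance $E_g[(\log c(g))^2]\log\log|d|=(\sigma(E)^2-1)\log\log|d|$, with $\mu_{\Tam}(E)=-\mu(E)-\oh$. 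Meanwhile, in the Radziwi\l\l--Soundararajan framework the Gaussian fluctuation of $U$ is carried by the character sum $\sum_{p\le x}\lambda_E(p)\chi_d(p)/\sqrt p$, supported on primes $p\nmid d$, whereas that of $V$ is supported on the primes $p\mid d$. Since the two supports are disjoint, a mixed-moment expansion combined with standard quadratic character-sum estimates forces $U$ and $V$ (after centring) to be asymptotically independent. Using $\Var(U-V)=\Var U+\Var V=\sigma(E)^2\log\log|d|$ and $\Cov(U,U-V)=\Var U=\log\log|d|$, one reads off that the joint $(U,U-V)$, after the stated normalisations, converges to the bivariate Gaussian with covariance matrix $\mathfrak K_E$, the off-diagonal correlation being $1/\sigma(E)$.

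The lower bound and the factor $\tfrac34$ are inherited from the proof of Theorem~\ref{main-thm-M-forms} with $M=1$: one convolves with a nonnegative Radziwi\l\l--Soundararajan mollifier detecting non-vanishing of $L'(\oh,E_d)$ whose first moment in the odd-orthogonal family is $\tfrac34+o(1)$, and since this mollifier is built out of primes $p\nmid d$ it is compatible with the independence step above. The final sentence of the theorem is then immediate: rank-one BSD forces $S(E_d)=|\Sha(E_d)|$ on every $d\in\mathcal{F}$ with $L'(\oh,E_d)\neq 0$, while the $d$ with $L'(\oh,E_d)=0$ are automatically excluded from the count (as $\log|L'|=-\infty$). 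The main obstacle will be making the asymptotic independence of $U$ and $V$ quantitatively sharp enough to survive insertion of the mollifier: one needs uniform bounds on mixed moments of the character sum $\sum_{p\le x}\lambda_E(p)\chi_d(p)/\sqrt p$ against the additive statistic $\sum_{p\mid d}\log c(\sigma_p)$ weighted by the Radziwi\l\l--Soundararajan mollifier, and this estimate is the technical heart of the argument.
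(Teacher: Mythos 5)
Your decomposition and covariance computation are essentially the paper's: writing $\log(|S(E_d)|R(E_d)/\sqrt{|d|})$ as $\log|L'(\oh,E_d)| - \log\Tam(E_d) + O_E(1)$ and then tracking the joint distribution of the first coordinate and its difference with the Tamagawa term is exactly what the paper does, with your pair $(U,\,U-V)$ being (after centering) the pair $(\mathcal{P}(d;x),\,\mathcal{P}(d;x)-\mathcal{C}(d;x))$ of Proposition~\ref{P-C-moments} and Lemma~\ref{lemma-H}. The asymptotic-independence-of-$U$-and-$V$ heuristic, the identification $\Var(U-V)=\sigma(E)^2\log\log|d|$ and correlation $1/\sigma(E)$, and the observation that rank-one BSD converts $S(E_d)$ into $|\Sha(E_d)|$ on the counted set, all match.

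The genuine misstep is in the provenance of the $\tfrac34$. The paper constructs no mollifier and never takes a mollified first moment. Instead it weights the count by the one-level-density statistic $\sum_{\gamma_d} h(\gamma_d L/2\pi)$ obtained from the explicit formula with the Fej\'er kernel and $L=(2-\eta)\log X$ (Proposition~\ref{key-prop-2}, used in the proofs of Lemmata~\ref{lemma-G} and~\ref{lemma-H}). In the odd-orthogonal family each $L(s,E_d)$ has a forced simple zero at $\oh$, and if $L'(\oh,E_d)=0$ it is a triple zero; since the one-level density gives roughly $\tfrac12 + \tfrac{1}{1-\eta/2}\to \tfrac32$ zeros on average in the short window and the extra ``bad'' zeros occur in conjugate pairs (or as a pair of extra zeros at $\oh$), at most $\tfrac14 + o(1)$ of the discriminants can carry them, leaving a proportion $\geq\tfrac34$. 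This is a zero-density argument, not a mollified-moment one. The technical ingredient you anticipate at the end --- mixed moments of $\mathcal{P}(d;x)$ and the Tamagawa statistic against the nonnegative weight --- is indeed the heart, but in the paper it is exactly equation~\eqref{P-C-h-moments} of Proposition~\ref{P-C-moments}, whose proof in turn needs the sieve parameter $v$ built into Proposition~\ref{key-prop-2} (the paper's refinement of [RaSo, Prop.~3]); your proposal does not explain how to obtain that weighted moment and, by invoking an unspecified mollifier, points in a direction the paper does not go. Once the weight is correctly identified as the explicit-formula kernel, the remainder of your plan would go through exactly as in the paper.
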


\noindent {\bf Remark.} 
 (i)  In comparison to \cite[Conjecture 1]{RaSo}, Conjecture \ref{R1-conj} predicts that  as $d$ ranges over $\mathcal{F}$, the distribution of $\log(|\Sha(E_d)| R(E_d)/ \sqrt{|d|})$
 is approximately Gaussian, with mean $ (\mu(E)+1)\log \log |d|$ and variance $\sigma(E)^2 \log \log |d|$. It seems hard to remove $R(E_d)$ from the conjecture as it has been shown in \cite[Proposition 2.2]{DR} that if $j(E)\neq 0,1728$, then one has $R(E_d) \gg_E \log |d|$. %On the other hand, our Theorem \ref{main-thm} is also closely related to the study of the moments of $S(E_d)R(E_d)$. % We refer the interested reader to \cite{De2,DR} to more detailed discussions about both $S(E_d)R(E_d)$ and $R(E_d)$.

\noindent (ii) Our proof of Theorems \ref{main-thm-M-forms} and \ref{main-thm} are based on the works of {Radziwi\l\l} and Soundararajan \cite{RaSo15,RaSo}, which led us to refine \cite[Proposition 3]{RaSo} as in Proposition \ref{key-prop-2}, which particularly gives us the flexibility to introduce a sieve parameter $v$ for $d$ (cf. \cite[Proposition 1]{RaSo15}). Also, we require Proposition \ref{new-prop1} to express central $L'$-values in terms of certain Dirichlet polynomials and zero sums of the $L$-functions in consideration.

\noindent (iii)  A key new input of this article is to invoke the Cram\'er-Wold device (see, e.g., \cite[Theorem 29.4]{Billingsley}), which allows us to derive the desired multivariate normal distributions by studying weighted moments of real combinations of associated Dirichlet polynomials. This idea was also used in the joint work \cite{HW} with Hsu, where the Cram\'er-Wold device was implemented inexplicitly to establish a ``log-independence'' among Dirichlet $L$-functions (over the critical line). We shall therefore give a detailed argument in the proofs of Lemmata \ref{lemma-G} and \ref{lemma-H}.

\noindent (iv) By adapting the method developed in \cite{RaSo15}, it may be possible to obtain unconditional upper bounds for Theorem \ref{main-thm-M-forms} (when $M=1$) and  Theorem \ref{main-thm} since the $(1+\varepsilon)$-th moments of the involving $L$-functions could be calculated.

\noindent (v)  Recently, under GRH, Bui, Evans, Lester, and Pratt \cite{BELP} proved analogues of Keating-Snaith's conjecture (for mollified central values of both $L(s,f_1\otimes\chi)$ and $L(s,f_2\otimes\chi)$ over Dirichlet characters $\chi$ modulo $q$) with a full asymptotic. As discussed in \cite[Sec. 1.4]{BELP}, they directly computed an asymptotic formula for the involving joint distribution, which requires a more complicated calculation in contrast to our argument.

The rest of this article is arranged as follows. Section \ref{notation} will collect preliminaries and results for proving Theorems \ref{main-thm-M-forms} and \ref{main-thm}. The proof of Theorem \ref{main-thm-M-forms} will be given in Section \ref{proof-of-main-thm}. Three key propositions, Propositions \ref{key-prop-2}, \ref{M-forms-moments}, and \ref{P-C-moments}  will be proved in Sections \ref{proof-key}, \ref{proof-2.2}, and \ref{pf-PC}, respectively. In Section \ref{rmkRS}, we will discuss an extension of \cite[Conjecture 1]{RaSo} in the same spirit of Conjecture \ref{R1-conj}.

\section{Notation and the key propositions}\label{notation}

Fix $M\in\Bbb{N}$. To extend the argument of \cite{RaSo}, we let $N_0$ denote the lcm of 8 and $N_{f_1},\ldots, N_{f_M}$. Set $\kappa=\pm 1$, and let $a\enspace\mymod{N_0}$ denote a residue class with $a \equiv 1$ or $5\enspace\mymod{8}$. In addition, we shall assume that $\kappa$ and $a$ are such that for any fundamental discriminant $d$ with sign $\kappa$, 
satisfying $d \equiv a\enspace \mymod{N_0}$, the root number $\epsilon_{f_j}(d)$ is $-1$ for every $1\le j\le M$. For these $\kappa$ and $a$, we set
$$
\mathcal{F} (\kappa, a) = \{d \in \mathcal{F} : \kappa d > 0,\enspace d \equiv a\enspace \mymod{N_0}\}
$$
so that $\mathcal{F} =\cap_{j=1}^{M}\mathcal{F}_{f_j} $ is the union of $\mathcal{F} (\kappa, a)$. Note that the imposed congruence condition on $d$ forces  $d \equiv 1\enspace \mymod{4}$, and thus $d\in \mathcal{F} (\kappa, a)$ must be square-free as $d$ is a fundamental discriminant. Moreover, we note that for $d\in \mathcal{F} (\kappa, a) $, the values 
$\chi_d(-1)$, $\chi_d(2)$, $\chi_d(p)$, with $p\mid N_0$, are fixed.

Let $h$ be an even smooth function, satisfying $|h(t)| \ll 1/(1+t^2)$ for all real $t$, and suppose that its  Fourier transform 
$
\widehat{h}(\xi)= \int_{\Bbb{R}} h(t) e^{-2\pi i\xi t}dt
$
is compactly supported. (Particularly, we can take the Fej\'er kernel
$
h(t) = ( \frac{\sin(\pi t)}{\pi t} )^2
$
with $\widehat{h}(\xi) =\max(1 - |\xi|, 0)$.) Also, we let $\Phi$ be a smooth non-negative function compactly supported in $[\frac{1}{2},\frac{5}{2}]$ such that $\Phi(t)=1$ on $[1,2]$.%, and we set $\widecheck{\Phi}(s) = \int_0^\infty \Phi(t) t^s dt$. 
Processing the argument of \cite[Proposition 1 and p. 1055]{RaSo15} verbatim then yields that for $n,v\in\Bbb{N}$ coprime to $N_0$, with $(n,v)=1$ and $v$ being square-free, such that $v\sqrt{n}\le X^{\frac{1}{2}-\varepsilon}$, one has
\begin{equation}\label{key-prop-RS-original}
\sum_{\substack{ d\in\mathcal{F}(\kappa,a)\\ v\mid d}}  \chi_d(n) \Phi\left(\frac{\kappa d}{X}\right)
= \delta(n=\square)
\frac{X}{vN_0} \prod_{p\mid n v} \left(1 + \frac{1}{p} \right)^{-1}
 \prod_{p\nmid N_0 } \left(1 - \frac{1}{p^2} \right) \widehat{\Phi}(0)
 + O(X^{\frac{1}{2}+\varepsilon}n^{\frac{1}{2}}),
\end{equation}
where $\delta(n=\square)=1$ if $n$ is a square, and $\delta(n=\square)=0$ otherwise. %(Note that in \cite[Proposition 1]{RaSo15}, this was stated with $\widecheck{\Phi}(0)$ in place of our $\widehat{\Phi}(0)$. By our choice of $\Phi$, these are the same as $\widecheck{\Phi}(s) = \int_0^\infty \Phi(t) t^s dt$.) 
Furthermore, we have the following generalisation of \cite[Proposition 2]{RaSo} for modular $L$-functions.

\begin{prop}\label{key-prop-2}
Let $f_j$ be newforms. Let $h$ be an even smooth function such that $h(t) \ll (1 + t^2)^{-1}$, and its Fourier transform is compactly supported in $[-1, 1]$. Let $L \ge 1$ be real, and let $\ell$ and $v$ be positive integers coprime to $N_0$ such that $v$ is square-free and $(\ell, v) = 1$.  Assume further that $e^{\frac{L}{4}}\ell^{\frac{1}{2}} \le X^{\frac{1}{2}-3\varepsilon}$
and $v\le X^{\varepsilon}$. Under GRH for all $ L(s, f_j \otimes \chi_d \otimes \chi)$ with $d\in \mathcal{F}(\kappa,a)$ and Dirichlet characters $\chi$ modulo $N_0$, we let  $\oh+ i\gamma_{j,d}$ range over the non-trivial zeros of $ L(s,f_j \otimes \chi_d)$, and set
$$
\mathcal{S}_{\kappa,a, v}=\sum_{j=1}^M \mathcal{S}_{j,\kappa,a,v}, \quad\text{with}\quad
\mathcal{S}_{j,\kappa,a} 
=  \sum_{\substack{ d\in\mathcal{F}(\kappa,a)\\ v\mid d}} \sum_{\gamma_{j,d}} h\left( \frac{\gamma_{j,d} L}{2\pi}\right)\chi_d(\ell) \Phi\left(\frac{\kappa d}{X}\right).
$$
If $\ell$ is not a square nor a prime times a square, then
$$
\mathcal{S}_{\kappa,a , v} \ll MX^{\frac{1}{2}+3\varepsilon}\ell^{\frac{1}{2}} e^{\frac{L}{4}}.
$$
If $\ell$ is a square, then
\begin{align*}
\mathcal{S}_{\kappa,a , v} &= 
\frac{MX}{v N_0} \prod_{p\mid \ell v  } \left(1 + \frac{1}{p} \right)^{-1}
 \prod_{p\nmid N_0 } \left(1 - \frac{1}{p^2} \right) \widehat{\Phi}(0)
 \left( \frac{2\log X}{L} \widehat{h}(0)  +\frac{h(0)}{2}+ O(L^{-1}) \right)  \\
&+O( MX^{\frac{1}{2}+3\varepsilon}\ell^{\frac{1}{2}} e^{\frac{L}{4}}).
\end{align*}
Lastly, if $\ell$ is a prime $q$ times a square, then
$$
\mathcal{S}_{\kappa,a, v} \ll 
\frac{MX}{ v LN_0} \frac{\log q}{\sqrt{q}}  \prod_{p\mid \ell v } \left(1 + \frac{1}{p} \right)^{-1}
+MX^{\frac{1}{2}+3\varepsilon}\ell^{\frac{1}{2}} e^{\frac{L}{4}}.
$$
\end{prop}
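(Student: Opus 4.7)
The plan is to combine the Riemann--von Mangoldt explicit formula (under GRH) for each $L(s, f_j\otimes \chi_d)$ with the twisted character-sum asymptotic \eqref{key-prop-RS-original}, and then sort the resulting prime-power sum into three cases according to the shape of $\ell$.

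First I apply the explicit formula to $L(s, f_j\otimes \chi_d)$ tested against $t\mapsto h(tL/(2\pi))$. Under GRH all non-trivial zeros lie on the critical line, which gives for each $d\in \mathcal F(\kappa,a)$
$$\sum_{\gamma_{j,d}} h\!\left(\frac{\gamma_{j,d}L}{2\pi}\right) = \frac{2\log |d|}{L}\widehat h(0) + \frac{h(0)}{2} + O\!\left(\frac{1}{L}\right) - \frac{2}{L}\sum_{n\ge 2}\frac{\Lambda_{f_j\otimes\chi_d}(n)}{\sqrt n}\widehat h\!\left(\frac{\log n}{L}\right),$$
where $\Lambda_{f_j\otimes\chi_d}(p^k)=(\alpha_{j,p}^k+\beta_{j,p}^k)\chi_d(p)^k\log p$ for $p\nmid N_j d$, and the archimedean coefficient reflects that the log-conductor of $f_j\otimes\chi_d$ is $2\log|d|+O(1)$. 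Since $\widehat h$ is supported in $[-1,1]$, the prime-power sum runs over $n\le e^L$.

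Multiplying by $\chi_d(\ell)\Phi(\kappa d/X)$ and summing over $d\in \mathcal F(\kappa,a)$ with $v\mid d$ reduces matters to evaluating the twisted moment
$$\mathsf{M}(m,v) := \sum_{\substack{d\in\mathcal F(\kappa,a)\\ v\mid d}} \chi_d(m)\Phi(\kappa d/X)$$
at $m=\ell$ and at $m=\ell p^k$ for each $p^k\le e^L$. On the support of $\Phi$ we have $\log|d|=\log X+O(1)$, so the $\log|d|$-factor in the archimedean coefficient may be replaced by $\log X$ with error absorbed into $O(L^{-1})$. The hypotheses $v\le X^\varepsilon$ and $e^{L/4}\ell^{1/2}\le X^{1/2-3\varepsilon}$ ensure that the applicability condition $v\sqrt m\le X^{1/2-\varepsilon}$ of \eqref{key-prop-RS-original} holds for every $m=\ell p^k$ in range, and by \eqref{key-prop-RS-original} the main term in $\mathsf M$ is present precisely when $m$ is a perfect square.

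I then split according to the squarefree part of $\ell$. If $\ell=r^2 s$ with $s$ squarefree and $\omega(s)\ge 2$, then no $\ell p^k$ can be a perfect square, since one prime $p$ cannot absorb two or more distinct squarefree factors of $s$; hence both $\mathsf M(\ell,v)$ and every $\mathsf M(\ell p^k, v)$ contribute only the error $O(X^{1/2+\varepsilon}(\ell p^k)^{1/2})$. If $\ell$ is a square, then $\mathsf M(\ell,v)$ produces the claimed main term via \eqref{key-prop-RS-original}, the prime-power pieces with $k$ even give only lower-order corrections absorbed into the $O(L^{-1})$ archimedean remainder, and those with $k$ odd yield only error. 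If $\ell=qr^2$ with $q$ prime, then $\ell p^k$ is a square precisely when $p=q$ and $k$ is odd; isolating $k=1$ gives the dominant $\frac{\log q}{\sqrt q}$ contribution, while higher odd $k\ge 3$ contribute geometrically smaller terms absorbed into the error. Summing over $j=1,\ldots,M$ produces the linear $M$-factor in every bound.

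The main obstacle is the careful control of the cumulative prime-power error. The trivial bound yields $X^{1/2+\varepsilon}\ell^{1/2}\cdot L^{-1}\sum_{p^k\le e^L}|\lambda_{f_j}(p^k)|\log p$, and refining this to the stated $O(X^{1/2+3\varepsilon}\ell^{1/2}e^{L/4})$ requires separating $k=1$ from $k\ge 2$ (the latter controlled by $\sum_{k\ge 2}\sum_{p^k\le e^L}\log p\ll e^{L/2}$), exploiting the $\widehat h$ cut-off, and performing a finer dyadic decomposition of the $p$-range calibrated to the $p^{k/2}$-factor coming from the error of \eqref{key-prop-RS-original}; auxiliary use of Rankin--Selberg via Cauchy--Schwarz for the $k=1$ sum gives the final sharpening. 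The standing hypothesis $e^{L/4}\ell^{1/2}\le X^{1/2-3\varepsilon}$ then guarantees the error is of strictly lower order than $X$, matching the size of the main terms.
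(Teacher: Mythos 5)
Your proposal correctly identifies the opening move (explicit formula, separating archimedean and prime-power contributions) and the correct combinatorial analysis of when $\ell p^k$ is a square. However, there is a fatal gap in how you handle the prime-power sum $S_2$. You propose applying the character-sum asymptotic \eqref{key-prop-RS-original} \emph{termwise} at $m = \ell p^k$ and then controlling the cumulative error by a finer decomposition plus Cauchy--Schwarz. This cannot work. The error term in \eqref{key-prop-RS-original} is $O(X^{1/2+\varepsilon} n^{1/2})$, so the $k=1$ piece of the error already contributes
$$
\frac{1}{L}\sum_{p\le e^L}\frac{\log p}{\sqrt p}\,X^{1/2+\varepsilon}(\ell p)^{1/2}
\asymp \frac{X^{1/2+\varepsilon}\ell^{1/2}}{L}\sum_{p\le e^L}\log p
\asymp \frac{X^{1/2+\varepsilon}\ell^{1/2}\,e^{L}}{L},
$$
and even the $k\ge 2$ piece is of size $\asymp X^{1/2+\varepsilon}\ell^{1/2}e^{L/2}/L$, both of which exceed the target $O(X^{1/2+3\varepsilon}\ell^{1/2}e^{L/4})$ by an exponential factor in $L$. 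In the eventual application $L = (2-\eta)\log X$, so $e^L/L$ is close to $X^{2}$ and the error swamps the main term of size $X$. No amount of Cauchy--Schwarz or Rankin--Selberg can repair this, because the loss is not in bounding $|\lambda_{f_j}(p)|$ but in the cumulative $\sum_{p\le e^L}\log p$ coming from the $n^{1/2}$-error, which is intrinsic to applying \eqref{key-prop-RS-original} independently at each $p$.

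The paper's proof avoids this by \emph{not} applying \eqref{key-prop-RS-original} to $S_2$ termwise. Instead it first sieves the square-free condition on $d$ with a double M\"obius sum, writes $d = k v\beta\alpha^2$ with a truncation $\alpha\le A$, and applies Poisson summation (in the sense of \cite[Lemma 7]{RaSo15}) to the $k$-sum. This replaces the $d$-sum by a sum over a dual variable $m$ weighted by Gauss sums $\tau_m(\ell n)$; for fixed $m\neq 0$ the evaluation of $G_m(\ell n)$ from \cite[Lemma 2.3]{So00} shows that only $n=p$ with $p\nmid\ell m$ (or $n\mid m^2$) survives, and the resulting sum over primes $p$ is a sum of $\Lambda_{\mathcal L}(p)/p$ twisted by a genuine Dirichlet character modulo $q\mid m v\beta$. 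It is here — and nowhere in your outline — that GRH for the twisted $L$-functions $L(s,f_j\otimes\chi)$ with Dirichlet characters $\chi$ is used, delivering the square-root cancellation in $p$ needed to make the $m\neq 0$ contribution $\ll M\ell e^{L/2}A X^{5\varepsilon}$; balancing against the $\alpha>A$ tail error $\ll MX\log X/(v^{1-\varepsilon}A)$ by choosing $A = X^{1/2-2\varepsilon}\ell^{-1/2}e^{-L/4}$ then produces precisely the stated error $O(X^{1/2+3\varepsilon}\ell^{1/2}e^{L/4})$. The diagonal $m=0$ Poisson term recovers the main terms in the square and prime-times-square cases, and the Rankin--Selberg asymptotic $\sum_{p\le y}\Lambda_{f_j}(p^2)/p = -\log y + O(1)$ is what produces the $h(0)/2$ secondary main term. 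You need to replace your termwise strategy for $S_2$ with this M\"obius/Poisson/Gauss-sum machinery and make the GRH input explicit.
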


We define
\begin{equation}\label{def-w}
\mathcal{P}_f(d; x) = \sum_{p\le x} \frac{\lambda_{f}(p) \chi_d(p)}{p^{\frac{1}{2}}} w(p),
\quad\text{with}\quad
w(p) = \frac{1}{p^{\frac{1}{\log x}}}   \frac{\log (x/p)}{\log x} .
\end{equation}
For $\mathbf{a} = (a_1,\ldots,a_M) \in\Bbb{R}^M$, we set
$
\mathcal{P}_{\mathbf{a}}(x;d) 
= \sum_{j=1}^M a_j\mathcal{P}_{f_j}(d; x).
%= \sum_{\substack{p\le x \\  p\nmid N_0}}  \frac{ (a_{1}\lambda_{f_1}(p)+ \cdots +a_{M}\lambda_{f_M} (p) )w(p) }{\sqrt{p}} \chi_d(p).
$

The proof of Theorem \ref{main-thm} relies on the ``method of moments'' asserting roughly that normal random variables are uniquely determined by their moments  (see, e.g. \cite{FS}), which led us to the following proposition.

\begin{prop}\label{M-forms-moments}
Let $k\in \Bbb{Z}^+$, and set $x= X^{1/ \log \log \log X}$. Then for $X$ sufficiently large, we have
\begin{equation}\label{M-P-moments}
\sum_{d\in\mathcal{F}(\kappa,a)}   \mathcal{P}_{\mathbf{a}}(x;d)^k\Phi\left(\frac{\kappa d}{X}\right)
=  \sum_{d\in\mathcal{F}(\kappa,a)} \Phi\left(\frac{\kappa d}{X}\right) \bigg( \sum_{j=1}^M a_j^2\log\log X\bigg)^{\frac{k}{2}}(M_k +o(1)),
\end{equation}
where $M_k$ is the $k$-th standard Gaussian moment, i.e. $M_k=  \frac{k!}{2^{k/2} (k/2)!}$ if $k$ is even, and $M_k=0$ otherwise.
Moreover, for any $L\ge 1$ such that $e^L\le X^2$, we have 
\begin{align}\label{M-P-h-moments}
 \begin{split}
& \sum_{d\in\mathcal{F}}  \mathcal{P}_{\mathbf{a}}(x;d)^k\sum_{j=1}^M \sum_{\gamma_{j,d}} h\left( \frac{\gamma_{j,d} L}{2\pi}\right)\Phi\left(\frac{\kappa d}{X}\right)\\
& =
\frac{ M X}{N_0}
 \prod_{p\nmid N_0 } \left(1 - \frac{1}{p^2} \right) \widehat{\Phi}(0)
 \left( \frac{2\log X}{L} \widehat{h}(0)  +\frac{h(0)}{2}+ O(L^{-1}) \right) \bigg( \sum_{j=1}^M a_j^2\log\log X\bigg)^{\frac{k}{2}}(M_k +o(1))\\
&+O_M(X^{\frac{1}{2}+\varepsilon} e^{\frac{L}{4}}),
  \end{split}
\end{align}
where the implied constant depends on $M$.
\end{prop}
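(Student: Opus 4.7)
The goal is to establish Gaussian moments of the real combination $\mathcal{P}_{\mathbf{a}}(x;d)$ needed to feed the method of moments / Cram\'er--Wold device. Both \eqref{M-P-moments} and \eqref{M-P-h-moments} are handled by the same expansion; only the character-sum input changes. First rewrite
$$
\mathcal{P}_{\mathbf{a}}(x;d) = \sum_{p \le x}\frac{A(p)\,\chi_d(p)}{\sqrt{p}}\,w(p), \qquad A(p):=\sum_{j=1}^M a_j\lambda_{f_j}(p),
$$
so that
$$
\mathcal{P}_{\mathbf{a}}(x;d)^k = \sum_{p_1,\ldots,p_k\le x} \frac{A(p_1)\cdots A(p_k)\,w(p_1)\cdots w(p_k)}{\sqrt{p_1\cdots p_k}}\,\chi_d(p_1\cdots p_k).
$$
Since $x = X^{1/\log\log\log X}$ and $k$ is fixed, the effective length $\ell:=p_1\cdots p_k \le x^k \le X^{\varepsilon}$, so the $X^{1/2+3\varepsilon}\ell^{1/2}e^{L/4}$ error terms (recall $e^L\le X^2$) are absorbable.

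For \eqref{M-P-moments}, swapping summation and applying \eqref{key-prop-RS-original} (with $v=1$) restricts the main contribution to tuples with $\ell$ a perfect square. The standard combinatorial analysis of Soundararajan (cf.\ \cite[\S3]{RaSo15} and \cite[proof of Proposition 3]{RaSo}) shows that the dominant contribution arises from the $M_k$ \emph{perfect pairings} of the $p_i$, each contributing $(\sum_{p\le x}A(p)^2 w(p)^2/p)^{k/2}$; configurations containing a prime of multiplicity $\ge 3$ are lower-order, controlled via Deligne's bound $|\lambda_{f_j}(p)|\le 2$ together with $\sum_p p^{-3/2} = O(1)$, and when $k$ is odd there is no pairing, giving the required $M_k=0$. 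Expanding $A(p)^2 = \sum_{i,j}a_i a_j\,\lambda_{f_i}(p)\lambda_{f_j}(p)$ and invoking the prime number theorem for the Rankin--Selberg $L$-functions $L(s,f_i\times f_j)$---which has a simple pole at $s=1$ iff $i=j$, by the distinctness of the newforms---yields
$$
\sum_{p\le x}\frac{\lambda_{f_i}(p)\lambda_{f_j}(p)}{p}w(p)^2 = \delta_{ij}\log\log X + O(1),
$$
hence $\sum_p A(p)^2 w(p)^2/p = (\sum_j a_j^2)\log\log X + O(1)$, which delivers the stated variance.

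For \eqref{M-P-h-moments}, the same expansion applies, with Proposition~\ref{key-prop-2} (and $v=1$) in place of \eqref{key-prop-RS-original}. Its three cases split the tuples: the \emph{square} case produces the announced main-term factor $\tfrac{MX}{N_0}\prod_{p\nmid N_0}(1-1/p^2)\widehat{\Phi}(0)\bigl(\tfrac{2\log X}{L}\widehat{h}(0)+\tfrac{h(0)}{2}+O(L^{-1})\bigr)$ multiplied by the same combinatorial weights as above, once the $\ell$-dependent Euler factor $\prod_{p\mid\ell}(1+1/p)^{-1}$ is absorbed into the $O(1)$ correction using $(1+1/q)^{-1} = 1 + O(1/q)$; the \emph{prime-times-square} case (one prime of odd multiplicity, others paired) contributes at most $O(XL^{-1}(\log\log X)^{(k-1)/2})$, by $\sum_q \log q/q^{3/2} = O(1)$; and the remaining tuples contribute only the error $O_M(X^{1/2+\varepsilon}e^{L/4})$.

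The principal technical obstacle is the uniform book-keeping of the ``fat-diagonal'' configurations (primes of multiplicity $\ge 3$) together with the clean detachment of the $\ell$-dependent Euler factor in Proposition~\ref{key-prop-2}, both handled by the standard Soundararajan mechanism. A secondary but conceptually important subtlety is the coupling across distinct newforms: the limiting variance $\sum_j a_j^2\log\log X$ (rather than the na\"ive $(\sum_j a_j)^2\log\log X$) emerges precisely because $L(s,f_i\times f_j)$ for $i\neq j$ is holomorphic at $s=1$, a fact that fails without the distinctness of the $f_j$.
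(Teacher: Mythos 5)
Your proposal follows the same expansion-plus-diagonal-combinatorics strategy as the paper's Section 5.1, and the structural skeleton is right: expand $\mathcal{P}_{\mathbf a}(x;d)^k$, apply \eqref{key-prop-RS-original} (resp.\ Proposition \ref{key-prop-2}) with $v=1$ term-by-term, extract the perfect-pairing contribution with weight $\frac{k!}{2^{k/2}(k/2)!}$, and use the Rankin--Selberg orthogonality $\sum_{p\le x}\lambda_{f_i}(p)\lambda_{f_j}(p)w(p)^2/p = \delta_{ij}\log\log x + O(1)$ to diagonalize the variance to $\sum_j a_j^2\log\log X$. Your remark that the cross terms $i\ne j$ vanish because $L(s,f_i\times f_j)$ is pole-free at $s=1$ precisely when the newforms are distinct is exactly the point, and the Euler-factor absorption via $(1+1/q)^{-1}=1+O(1/q)$ is how the paper proceeds.

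There is, however, a genuine slip in your treatment of the prime-times-square tuples in \eqref{M-P-h-moments}. You assert this contribution is $O(XL^{-1}(\log\log X)^{(k-1)/2})$ ``by $\sum_q \log q/q^{3/2}=O(1)$,'' but this is not the sum that actually arises. When the singled-out prime $q$ appears with multiplicity exactly one, the Dirichlet polynomial contributes $A(q)/\sqrt{q}$ (with $A(q)=\sum_j a_j\lambda_{f_j}(q)$), and the third part of Proposition \ref{key-prop-2} contributes $(\log q)/\sqrt{q}$, so the $q$-sum you must bound is
\[
\sum_{q\le x}\frac{A(q)\log q}{q},
\]
which is not absolutely convergent: trivially it is only $O(\log x)$. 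The paper gets $O(1)$ here by invoking the GRH estimate $\sum_{q\le t}(\log q)\lambda_{f_j}(q)\ll t^{1/2}\log^2(N_0t)$ and partial summation; alternatively the unconditional $O(\log x)$ would still suffice since $\log x = \log X/\log\log\log X = o(\log X)$ and the main term carries a factor $\log X/L$, but neither path is the one you wrote. (Your convergent-sum heuristic does cover the case of odd multiplicity $\ge 3$, where the power of $q$ in the denominator is at least $2$, and likewise your ``$\sum_p p^{-3/2}$'' for the fat diagonal in the square case should really be $\sum_p p^{-2}$ since even multiplicities $\ge 4$ are what occur, though the paper in fact bypasses isolating a convergent factor and just bounds the $r<k/2$ contribution by $(\log\log x)^{k/2-1}$.) This is easily repaired, but as written the justification for the prime-times-square bound fails for the multiplicity-one primes, which is the case that actually needs the oscillation of $\lambda_{f_j}$.
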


In the remaining part of this section, we let $E/\Bbb{Q}$ be an elliptic curve given by the model $y^2 = F(z)$, where $F$ is a monic cubic integral polynomial. Let $c(p)$ be the integer so that $c(p)-1 $ is the number of solutions of $F(z)\equiv 0\enspace\mymod{p}$. Recall that for $p$ not dividing the discriminant $\mathrm{disc}(F)$ of $F$, $T_p(d)= c(p)$ if $p\mid d$, and $T_p(d)=1$ otherwise.
%\begin{align*}
%T_p(d)
%=
%\begin{cases}
%c(p) &\mbox{if $p\mid d$;}\\
%1 &\mbox{if $p\nmid d$.}
%\end{cases}
%\end{align*}
%Moreover, in \cite[Lemma 4]{RaSo15}, it has been shown that
%\begin{equation}\label{est-cp}
%\sum_{p\le y} \frac{\log c(p) }{p} =\left(-\mu(E) -\oh\right)\log\log y +O(1)
%\end{equation}
%and
%\begin{equation}\label{est-cp2}
%\sum_{p\le y} \frac{(\log c(p))^2 }{p} =\left(\sigma(E)^2 -1\right)\log\log y +O(1).
%\end{equation}
Following \cite{RaSo15}, for $x> \log X > \max\{ N_0, |\mathrm{disc}(F)| \}$, we set
$$
\mathcal{C}(d;x)
=\sum_{\log X\le p \le x} C_{p}(d), \quad\text{with}\quad
C_{p}(d)
=
\log T_p(d)  -\frac{1}{p+1}\log c(p)
=
\begin{cases}
 \frac{p}{p+1}\log c(p) &\mbox{if $p\mid d$;}\\
 -\frac{1}{p+1}\log c(p) &\mbox{if $p\nmid d$.}
\end{cases}
$$
We also recall the following Mertens type estimates, involving $c(p)$, from \cite[Lemma 4]{RaSo15} (which is a consequence of the Chebotarev density theorem):
\begin{equation}\label{est-cp}
\sum_{p\le y} \frac{\log c(p) }{p} =\left(-\mu(E) -\oh\right)\log\log y +O(1)
\end{equation}
and
\begin{equation}\label{est-cp2}
\sum_{p\le y} \frac{(\log c(p))^2 }{p} =\left(\sigma(E)^2 -1\right)\log\log y +O(1).
\end{equation}
In addition, denoting $f=f_E$ the holomorphic cuspidal newform corresponding to $E$, we take $\mathcal{F}=\mathcal{F}_f$ and consider $\mathcal{P}(d; x) = \mathcal{P}_f(d; x)$. We have the following proposition regarding moments of real linear combinations of $\mathcal{P}(d; x)$ and $\mathcal{P}(d; x)-\mathcal{C}(d;x)$.

\begin{prop}\label{P-C-moments}
Let $k\in \Bbb{Z}^+$ be  fixed, and put $x= X^{1/ \log \log \log X}$.  Fix $b,c \in\Bbb{R}$  Then for $X$ sufficiently large, we have
\begin{align}\label{P-C-moments-1} 
 \begin{split}
&\sum_{d\in\mathcal{F}(\kappa,a)}  (b \mathcal{P}(d; x) + c( \mathcal{P}(d; x)- \mathcal{C}(d;x)))^k\Phi\left(\frac{\kappa d}{X}\right) \\
&=  \sum_{d\in\mathcal{F}(\kappa,a)} \Phi\left(\frac{\kappa d}{X}\right) \left( ( b^2 + 2bc + c^2\sigma(E)^2)\log\log X\right)^{\frac{k}{2}}(M_k +o(1)).
 \end{split}
\end{align}
Moreover, for any $L\ge 1$ such that $e^L\le X^{2-10\varepsilon}$, we have 
\begin{align}\label{P-C-h-moments}
 \begin{split}
& \sum_{d\in\mathcal{F}(\kappa,a)}  (b \mathcal{P}(d; x) + c( \mathcal{P}(d; x)- \mathcal{C}(d;x)))^k \sum_{\gamma_{d}} h\left( \frac{\gamma_{d} L}{2\pi}\right)\Phi\left(\frac{\kappa d}{X}\right)\\
& =
  \frac{X}{N_0}
 \prod_{p\nmid N_0 } \left(1 - \frac{1}{p^2} \right) \widehat{\Phi}(0)
 \left( \frac{2\log X}{L} \widehat{h}(0)  +\frac{h(0)}{2}+ O(L^{-1}) \right) \\
&\times\left(( b^2 + 2bc + c^2\sigma(E)^2)\log\log X\right)^{\frac{k}{2}}(M_k +o(1))+O(X^{\frac{1}{2}+\varepsilon} e^{\frac{L}{4}}),
  \end{split}
\end{align}
where the implied constants depend on $b,c$.
\end{prop}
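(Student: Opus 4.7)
The plan is to mimic the moment method used in \cite[Proposition 3]{RaSo} and in the proof of Proposition \ref{M-forms-moments}, but now tracking a genuinely two-variable moment. First, rewrite
\[
b\mathcal{P}(d;x) + c\bigl(\mathcal{P}(d;x)-\mathcal{C}(d;x)\bigr) = (b+c)\mathcal{P}(d;x) - c\,\mathcal{C}(d;x),
\]
and apply the binomial theorem to reduce \eqref{P-C-moments-1} to the evaluation of the joint moments
\[
\mathcal{M}_{j,k-j} \;=\; \sum_{d\in\mathcal{F}(\kappa,a)} \mathcal{P}(d;x)^{j}\,\mathcal{C}(d;x)^{k-j}\,\Phi\!\left(\tfrac{\kappa d}{X}\right), \qquad 0\le j\le k.
\]
For each such moment I would expand $\mathcal{P}(d;x)^j$ as an unrestricted sum over $j$-tuples of primes $(p_1,\ldots,p_j)\in[2,x]^j$ weighted by $\prod_i \lambda_f(p_i)w(p_i)p_i^{-1/2}$ and by $\chi_d(p_1\cdots p_j)$. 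Similarly, using the decomposition $C_p(d)= -\tfrac{\log c(p)}{p+1}+\log c(p)\cdot \mathbf{1}_{p\mid d}$, expand $\mathcal{C}(d;x)^{k-j}$ as a sum over $(k-j)$-tuples of primes from $[\log X,x]$, splitting into the constant and indicator parts.

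After the expansion, the $d$-sum of each resulting monomial has the shape $\sum_{d\in\mathcal{F}(\kappa,a),\,v\mid d}\chi_d(n)\Phi(\kappa d/X)$, with $n$ the product of the $\mathcal{P}$-primes and $v$ the squarefree product of those $\mathcal{C}$-primes whose $\mathbf{1}_{p\mid d}$ factor is retained. This is exactly the setting of \eqref{key-prop-RS-original}, and it delivers a main term only when $n$ is a square. Using $\lambda_f(p)^2 = 1+\lambda_f(p^2)$ and Rankin--Selberg, the pairings of the $\mathcal{P}$-primes contribute $M_j\,(\log\log X)^{j/2}$ along the diagonal (since $\sum_{p\le x}\lambda_f(p)^2 w(p)^2/p = \log\log X+O(1)$), while \eqref{est-cp2} shows that the self-contractions of the $\mathcal{C}$-sum contribute $M_{k-j}\,(\sigma(E)^2-1)^{(k-j)/2}(\log\log X)^{(k-j)/2}$. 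The crucial orthogonality step is showing that the cross terms factor: any monomial in which a $\mathcal{P}$-prime coincides with a $\mathcal{C}$-prime (the only source of genuine coupling, since $\chi_d(p)=0$ when $p\mid d$) carries an extra factor of $1/p$ and sums to $O(\log\log X)^{k/2-1}$, hence is swallowed by the error. Consequently,
\[
\mathcal{M}_{j,k-j} \;=\; (1+o(1))\,M_j\,M_{k-j}\,(\sigma(E)^2-1)^{(k-j)/2}(\log\log X)^{k/2}\!\!\sum_{d\in\mathcal{F}(\kappa,a)}\!\!\Phi\!\left(\tfrac{\kappa d}{X}\right).
\]

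Combining this with the binomial expansion identifies the right-hand side as the $k$-th moment of $(b+c)\xi - c\eta$, where $\xi$ and $\eta$ are independent Gaussians with $\xi\sim N(0,1)$ and $\eta\sim N(0,\sigma(E)^2-1)$; its variance is exactly $(b+c)^2 + c^2(\sigma(E)^2-1) = b^2+2bc+c^2\sigma(E)^2$, so the closed-form evaluation
\[
\sum_{j=0}^{k}\binom{k}{j}(b+c)^{j}(-c)^{k-j} M_j M_{k-j}(\sigma(E)^2-1)^{(k-j)/2} \;=\; M_k\,(b^2+2bc+c^2\sigma(E)^2)^{k/2}
\]
yields \eqref{P-C-moments-1}. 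For \eqref{P-C-h-moments}, I would carry out the same prime expansion but now apply Proposition \ref{key-prop-2} in place of \eqref{key-prop-RS-original}: the square case of Proposition \ref{key-prop-2} produces exactly the same diagonal count with the extra factor $\bigl(\tfrac{2\log X}{L}\widehat h(0)+\tfrac{h(0)}{2}+O(L^{-1})\bigr)$, while the ``prime times a square'' and generic cases contribute errors bounded uniformly in the $\mathcal{P}$/$\mathcal{C}$ expansion thanks to the hypothesis $e^L\le X^{2-10\varepsilon}$ and $x = X^{1/\log\log\log X}$.

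The main obstacle is the orthogonality step: carefully proving that $\mathcal{P}(d;x)$ and $\mathcal{C}(d;x)$ are uncorrelated to the required order across all mixed moments. The subtlety is that $\chi_d(p)=0$ whenever $p\mid d$, so the natural heuristic of ``independence'' is only approximately true; the argument must exploit the restriction $p\ge \log X$ in $\mathcal{C}$ together with the smoothing weights $w(p)$ to show that each collision of a $\mathcal{P}$-prime with a $\mathcal{C}$-prime costs a factor of $\log\log X$. A secondary technical issue is controlling the sheer number of error terms generated by the prime expansion (of order $(\pi(x))^k$), which needs the tight bounds in Proposition \ref{key-prop-2} together with the smallness of $e^{L/4}\ell^{1/2}$ relative to $X^{1/2-3\varepsilon}$.
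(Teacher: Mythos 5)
Your proposal follows essentially the same approach as the paper: rewrite the quantity as $(b+c)\mathcal{P}(d;x) - c\mathcal{C}(d;x)$, apply the binomial theorem, sieve the $\mathbf{1}_{p\mid d}$ conditions into the parameter $v$ so that Proposition~\ref{key-prop-2} (or \eqref{key-prop-RS-original}) applies, extract diagonals via Rankin--Selberg and \eqref{est-cp2}, and observe that the mixed moments factor so that the final combinatorial sum is the $k$-th moment of a one-dimensional Gaussian with variance $b^2+2bc+c^2\sigma(E)^2$. That is exactly the structure of the paper's argument for \eqref{P-C-h-moments}, and for \eqref{P-C-moments-1} the paper simply defers to \cite[Proposition~7]{RaSo15} with the footnote noting that the scalars $b,c$ insert a factor $c^j(b+c)^{k-j}$ and change the resulting power of $\log\log X$.

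The one genuinely different choice you make is bookkeeping for $\mathcal{C}$: you propose to split $C_p(d) = -\tfrac{\log c(p)}{p+1} + \log c(p)\,\mathbf{1}_{p\mid d}$ and expand the product over subsets of indices, whereas the paper (following \cite[pp.~1048--1050]{RaSo15}) groups by distinct primes $q_1<\cdots<q_u$ with multiplicities $m_i$ and packages the M\"obius-sieved contribution into the multiplicative function $G(q_1^{m_1}\cdots q_u^{m_u})$, exploiting $G(p)=0$ and $G(p^2) = (\log c(p))^2/p + O(p^{-2})$. Both approaches encode the same information, but the $G$-function route is cleaner when primes repeat (your subset expansion requires taking lcm's over repeated indices and regrouping, while $G$ absorbs all of that into one multiplicative evaluation). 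On the ``orthogonality'' step you flag as the main obstacle: the paper's mechanism is slightly sharper than your ``$1/p$ per collision'' heuristic. Since every $\mathcal{C}$-prime is $\ge\log X$, forbidding the $\mathcal{P}$-primes from dividing $v$ costs at most a factor $\ll (\log\log X)^k/\log X$, which beats the required $o\bigl((\log\log X)^{(k-j)/2}\bigr)$ with room to spare. Your proposal correctly anticipates that the restriction $p\ge\log X$ in $\mathcal{C}$ is the crucial input here, so I would regard the gap as filled, modulo the routine but fiddly bookkeeping.
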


\section{Proof of the main theorem}\label{proof-of-main-thm}

To prove Theorem \ref{main-thm-M-forms}, we start with the following proposition that gives an approximation for the central $L'$-values, which is similar to \cite[Proposition 1]{RaSo}. Our proof requires a modification of the one due to {Radziwi\l\l} and Soundararajan, and the key trick is to convert the derivative of an $L$-function to the one without derivative. As pointed out by Quanli Shen, such a trick of converting was also used by Kirila in \cite[Eq. (3.3)]{Ki} for studying discrete
moments of the derivative of the Riemann zeta function (over its non-trivial zeros).

\begin{prop}\label{new-prop1}
Let $d\in\mathcal{F}$, and let $ 3\le x \le |d|$. Assume
GRH for $L(s,f\otimes \chi_d)$, and let $\oh+ i\gamma_{d}$ run over 
the non-trivial zeros of $ L(s,f\otimes \chi_d)$.  If  $L'(\frac{1}{2},f\otimes \chi_d)$ is non-vanishing, then
\begin{align}\label{L'-exp}
\log | L'(\oh ,f\otimes \chi_d)|&=  \mathcal{P}_f(d; x) +\oh \log\log x +O(1)
+O\bigg( \frac{ \log |d|}{\log x}+\sum_{\gamma_d \neq 0} 
\log \bigg(1 + \frac{ 1 }{ (\gamma_d \log x)^2} \bigg)  \bigg),
\end{align}
where $\mathcal{P}_f(d; x)$ is defined as in \eqref{def-w}.
\end{prop}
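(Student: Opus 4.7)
The argument adapts the proof of \cite[Proposition 1]{RaSo} (which treats $\log|L(\oh,f\otimes\chi_d)|$ itself) by combining its contour-shift machinery with Kirila's trick \cite[Eq.\ (3.3)]{Ki} for converting $L_d'(\oh)$ into a non-derivative value of $L_d$ at a shifted point. Writing $L_d(s):=L(s,f\otimes\chi_d)$, the hypothesis $\epsilon_f(d)=-1$ forces $L_d(\oh)=0$, and the non-vanishing of $L_d'(\oh)$ makes this zero simple. Setting $\sigma_0:=\oh+\frac{1}{\log x}$, the Taylor expansion of $L_d$ around $\oh$ gives
\[
L_d(\sigma_0)=\frac{L_d'(\oh)}{\log x}\left(1+O\biggl(\Bigl|\frac{L_d''(\oh)}{L_d'(\oh)}\Bigr|\frac{1}{\log x}\biggr)\right),
\]
and taking absolute logarithms produces
\[
\log|L_d'(\oh)|=\log|L_d(\sigma_0)|+\log\log x+\mathcal{E}_{\mathrm{conv}}.
\]
To control $\mathcal{E}_{\mathrm{conv}}$, write $L_d(s)=(s-\oh)M_d(s)$ so that $L_d''(\oh)/L_d'(\oh)=2M_d'(\oh)/M_d(\oh)$, and expand the Hadamard product of $\Lambda(s,f\otimes\chi_d)$ to express this ratio as $\sum_{\rho\neq\oh}(1/(\oh-\rho)+1/\rho)+O(1)$. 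Under GRH, the zero-density estimate $\#\{\gamma_d:|\gamma_d-t|\le 1\}\ll\log(|d|(|t|+2))$ handles the zeros bounded away from $\oh$, yielding an $O(\log|d|/\log x)$ contribution to $\mathcal{E}_{\mathrm{conv}}$; the remaining contribution from zeros close to $\oh$ will be absorbed into the zero sum in the conclusion.

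Next, I would approximate $\log|L_d(\sigma_0)|$ following the contour-shift argument of \cite{RaSo} verbatim. Start from the Mellin--Perron identity
\[
\frac{1}{2\pi i\log x}\int_{(c)}\log L_d(\sigma_0+w)\frac{x^{w}}{w^{2}}\,dw=\sum_{n\le x}\frac{\Lambda_f(n)\chi_d(n)}{n^{\sigma_0}\log n}\frac{\log(x/n)}{\log x},
\]
where $\Lambda_f(p^{k})=(\alpha_p^{k}+\beta_p^{k})\log p$ is the modular analogue of the von Mangoldt function, and shift the contour to the left past $w=0$ (producing $\log L_d(\sigma_0)$) and past $w=\rho-\sigma_0$ at each non-trivial zero $\rho=\oh+i\gamma_d$ (producing the zero sum $\mathrm{ZS}$). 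Taking real parts and isolating the $n=p$ contribution extracts exactly $\mathcal{P}_f(d;x)$ as the prime-sum main term.

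The $n=p^{k}$ terms with $k\ge 3$ contribute $O(1)$, while the $k=2$ piece equals
\[
\tfrac12\sum_{p\le\sqrt{x}}\frac{\lambda_f(p)^{2}-2}{p^{2\sigma_0}}\chi_d(p)^{2}\frac{\log(x/p^{2})}{\log x}=-\oh\log\log x+O(1)
\]
by the Rankin--Selberg estimate $\sum_{p\le y}\lambda_f(p)^{2}/p=\log\log y+O(1)$, Mertens' theorem, and partial summation. Combining everything yields $\log|L_d'(\oh)|=\mathcal{P}_f(d;x)+\log\log x-\oh\log\log x+O(1)+\mathcal{E}_{\mathrm{conv}}+\mathrm{ZS}$, producing the advertised $+\oh\log\log x$. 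For the zero sum, each non-central zero $\rho=\oh+i\gamma_d$ in $\mathrm{ZS}$ contributes a term proportional to $\log|(\rho-\sigma_0)/(\rho-\oh)|=\oh\log(1+1/(\gamma_d\log x)^{2})$, so $\mathrm{ZS}$ is bounded by $O\bigl(\sum_{\gamma_d\neq 0}\log(1+1/(\gamma_d\log x)^{2})\bigr)$. The ``$\gamma_d\neq 0$'' restriction precisely reflects that the central zero has already been absorbed in the conversion step.

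The main obstacle is the Kirila conversion: a naive Taylor-remainder estimate is insufficient because $|L_d''(\oh)/L_d'(\oh)|$ can be as large as $(\log|d|)^{2}$. The resolution---isolating the central zero via $L_d(s)=(s-\oh)M_d(s)$ and expressing $M_d'/M_d(\oh)$ via the Hadamard factorisation, so that the ratio reduces to a controllable sum over non-central zeros under GRH---is the true content of the converting trick highlighted in the statement. One must also verify that the small-$\gamma_d$ contributions to $\mathcal{E}_{\mathrm{conv}}$ do not exceed the bound $\sum_{\gamma_d\neq 0}\log(1+1/(\gamma_d\log x)^{2})$ appearing in the conclusion, which is direct but delicate.
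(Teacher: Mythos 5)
Your architecture agrees with the paper's (convert $L_d'(\oh)$ to $L_d(\sigma_0)$ at $\sigma_0=\oh+\tfrac{1}{\log x}$, then expand $\log|L_d(\sigma_0)|$ via a Selberg-type Dirichlet polynomial identity), and your treatment of the prime-power sums — the $n=p$ piece giving $\mathcal{P}_f(d;x)$, the $p^2$ piece giving $-\oh\log\log x$ via Rankin--Selberg, the $k\ge3$ piece giving $O(1)$ — matches the paper. But the conversion step has a genuine gap. You frame it as a first-order Taylor expansion of $L_d$ at $\oh$, and in your final paragraph you notice that a naive remainder bound is insufficient; your proposed fix is to factor out the central zero, write $L_d(s)=(s-\oh)M_d(s)$, and control $M_d'/M_d(\oh)=\tfrac12 L_d''(\oh)/L_d'(\oh)$ as a sum over non-central zeros. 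That only controls the \emph{first} correction in the Taylor series of $\log M_d$. The higher-order derivatives of $\log M_d$ at $\oh$ are $\sum_{\gamma_d\neq0}(\rho-\oh)^{-k}$, so the $k$-th Taylor term is of size $\sum_{\gamma_d\neq0}(\gamma_d\log x)^{-k}$; when some $|\gamma_d|$ is much smaller than $1/\log x$, these terms blow up polynomially while the target error only allows a logarithmic penalty $\log\bigl(1+(\gamma_d\log x)^{-2}\bigr)$. Truncating the Taylor series at first order is therefore not permissible and the claimed bound on $\mathcal{E}_{\mathrm{conv}}$ does not follow from what you've written.

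The fix — and what the paper actually does — is to avoid the Taylor series entirely and instead evaluate the exact difference $\log|M_d(\oh)|-\log|M_d(\sigma_0)|=-\int_{\oh}^{\sigma_0}\Re\frac{M_d'}{M_d}(\sigma)\,d\sigma$ using the Hadamard formula for $\Re\frac{L_d'}{L_d}(\sigma+it)$, isolating the central zero by dividing out the factor $t$ and letting $t\to0$. This produces exactly $\frac{\log|d|+O(1)}{\log x}+\log\log x-\oh\sum_{\gamma_d\neq0}\log\bigl(1+\frac{1}{(\gamma_d\log x)^2}\bigr)$: the crucial ``saturated'' logarithm comes from integrating $\frac{\sigma-\oh}{(\sigma-\oh)^2+\gamma_d^2}$ over $[\oh,\sigma_0]$, not from any Taylor remainder estimate. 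Relatedly, you attribute the $\oh\log\bigl(1+(\gamma_d\log x)^{-2}\bigr)$ error to the contour-shift zero sum $\mathrm{ZS}$ in the expansion of $\log|L_d(\sigma_0)|$, but that sum involves integrals of the form $\frac{1}{\log x}\Re\int_{\sigma_0}^\infty \frac{x^{\rho-\sigma}}{(\rho-\sigma)^2}\,d\sigma$, which the paper bounds differently (by $\ll\log x$ per zero before dividing by $\log x$). The logarithmic saturation term actually belongs to the conversion step, and conflating the two zero sums obscures where the real difficulty lies.
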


%\pj{Note that if If  $L(\frac{1}{2},E_{d})\neq 0$, then the proof leads to
%\begin{align*}
%\log L(\oh ,f\otimes \chi_d) &=  \mathcal{P}_w(d; x) -\oh \log\log x +O(1)
%+O\bigg( \frac{ \log |d|}{\log x}+\sum_{\gamma_d } 
%\log \bigg(1 + \frac{ 1 }{ (\gamma_d \log x)^2} \bigg)  \bigg).
%\end{align*}}

\begin{proof}
First, from the logarithmic derivative of the Hadamard factorisation of $L(s,f\otimes\chi_d)$, it follows that
$$
-\Re \frac{L'}{L}(\sigma +it,f\otimes\chi_d)
 = \log |d| -
\sum_{\gamma_d}  \frac{\sigma- \frac{1}{2} }{(\sigma -\frac{1}{2})^2+ (t-\gamma_d)^2} +O_{k,N}(1),
$$
for $\frac{1}{2}\le\sigma\le 1$ and $|t|\le 1$. Integrating $\sigma$ from $\frac{1}{2}$ to $\sigma_0 =\frac{1}{2}+\frac{1}{\log x}$ then yields
\begin{align*}
&\log |L(\oh +it,f\otimes \chi_d)| -\log |L(\sigma_0 +it,f\otimes \chi_d)|\\
&= (\sigma_0 -\oh) (\log |d| +O(1) )
-\frac{1}{2}\sum_{\gamma_d} \log \frac{ (\sigma_0 -\frac{1}{2})^2 +(t-\gamma_d)^2  }{ (t-\gamma_d)^2}.
\end{align*}
By our assumption, $L(s,f\otimes \chi_d )$ has a simple zero at $s=\frac{1}{2}$ for $d\in\mathcal{F}$. Hence, by isolating such a zero, we can write
\begin{align*}
&\log \bigg|\frac{ L(\oh +it,f\otimes \chi_d)}{t}\bigg| -\log |L(\sigma_0 +it,f\otimes \chi_d)|\\
&= \frac{1}{\log x} (\log |d| +O(1) )
-\frac{1}{2}\log\bigg(\bigg(\frac{1}{\log x}\bigg)^2 +t^2  \bigg)
-\frac{1}{2}\sum_{\gamma_d \neq 0} \log \frac{ (\frac{1}{\log x})^2 +(t-\gamma_d)^2  }{ (t-\gamma_d)^2}.
\end{align*}
As $L(\frac{1}{2},f\otimes \chi_d )=0$, making $t\rightarrow 0$, we then arrive at
\begin{align}\label{log-L-prime}
 \begin{split}
&\log | L'(\oh ,f\otimes \chi_d)| -\log |L(\sigma_0 ,f\otimes \chi_d)|\\
&= \frac{1}{\log x} (\log |d| +O(1) )
+\log\log x
-\frac{1}{2}\sum_{\gamma_d \neq 0} 
\log \left(1 + \frac{ 1 }{ (\gamma_d \log x)^2} \right).
  \end{split}
\end{align}

On the other hand, similar to  \cite[Eq. (11)]{RaSo} and \cite[Lemma 14]{So} (\`a la Selberg),  for $\sigma\ge\frac{1}{2}$, one has
\begin{align*}
 -\frac{L'}{L}(\sigma,f\otimes\chi_d)
 &=\sum_{n\le x} \frac{\Lambda_f(n) \chi_d(n)}{n^{\sigma} } \frac{\log (x/n)}{\log x}
-\frac{1}{\log x}\left(\frac{L'}{L}\right)'(\sigma,f\otimes\chi_d)\\
&+\frac{1}{\log x}\sum_{\rho_d =\frac{1}{2}+i\gamma_d}  \frac{x^{\rho_d-\sigma}}{(\rho_d-\sigma)^2} 
+O\left( \frac{1}{x^{\sigma} \log x}\right), 
\end{align*}
for $x\ge 3$. If $L(\sigma_0 ,f\otimes \chi_d)\neq 0$, integrating the both sides from $\sigma_0$ to $\infty$, we obtain
\begin{align}\label{log-L-sigma0}
 \begin{split}
\log |L(\sigma_0 ,f\otimes \chi_d)|
& =
\sum_{n\le x} \frac{\Lambda_f(n) \chi_d(n)}{n^{\sigma_0} \log n} \frac{\log (x/n)}{\log x}
-\frac{1}{\log x}\Re \frac{L'}{L}(\sigma_0,f\otimes \chi_d)\\
&+\frac{1}{\log x}\sum_{\rho_d =\frac{1}{2}+i\gamma_d} \Re \int_{\sigma_0}^\infty \frac{x^{\rho_d-\sigma}}{(\rho_d-\sigma)^2}d\sigma 
+O\left( \frac{1}{ \sqrt{x} (\log x)^2}\right).
  \end{split}
\end{align}

For the first sum, the terms $n= p^k$ with $k\ge 3$ contribute at most $O(1)$. In addition, by the mean value theorem, there is $\xi_0\in (1,2\sigma_0)$ such that
$$
\sum_{p^2\le x} \frac{\Lambda_f(p^2) \chi_d(p^2)}{p \log p^2} \frac{\log (x/p^2)}{\log x}-
\sum_{p^2\le x} \frac{\Lambda_f(p^2) \chi_d(p^2)}{p^{1 +\frac{2}{\log x}} \log p^2} \frac{\log (x/p^2)}{\log x}
= \frac{1}{\log x}
\sum_{p^2\le x} \frac{\Lambda_f(p^2) \chi_d(p^2)}{p^{\xi_0} } \frac{\log (x/p^2)}{\log x},
$$
which is
$
\ll  \frac{1}{\log x}\sum_{p\le x} \frac{\log p}{p} \ll 1.
$
Therefore, by the estimate
$$
\sum_{ \substack{\p\le y\\ (p,N_0)=1}} \frac{\Lambda_f(p^2)}{p} 
=\sum_{ \substack{\p\le y\\ (p,N_0)=1}} \frac{(\lambda_f(p)^2-2)\log p }{p} 
=  -\log y +O(1),
$$
(which a direct consequence of the Rankin-Selberg theory; see, e.g., \cite[Ch. 5]{IK}),  and Abel's summation, the contribution of $p^2$ to the sum in \eqref{log-L-sigma0} is
$
-\oh \log\log x +O(1).
$
Secondly, it follows from \cite[Theorem 5.17]{IK} that
$$
\frac{L'}{L}(\sigma_0,f\otimes\chi_d)
\ll \frac{1}{2\sigma_0 -1}  (\log (k \sqrt{N}|d|))^{2-2\sigma_0  }  + \log\log (k \sqrt{N}|d|)
\ll_{k,N} \log x +\log\log |d|
$$
under GRH.
Thirdly, for $|\gamma_d\log x |\ge 1$, we know
$$
 \int_{\sigma_0}^{\infty} \frac{x^{\rho_d -\sigma}}{(\rho_d -\sigma)^2} d\sigma
\ll \frac{1}{\gamma_d^2}  \int_{\frac{1}{2}}^{\infty} x^{\frac{1}{2} -\sigma} d\sigma
\ll \frac{1}{\gamma_d^2 \log x},  
$$
which is $\le \log x$ for this range of $\gamma_d$. Also, in the two estimates above \cite[Eq. (15)]{RaSo}, for  $|\gamma_d\log x |\le 1$, one has
$$
\Re \int_{\frac{1}{2} +\frac{1}{\log x}}^{\infty} \frac{x^{\rho_d -\sigma}}{(\rho_d -\sigma)^2} d\sigma 
\ll   \int_{\frac{1}{2} +\frac{1}{\log x}}^{\infty} \frac{ x^{\frac{1}{2} -\sigma} }{(\frac{1}{2} -\sigma)^2} d\sigma \ll \log x.
$$

Thus, by \eqref{log-L-prime} and \eqref{log-L-sigma0}, (together with above discussion), recalling the definition \eqref{def-w}, 
we derive
\begin{align*}
\log | L'(\oh ,f\otimes \chi_d)|&=  \mathcal{P}_f(d; x) +\oh \log\log x +O(1)
+\frac{ \log |d|}{\log x} 
\\
&+ \frac{1}{\log x} (\log |d| +O(1) )
+\log\log x
+O\bigg(\sum_{\gamma_d \neq 0} 
\log \left(1 + \frac{ 1 }{ (\gamma_d \log x)^2} \right) \bigg),
\end{align*}
which concludes the proof.
\end{proof}

Now, by \eqref{def-SEd}, as $\Omega(E_d)\asymp 1/\sqrt{|d|}$, and $|E_d(\Bbb{Q})_{\mathrm{tors}}|$ is bounded, we can write
$$
\log |S(E_d)R(E_d)| = \log |L'(\oh, E_d)| + \log \sqrt{|d|} -\log \Tam(E_d) +O(1).
$$
Arguing as in \cite[pp. 1047-1048]{RaSo}, one can show that
\begin{align*}
&\sum_{\substack{ d\in\mathcal{F}\\ X/\log X \le |d| \le X}} 
|\log \Tam(E_d) +(\mu(E) +\oh)\log\log X -\mathcal{C}(d;x) |\\
& \ll X\log\log\log X +X \sum_{p< \log X} \frac{\log c(p)}{p}
+X \sum_{x<p\le  X} \frac{\log c(p)}{p}\\
&\ll X\log\log\log X +X \log\log\log\log X,
\end{align*}
where the last estimate follows from \eqref{est-cp}.
Thus, the number of $d \in\mathcal{F}$, with $X/\log X \le |d| \le X$, such that
\begin{equation}\label{bad-cond}
|\log \Tam(E_d) +(\mu(E) +\oh)\log\log X -\mathcal{C}(d;x) | \ge (\log\log\log X)^2
\end{equation}
is at most $\ll X/\log\log\log X$.
Therefore, the number of $d \in\mathcal{F}$ with $20 \le |d| \le X$ (and thus  $X \le |d| \le 2X$)  satisfying \eqref{bad-cond} is $\ll X/\log\log\log X$. Hence, upon dropping at most $\ll X/\log\log\log X$ $d\in \mathcal{F}$ with $X \le |d| \le 2X$, we have
$$
\log |S(E_d)R(E_d)| = \log |L'(\oh, E_d)| + \log \sqrt{|d|} +(\mu(E) +\oh)\log\log X -\mathcal{C}(d;x) +O((\log\log\log X)^2).
$$
This, together with Proposition \eqref{new-prop1}, yields
\begin{align}\label{key-expression-SEd}
 \begin{split}
&\log (|S(E_d)R(E_d)|/\sqrt{|d|} ) - (\mu(E) +1) \log\log |d|\\
& =   \mathcal{P}(d; x) - \mathcal{C}(d;x)
 +O\bigg( (\log\log\log X)^2 +
  \sum_{\gamma_{d}} \log \bigg(1+ \frac{1}{(\gamma_{d} \log x)^2} \bigg)  \bigg)
  \end{split}  
\end{align}
for all but at most $\ll X/\log\log\log X$  $d\in\mathcal{F}$ with $X\le |d|\le 2X$.

As may be noticed, the approximations \eqref{L'-exp} and \eqref{key-expression-SEd} are futile when there are zeros too close to $\frac{1}{2}$. We shall follow closely the idea of {Radziwi\l\l} and Soundararajan \cite[Lemma 1]{RaSo} to prove the following lemma in order to address this issue.

\begin{lemma}\label{lemma-G}
Let $\alpha_j<\beta_j$ be real numbers, and set $\underline{\alpha}= (\alpha_1,\ldots,\alpha_j)$ and $\underline{\beta}= (\beta_1,\ldots,\beta_j)$. Let $\mathcal{G}_{X}(\underline{\alpha},\underline{\beta};M)$ be the set of discriminants $d \in\mathcal{F}=\cap_{j=1}^{M}\mathcal{F}_{f_j}$, with  $X \le |d| \le 2X$, such that
$$
\mathcal{Q}_j(d;X) = \frac{\mathcal{P}_j(d; x)}{\sqrt{\log\log X}} \in (\alpha_j,\beta_j)\enspace \forall 1\le j \le M
$$
while each $ L(s,f_j\otimes\chi_d)$ has no zeros $\frac{1}{2}+ i\gamma_d$, with
$
0< |\gamma_d|  \le( (\log X) (\log\log X))^{-1},
$
and no double zeros at $s=\frac{1}{2}$. Then for any $\delta>0$, we have
$$
\mathcal{G}_{X}(\underline{\alpha},\underline{\beta};M)
\ge
\left(C_M-\delta\right)
( \Psi_M(\underline{\alpha},\underline{\beta}) +o(1) )
\# \{ d \in\mathcal{F} :  X \le |d| \le 2X \},
$$
where $C_M$ and $\Psi_M(\underline{\alpha},\underline{\beta})$ are defined as in \eqref{def-Psi_M}.
\end{lemma}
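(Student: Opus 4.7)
The plan is to combine the Cram\'er--Wold device (to lift univariate moment asymptotics to joint convergence) with a Radziwi\l{}\l--Soundararajan-style one-level-density argument, where the parity of the central zero multiplicity supplies the sharp factor $C_M=1-M/4$. The first stage is to establish joint asymptotic Gaussianity of $\mathbf{v}(d):=(\mathcal{Q}_1(d;X),\ldots,\mathcal{Q}_M(d;X))$. For every $\mathbf{a}=(a_1,\ldots,a_M)\in\R^M$, the moment formula \eqref{M-P-moments} shows that the $k$-th weighted moment of $\mathcal{P}_{\mathbf{a}}(x;d)/\sqrt{\sum_j a_j^2\log\log X}=(\sum_j a_j\mathcal{Q}_j(d;X))/\sqrt{\sum_j a_j^2}$ converges to $M_k$, so $\sum_j a_j\mathcal{Q}_j$ is asymptotically $N(0,\sum_j a_j^2)$ over $d\in\mathcal{F}(\kappa,a)$ weighted by $\Phi(\kappa d/X)$. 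By Cram\'er--Wold, $\mathbf{v}(d)$ converges jointly to a standard multivariate Gaussian; approximating the sharp cutoff $\mathbf{1}_{X\le|d|\le 2X}$ by $\Phi$ via a sandwich and summing over the finitely many valid classes $(\kappa,a)$ gives, with $I:=\prod_j(\alpha_j,\beta_j)$,
\[
\#\{d\in\mathcal{F}: X\le|d|\le 2X,\ \mathbf{v}(d)\in I\}=(\Psi_M(\underline\alpha,\underline\beta)+o(1))\,\#\{d\in\mathcal{F}: X\le|d|\le 2X\}.
\]

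The next stage couples $\mathcal{P}_{\mathbf{a}}^k$ with the zero-counting weights. Fix $L=2(1-6\varepsilon)\log X$---the largest scale permitted by Proposition \ref{key-prop-2}---and set $Z_j(d):=\sum_{\gamma_{j,d}}h(\gamma_{j,d}L/(2\pi))$. Replaying the proof of Proposition \ref{M-forms-moments} \eqref{M-P-h-moments} with only $f_j$ in the inner zero sum yields the mixed-moment identity $\mathbb{E}[\mathcal{P}_{\mathbf{a}}(x;d)^kZ_j(d)]=(\mathrm{avg}_j+o(1))(\sum_\ell a_\ell^2\log\log X)^{k/2}M_k$, where the expectation is with respect to the weight $\Phi(\kappa d/X)$ and
\[
\mathrm{avg}_j:=\tfrac{2\log X}{L}\widehat h(0)+\tfrac{h(0)}{2}+O(L^{-1})=\tfrac{3}{2}+O(\varepsilon).
\]
A polynomial approximation of $\mathbf{1}_{\mathbf{v}(d)\in I}$, justified by the joint Gaussian moment bounds of the previous stage, then delivers the ``independence-type'' identity
\begin{equation}\label{mixed-plan}
\sum_{d\in\mathcal{F}(\kappa,a)}Z_j(d)\,\mathbf{1}_{\mathbf{v}(d)\in I}\,\Phi(\kappa d/X)=(\mathrm{avg}_j+o(1))(\Psi_M(\underline\alpha,\underline\beta)+o(1))\,\#\mathcal{F}(\kappa,a)^*,
\end{equation}
where $\mathcal{F}(\kappa,a)^*:=\{d\in\mathcal{F}(\kappa,a):X\le|d|\le 2X\}$.

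The arithmetic key is that $\epsilon_{f_j}(d)=-1$ on $\mathcal{F}$ forces the multiplicity of the central zero of $L(s,f_j\otimes\chi_d)$ to be odd, so any multiple central zero has multiplicity $\ge 3$. Consequently, for any $d$ in the bad set $B_j:=\{d:L(s,f_j\otimes\chi_d)\text{ has a zero }\tfrac12+i\gamma_d\text{ with }0<|\gamma_d|\le((\log X)(\log\log X))^{-1},\text{ or a multiple central zero}\}$, one has $Z_j(d)\ge 3-o(1)$: either an extra conjugate pair $\tfrac12\pm i\gamma_d$ contributes $2h(\gamma_d L/(2\pi))\to 2h(0)=2$ on top of the forced central $h(0)=1$ (using $h$-continuity and $|\gamma_d|L/(2\pi)\ll 1/\log\log X$), or the central zero has multiplicity $\ge 3$, contributing $\ge 3h(0)=3$; meanwhile $Z_j(d)\ge h(0)=1$ for every $d\in\mathcal{F}(\kappa,a)$. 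Plugging these into \eqref{mixed-plan} gives
\[
(\tfrac{3}{2}+O(\varepsilon)+o(1))\Psi_M\,\#\mathcal{F}(\kappa,a)^*\ge 2\,\#(B_j\cap\{\mathbf{v}\in I\})+(\Psi_M+o(1))\,\#\mathcal{F}(\kappa,a)^*,
\]
so $\#(B_j\cap\{\mathbf{v}\in I\})\le(\tfrac14+O(\varepsilon))\Psi_M\,\#\mathcal{F}(\kappa,a)^*$. A union bound over $1\le j\le M$ then removes at most $(M/4+O(\varepsilon))\Psi_M\,\#\mathcal{F}(\kappa,a)^*$ from the count in the first stage; summing over $(\kappa,a)$ and taking $\varepsilon<\delta/M$ delivers the claim. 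The principal obstacle is producing \eqref{mixed-plan}, which requires both the individual-$j$ extraction from Proposition \ref{M-forms-moments} and a joint-Gaussian-compatible polynomial approximation of the indicator of the box $I$; the parity-based lower bound $Z_j\ge 3$ on $B_j$ (rather than the naive $\ge 2$) is the arithmetic content that produces the sharp factor $C_M=1-M/4$ instead of the weaker $1-M/2$.
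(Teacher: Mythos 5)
Your proposal is correct and rests on the same three pillars as the paper's argument: (i) Cram\'er--Wold applied to the moments of Proposition \ref{M-forms-moments} to get joint asymptotic normality of $(\mathcal{Q}_1,\ldots,\mathcal{Q}_M)$; (ii) a one-level-density sieve that couples the box indicator with a dilated Fej\'er-kernel zero sum at scale $L$ close to $2\log X$; and (iii) the parity observation that $\epsilon_{f_j}(d)=-1$ forces the central zero to have odd order, so any ``double'' central zero is actually of order $\ge 3$. The only substantive difference is organizational: you run the zero-count argument $M$ times, once per $f_j$, by isolating $Z_j(d)=\sum_{\gamma_{j,d}}h(\gamma_{j,d}L/(2\pi))$ and then taking a union bound over the bad sets $B_j$, whereas the paper works with the single combined zero sum over $\prod_{j=1}^{M}L(s,f_j\otimes\chi_d)$ (i.e. $\sum_j Z_j$), subtracts the $M$ mandatory central zeros to form the starred sum, and uses $\sum^{*}\ge 2-o(1)$ on the bad set in one shot. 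The two are equivalent up to reorganization (your per-$j$ bound $Z_j-1\ge 2\,\mathbf{1}_{B_j}-o(1)$ sums over $j$ to the paper's $\sum^{*}\ge 2$ on $\mathcal{F}\setminus\mathcal{Z}$), and both deliver exactly $C_M=1-M/4$. The one thing to flag is that your route requires an individual-$f_j$ variant of Propositions \ref{key-prop-2} and \ref{M-forms-moments}, i.e. the mixed-moment identity with $Z_j$ alone rather than $\sum_j Z_j$; the propositions as stated give only the combined version, though the individual one does follow verbatim from the same proof (the explicit formula and Rankin--Selberg input are additive in $j$). The paper's combined-sum formulation sidesteps this and uses the stated propositions directly, so it is the more economical of the two.
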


\begin{proof}
Let $\Phi$ be a smooth approximation to the indicator function of $[1, 2]$,
and let $\kappa$ and $a \enspace \mymod{N_0}$ be as in Section 2. It follows from \eqref{M-P-moments} and the method of moments (as in \cite{FS}) that for any $a_j\in \Bbb{R}$, we know that $\sum_{j=1}^M a_j \mathcal{Q}_j(d;X)$ converges to a normal random variable $\mathcal{Z}_{a_1,\ldots,  a_M}$, with mean 0 and variance $ \sum_{j=1}^M a_j^2$, in distribution, whose characteristic function is  
\begin{align}\label{char function trick}
\phi_{\mathcal{Z}_{a_1,\ldots, a_M}}(u) = \exp\left(-\oh \left( a_1^2 + \cdots + a_M^2 \right) u^{2}\right).
\end{align}
Furthermore, as the identity $M\times M$ matrix $I_M$ is positive definite, there exists a multivariate normal distribution $ (\mathcal{Q}_{1},\ldots, \mathcal{Q}_{M})$, with $\mathcal{Q}_{j}$ being standard normal, and the correlation between $\mathcal{Q}_{i}$ and  $\mathcal{Q}_{j}$ is $0$ when $i\neq j$ (see \cite[Theorem 2.1]{HH}). Hence, by \cite[Theorem 5.5.33]{Dudewicz}, a direct computation shows that   $\sum_{j=1}^M a_j\mathcal{Q}_{j}$ is a normal distribution, for any $a_j\in\Bbb{R}$, such that the characteristic function of $\sum_{j=1}^M a_j\mathcal{Q}_{j}$is the same as the characteristic function of $\mathcal{Z}_{a_1,\ldots,a_M}$ given in \eqref{char function trick}. Since the characteristic function uniquely determines the distribution (see \cite[Theorem 9.5.1]{Resnick}), we deduce that $\mathcal{Z}_{a_1,\ldots,a_M} = \sum_{j=1}^M a_j\mathcal{Q}_{j}$ 
in distribution, and thus  $\sum_{j=1}^M a_j\mathcal{Q}_{j}(d;X)$  converges to $\sum_{j=1}^M a_j\mathcal{Q}_{j}$ in distribution  for any real $a_j$. Therefore, the Cram\'er-Wold device implies that $(\mathcal{Q}_{1}(d;X),\ldots, \mathcal{Q}_{M}(d;X))$ converges to $(\mathcal{Q}_{1},\ldots, \mathcal{Q}_{M})$, in distribution, whose joint probability distribution is given by
$
\frac{1}{2\pi \sqrt{\text{det}(I_M)}}e^{-\frac{1}{2}{\bf v}^{\mathrm{T}}I_M^{-1}{\bf v}}.
$
To summarise, we have established that
\begin{equation}\label{1st-asym}
\sum_{\substack{d\in\mathcal{F}(\kappa,a)\\ \mathcal{Q}_i(d;X) \in (\alpha_i,\beta_i)\,\forall i }}  \Phi\left(\frac{\kappa d}{X}\right)
= ( \Psi_M(\underline{\alpha},\underline{\beta})  +o(1) ) \sum_{d\in\mathcal{F}(\kappa,a)} \Phi\left(\frac{\kappa d}{X}\right).
\end{equation}

 Similarly, by \eqref{M-P-h-moments},  choosing $h$ to be the Fej\'er kernel and $L = (2-\eta ) \log X$,  one can show that
\begin{align}\label{2nd-asym}
\begin{split}
\sum_{\substack{d\in\mathcal{F}(\kappa,a)\\ \mathcal{Q}_i(d;X) \in (\alpha_i,\beta_i)\,\forall i  }}  \sum_{\gamma_{d}} h\left( \frac{\gamma_{d} L}{2\pi}\right)  \Phi\left(\frac{\kappa d}{X}\right)
&= ( \Psi_E(\underline{\alpha},\underline{\beta})   +o(1) ) \sum_{d\in\mathcal{F}(\kappa,a)}  \sum_{\gamma_{d}} h\left( \frac{\gamma_{d} L}{2\pi}\right)  \Phi\left(\frac{\kappa d}{X}\right)\\
&=  M  ( \Psi_E(\underline{\alpha},\underline{\beta})   +o(1) )
 \left(\frac{1}{1-\frac{\eta}{2} } +\frac{1}{2} +o(1)\right)
 \sum_{d\in\mathcal{F}(\kappa,a)}    \Phi\left(\frac{\kappa d}{X}\right),
 \end{split}
\end{align}
where $\gamma_d$ are running over the imaginary part of the (non-trivial) zeros of $\prod_{j=1}^M L(s,f_j\otimes\chi_d)$. 

Note that the ``weight'' $\sum_{\gamma_{d}} h ( \frac{\gamma_{d} L}{2\pi} )$ is non-negative. Also, if $L(s,f_j\otimes\chi_d)$ has a zero $\frac{1}{2} +i\gamma_d$ with $0<|\gamma_{d}| \le ((\log X) (\log \log X))^{-1}$, then there exists a complex conjugate pair of such zeros. In addition, if $L'(\frac{1}{2},f_j\otimes\chi_d)=0$, then $L(s,f_j\otimes\chi_d)$ has triple zeros at $s=\frac{1}{2}$ as $\Lambda''(s,f_j\otimes\chi_d) = -\Lambda''(1-s,f_j\otimes\chi_d)$ for $d\in \mathcal{F}$. 

To proceed further, we let $\mathcal{Z}$ stand for the set of fundamental discriminants $d\in\mathcal{F}$ such that every $L(s,f_j\otimes \chi_d)$ has no zeros with  $0<|\gamma_{d}| \le ((\log X) (\log \log X))^{-1}$ and no double zeros at $s=\frac{1}{2}$. From \eqref{1st-asym} and \eqref{2nd-asym},
it then follows that
$$
  M \left(\frac{1}{1+\frac{\eta}{2} } +\frac{1}{2} +o(1)\right) 
 \sum_{\substack{d\in\mathcal{F}(\kappa,a)\\ \mathcal{Q}_i(d;X) \in (\alpha_i,\beta_i)\,\forall i }} \Phi\left(\frac{\kappa d}{X}\right)
   =\sum_{\substack{d\in\mathcal{F}(\kappa,a) 
  \\ \mathcal{Q}_i(d;X) \in (\alpha_i,\beta_i)\,\forall i  }}
   \sum_{\gamma_{d}} h\left( \frac{\gamma_{d} L}{2\pi}\right)  \Phi\left(\frac{\kappa d}{X}\right)
$$
can be written as
$$
\sum_{\substack{d\in\mathcal{F}(\kappa,a) 
  \\ \mathcal{Q}_i(d;X) \in (\alpha_i,\beta_i)\,\forall i  }}
 \Phi\left(\frac{\kappa d}{X}\right)
   +
\sum_{\substack{d\in\mathcal{F}(\kappa,a) 
  \\ \mathcal{Q}_i(d;X) \in (\alpha_i,\beta_i)\,\forall i  }}
   \sideset{}{^*}\sum_{\gamma_{d}} h\left( \frac{\gamma_{d} L}{2\pi}\right)  \Phi\left(\frac{\kappa d}{X}\right),
$$
where the stared sum is over zeros of $\prod_{j=1}^M L(s,f_j\otimes\chi_d)$ with multiplicity of the zeros at $s=\frac{1}{2}$ subtracted by $M$. Therefore, we derive
\begin{align*}
& \left(\frac{  M }{1-\frac{\eta}{2} } +\frac{  M }{2} - M  +o(1)\right) 
 \sum_{\substack{d\in\mathcal{F}(\kappa,a)\\ \mathcal{Q}_i(d;X) \in (\alpha_i,\beta_i)\,\forall i }} \Phi\left(\frac{\kappa d}{X}\right) \\
& \ge  0+ 
2   \sum_{\substack{d\in\mathcal{F}(\kappa,a)\backslash  \mathcal{Z} \\ \mathcal{Q}_i(d;X) \in (\alpha_i,\beta_i)\,\forall i }}  \Phi\left(\frac{\kappa d}{X}\right)
 =  2 \sum_{\substack{d\in\mathcal{F}(\kappa,a)  \\ \mathcal{Q}_i(d;X) \in (\alpha_i,\beta_i)\,\forall i  }}  \Phi\left(\frac{\kappa d}{X}\right)
- 2  \sum_{\substack{d\in\mathcal{F}(\kappa,a) \cap  \mathcal{Z} \\\mathcal{Q}_i(d;X) \in (\alpha_i,\beta_i)\,\forall i  }}  \Phi\left(\frac{\kappa d}{X}\right)
\end{align*}
as $\mathcal{F}(\kappa,a)$ is the disjoint union of $\mathcal{F}(\kappa,a)\cap  \mathcal{Z}$ and  $\mathcal{F}(\kappa,a)\backslash  \mathcal{Z} $. 
Therefore, we arrive at
\begin{align*}
  \sum_{\substack{d\in\mathcal{F}(\kappa,a) \cap  \mathcal{Z} \\ \mathcal{Q}_i(d;X) \in (\alpha_i,\beta_i)\,\forall i }}  \Phi\left(\frac{\kappa d}{X}\right)
\ge  
\left(1 - \frac{  M }{2-\eta } +\frac{  M }{4}  +o(1)\right)  \sum_{\substack{d\in\mathcal{F}(\kappa,a)  \\ \mathcal{Q}_i(d;X) \in (\alpha_i,\beta_i)\,\forall i }}   \Phi\left(\frac{\kappa d}{X}\right),
\end{align*}
which completes the proof upon summing over all possible pairs $(\kappa,a)$.
\end{proof}

%{\color{blue} in the case $L(\frac{1}{2},f\otimes\chi_d)$ and $L'(\frac{1}{2},g\otimes\chi_d)$, we have
%\begin{align*}
%  \sum_{\substack{d\in\mathcal{F}(\kappa,a) \cap  \mathcal{Z} \\ \mathcal{Q}_i(d;X) \in (\alpha_i,\beta_i)\,\forall i }}  \Phi\left(\frac{\kappa d}{X}\right)
%\ge  
%\left(1 - \frac{  M }{2-\eta } -\frac{2}{2}  +\frac{1}{2}  +o(1)\right)  \sum_{\substack{d\in\mathcal{F}(\kappa,a)  \\ \mathcal{Q}_i(d;X) \in (\alpha_i,\beta_i)\,\forall i }}   \Phi\left(\frac{\kappa d}{X}\right),
%\end{align*}
%which is not working QQ}

We also have the following lower bound for the joint distribution of $\mathcal{P}(d; x)$ and $\mathcal{P}(d; x)-\mathcal{C}(d;x)$.

\begin{lemma}\label{lemma-H}
Let $E/\Bbb{Q}$ be an elliptic curve. Let $\mathcal{P}(d; x)$, $\mathcal{C}(d; x)$, and $\mathcal{F}$ be as introduced above Proposition \ref{P-C-moments}. Let $\alpha_j<\beta_j$ be real numbers, and set $\underline{\alpha}= (\alpha_1,\alpha_2)$ and $\underline{\beta}= (\beta_1,\beta_2)$.  Define $\mathcal{H}_X(\underline{\alpha},\underline{\beta})$ to be the set of discriminants $d \in\mathcal{F}$, with  $X \le |d| \le 2X$, such that
$$
\mathcal{R}_1(d;X) = \frac{\mathcal{P}(d; x)}{\sqrt{\log\log X}} \in (\alpha_1,\beta_1) \quad\text{and}\quad
\mathcal{R}_2(d;X)=\frac{\mathcal{P}(d; x)-\mathcal{C}(d;x)}{\sqrt{\sigma(E)^2 \log\log X}} \in (\alpha_2,\beta_2), 
$$
while each $ L(s,f_j\otimes\chi_d)$ has no zeros $\frac{1}{2}+i\gamma_d$, with
$
0< |\gamma_d|  \le( (\log X) (\log\log X))^{-1},
$
and no double zeros at $s=\frac{1}{2}$. 
Then for any $\delta>0$, we have
$$
\mathcal{H}_X(\alpha,\beta) 
\ge
\left(\frac{1}{4}-\delta\right)
( \Xi_E(\underline{\alpha},\underline{\beta}) +o(1) )
\# \{ d \in\mathcal{F} :  X \le |d| \le 2X \},
$$
where $\Xi_E(\underline{\alpha},\underline{\beta})$ is defined as in \eqref{def-XiE-KE}.
\end{lemma}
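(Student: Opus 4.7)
The plan is to mirror the proof of Lemma \ref{lemma-G} with $M=1$, using Proposition \ref{P-C-moments} in place of Proposition \ref{M-forms-moments}. The new wrinkle is that $\mathcal{P}(d;x)$ and $\mathcal{P}(d;x)-\mathcal{C}(d;x)$ are correlated, so the bivariate Gaussian limit must be extracted via the Cram\'er--Wold device, precisely the point where Proposition \ref{P-C-moments} was tailored to supply moments of arbitrary real linear combinations.

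First I would fix $\kappa$ and $a \pmod{N_0}$ as in Section \ref{notation} and let $\Phi$ be a smooth approximation of $\mathbf{1}_{[1,2]}$. For any $u,v \in \R$, write $b = u/\sqrt{\log\log X}$ and $c = v/\sqrt{\sigma(E)^2 \log\log X}$, so that
$$
u\,\mathcal{R}_1(d;X) + v\,\mathcal{R}_2(d;X) \;=\; b\,\mathcal{P}(d;x) + c\bigl(\mathcal{P}(d;x)-\mathcal{C}(d;x)\bigr).
$$
Inserting this into \eqref{P-C-moments-1} and applying the method of moments shows that, weighted against $\Phi(\kappa d/X)$, this combination converges in distribution to a centered normal of variance
$$
\lim_{X\to\infty}(b^2+2bc+c^2\sigma(E)^2)\log\log X \;=\; u^2 + \tfrac{2uv}{\sigma(E)} + v^2,
$$
which is exactly the variance of $u\mathcal{Q}_1 + v\mathcal{Q}_2$ for the bivariate normal $(\mathcal{Q}_1,\mathcal{Q}_2)$ with covariance $\mathfrak{K}_E$. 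Since a Gaussian law is determined by its characteristic function, Cram\'er--Wold then yields joint convergence of $(\mathcal{R}_1(d;X),\mathcal{R}_2(d;X))$ to $(\mathcal{Q}_1,\mathcal{Q}_2)$, giving
$$
\sum_{\substack{d\in\mathcal{F}(\kappa,a) \\ \mathcal{R}_i(d;X)\in(\alpha_i,\beta_i)\,\forall i}} \Phi\!\left(\tfrac{\kappa d}{X}\right) \;=\; \bigl(\Xi_E(\underline{\alpha},\underline{\beta})+o(1)\bigr)\sum_{d\in\mathcal{F}(\kappa,a)} \Phi\!\left(\tfrac{\kappa d}{X}\right).
$$

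Next, with $h$ the Fej\'er kernel and $L=(2-\eta)\log X$ for small $\eta>0$ (so $e^L \le X^{2-10\varepsilon}$), I would rerun the same moments/Cram\'er--Wold argument based on \eqref{P-C-h-moments} in place of \eqref{P-C-moments-1}. This produces the analogous asymptotic for the $h$-weighted sum $\sum_{\gamma_d} h(\gamma_d L/(2\pi))\,\Phi(\kappa d/X)$, with an additional factor of $\tfrac{1}{1-\eta/2}+\tfrac12 + o(1)$.

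Finally I would apply the zero-separation argument from the end of the proof of Lemma \ref{lemma-G} with $M=1$: the guaranteed simple zero at $s=\oh$ contributes $h(0)=1$ per $d \in \mathcal{F}(\kappa,a)$, whereas each $d \notin \mathcal{Z}$ contributes at least $2(1+o(1))$ to the $h$-weight, coming either from a conjugate pair of zeros with $0<|\gamma_d| \le ((\log X)(\log\log X))^{-1}$ or from a (forced) triple zero at $\oh$ via $\Lambda''(s,f_E\otimes\chi_d) = -\Lambda''(1-s,f_E\otimes\chi_d)$ for $d \in \mathcal{F}$. Subtracting the simple-zero contribution, letting $\eta\to 0^+$, and summing over admissible $(\kappa,a)$ yields the stated lower bound. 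The only real obstacle is the Cram\'er--Wold step, since the cross-term $2bc\log\log X$ produces the non-trivial off-diagonal entry $\sigma(E)^{-1}$ of $\mathfrak{K}_E$; but because \eqref{P-C-moments-1} supplies the moments of \emph{every} real linear combination, the standard Gaussian uniqueness argument used for Lemma \ref{lemma-G} carries through unchanged.
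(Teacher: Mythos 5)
Your proposal mirrors the paper's own proof of Lemma~\ref{lemma-H} essentially exactly: the same reduction to Proposition~\ref{P-C-moments} for moments of real linear combinations of $\mathcal{P}$ and $\mathcal{P}-\mathcal{C}$, the same Cram\'er--Wold step to deduce convergence to the bivariate Gaussian with covariance $\mathfrak{K}_E$, and the same zero-separation argument carried over verbatim from Lemma~\ref{lemma-G} with $M=1$. One cosmetic remark: Proposition~\ref{P-C-moments} is stated for \emph{fixed} $b,c$, so rather than setting $b=u/\sqrt{\log\log X}$ and $c=v/\sqrt{\sigma(E)^2\log\log X}$ (which depend on $X$), the cleaner route is to take $b=u$ and $c=v/\sigma(E)$ fixed and divide the $k$th moment by $(\log\log X)^{k/2}$ afterwards --- by homogeneity this yields the same limiting variance $u^2+2uv\sigma(E)^{-1}+v^2$ and is exactly what the paper does; also note the argument in fact produces the constant $\tfrac34-\delta$ (consistent with $C_1=\tfrac34$ in Theorem~\ref{main-thm}), which is what is used in the application, the $\tfrac14-\delta$ in the lemma's display appearing to be a slip.
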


\begin{proof}
As the proof follows the same lines of the one of Lemma \ref{lemma-G}, we only emphasise here the main difference. It follows from \eqref{P-C-moments-1}  and the method of moment that for any $a_1,a_2\in \Bbb{R}$, $a_1 \mathcal{R}_1(d;X) +  a_2 \mathcal{R}_2(d;X)$  converges to a normal random variable $\mathcal{Z}_{a_1,a_2}$, with mean 0 and variance $ a_1^2 + 2a_1a_2 \sigma(E)^{-1} + a_2^2$, in distribution. In addition, the characteristic function of $\mathcal{Z}_{a_1,a_2}$ is 
$$
\phi_{\mathcal{Z}_{a_1,a_2}}(u) = \exp\left(-\oh \left( a_1^2 + 2 a_1a_2\sigma(E)^{-1} + a_2^2 \right) u^{2}\right).
$$

On the other hand, as  $0< \sigma(E)^{-1}<1$,  Sylvester's criterion (see, e.g., \cite{Sylvester's criterion}) tells us that the matrix
\begin{equation}\label{cov-mat-E}
\mathfrak{K}_E 
=\begin{pmatrix} 
1 & \sigma(E)^{-1}  \\
\sigma(E)^{-1} & 1  \\
\end{pmatrix} 
\end{equation}
is always positive definite. Hence, by \cite[Theorem 2.1]{HH},  there is a bivariate normal distribution $ (\mathcal{R}_{1}, \mathcal{R}_{2})$ such that each $\mathcal{R}_{i}$ is standard normal, and the correlation between $\mathcal{R}_{1}$ and  $\mathcal{R}_{2}$ equals $\sigma(E)^{-1}$. Therefore, for any $a_1,a_2\in\Bbb{R}$, $a_1\mathcal{R}_{1}+a_2 \mathcal{R}_{2}$ is a normal distribution. In addition, as $\Var(\mathcal{Q}_i)=1$ for each $i$, we have $\Cov(\mathcal{R}_{1}, \mathcal{R}_{2})= \sigma(E)^{-1}$, and thus 
\begin{align*}
\Var(a_1\mathcal{R}_{1}+a_2 \mathcal{R}_{2}) 
= a^{2}_{1}\Var (\mathcal{R}_{1}) + a^{2}_{2}\Var (\mathcal{R}_{2}) + 2a_{1}a_{2}\Cov(\mathcal{R}_{1}, \mathcal{R}_{2})
=  a_1^2 + 2 a_1a_2\sigma(E)^{-1} + a_2^2.
\end{align*}
From which, we verify that the characteristic function of $a_1\mathcal{R}_{1}+a_2 \mathcal{R}_{2}$ is the same as the one of $\mathcal{Z}_{a_1,a_2}$, so we derive that
$
\mathcal{Z}_{a_1,a_2} = a_1\mathcal{R}_{1}+a_2 \mathcal{R}_{2}
$
in distribution. Consequently, $a_1\mathcal{R}_{1}(d;X)+a_2 \mathcal{R}_{2}(d;X)$  converges to $a_1\mathcal{R}_{1}+a_2 \mathcal{R}_{2}$ in distribution  for any real $a_i$. Consequently, $(\mathcal{R}_{1}(d;X), \mathcal{R}_{2}(d;X))$  converges to $(\mathcal{R}_{1}, \mathcal{R}_{2})$, in distribution, whose joint probability distribution is 
$
\frac{1}{2\pi \sqrt{\text{det}(\mathfrak{K}_E)}}e^{-\frac{1}{2}{\bf v}^{\mathrm{T}}\mathfrak{K}_E^{-1}{\bf v}},
$
with $\mathfrak{K}_E$ defined in  \eqref{cov-mat-E}. In other words, we have shown that
$$
\sum_{\substack{d\in\mathcal{F}(\kappa,a)\\ \mathcal{R}_i(d;X) \in (\alpha_i,\beta_i)\,\forall i }}  \Phi\left(\frac{\kappa d}{X}\right)
= ( \Xi_E(\underline{\alpha},\underline{\beta})  +o(1) ) \sum_{d\in\mathcal{F}(\kappa,a)} \Phi\left(\frac{\kappa d}{X}\right).
$$
Moreover, by \eqref{P-C-h-moments} and an analogous argument as above, we have
\begin{align*}
\begin{split}
\sum_{\substack{d\in\mathcal{F}(\kappa,a)\\ \mathcal{R}_i(d;X) \in (\alpha_i,\beta_i)\,\forall i  }}  \sum_{\gamma_{d}} h\left( \frac{\gamma_{d} L}{2\pi}\right)  \Phi\left(\frac{\kappa d}{X}\right)
&= ( \Xi_E(\underline{\alpha},\underline{\beta})   +o(1) ) \sum_{d\in\mathcal{F}(\kappa,a)}  \sum_{\gamma_{d}} h\left( \frac{\gamma_{d} L}{2\pi}\right)  \Phi\left(\frac{\kappa d}{X}\right).
 \end{split}
\end{align*}
Hence, proceeding  with the same line of reasoning of proving Lemma \ref{lemma-G}, we conclude the proof.
\end{proof}

In addition, the proof of \cite[Lemma 2]{RaSo}, combined with Proposition \ref{M-forms-moments} (instead of \cite[Proposition 2]{RaSo}), yields the following lemma that allows us to control the contribution of zeros away from $\frac{1}{2}$.

\begin{lemma}\label{RS-lemma2}
The number of discriminants $d \in\mathcal{F}$, with $X \le |d| \le 2X$, such that
$$
\sum_{ |\gamma_{d}|  \ge( (\log X) (\log\log X))^{-1}  }     \log \bigg(1+ \frac{1}{(\gamma_{d} \log x)^2} \bigg)  
\ge  (\log \log\log X)^3,
$$
is $\ll_M X/\log \log\log X$. Here, the sum is over the zeros of $\prod_{j=1}^M L(s,f_j \otimes \chi_d)$.
\end{lemma}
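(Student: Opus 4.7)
The plan is to adapt \cite[Lemma 2]{RaSo} by substituting Proposition \ref{M-forms-moments} with $k=0$ for the single-form one-level density estimate used there. Set $T_0 := ((\log X)(\log\log X))^{-1}$. The main step is to bound the average
\[
\Sigma \,:=\, \sum_{d\in\mathcal{F}(\kappa,a)} \Phi\!\left(\tfrac{\kappa d}{X}\right) \sum_{|\gamma_d|\ge T_0} \log\!\left(1+\frac{1}{(\gamma_d\log x)^2}\right)
\]
by $O_M(X(\log\log\log X)^2)$, where $\gamma_d$ runs over imaginary parts of zeros of $\prod_{j=1}^M L(s,f_j\otimes \chi_d)$. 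Markov's inequality will then give the stated count $\ll_M X/\log\log\log X$ of exceptional $d$ in $\mathcal{F}(\kappa,a)$, and summing over the finitely many admissible pairs $(\kappa,a)$ will complete the argument.

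First, taking $h(t)=(\sin\pi t/\pi t)^2$ and $k=0$ in \eqref{M-P-h-moments} gives, for any $L\le (2-4\varepsilon)\log X$, the one-level density bound
\[
\sum_{d\in\mathcal{F}(\kappa,a)} \sum_{\gamma_d} h\!\left(\tfrac{\gamma_d L}{2\pi}\right) \Phi\!\left(\tfrac{\kappa d}{X}\right) \ll_M \frac{X\log X}{L}.
\]
Since $h\ge 0$ and $h\ge c_0>0$ on $[-1,1]$, this has two useful consequences: choosing $L=2\pi/T$ (valid whenever $T\ge \pi/\log X$) yields $\sum_d \#\{|\gamma_d|\le T\}\Phi(\kappa d/X)\ll_M XT\log X$; choosing $L=(2-4\varepsilon)\log X$ yields the uniform bound $\sum_d \#\{|\gamma_d|\le \pi/\log X\}\Phi(\kappa d/X)\ll_M X$.

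Next, decompose the range $|\gamma_d|\ge T_0$ into three sub-intervals and apply dyadic summation within each. On $|\gamma_d|\ge 1/\log x$, use $\log(1+1/y^2)\ll 1/y^2$ together with $T_n=2^n/\log x$; the resulting geometric sum totals $\ll_M X\log\log\log X$. On $\pi/\log X\le |\gamma_d|\le 1/\log x$, use $\log(1+1/y^2)\asymp 2|\log y|$ with $T_n=2^{-n}/\log x$ and the first consequence above; the convergent series $\sum_n n 2^{-n}$ again produces a total $\ll_M X\log\log\log X$. Finally, on the narrow range $T_0\le |\gamma_d|\le \pi/\log X$, use the uniform count $O(X)$ from the second consequence together with the weight $\log(1+1/(T_n\log x)^2)\ll n+\log\log\log\log X$ at $T_n=2^{-n}\pi/\log X$ (with at most $O(\log\log\log X)$ dyadic steps); the resulting arithmetic sum yields $\ll_M X(\log\log\log X)^2$.

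The main obstacle is exactly this last range. Because the admissibility constraint $L\le (2-4\varepsilon)\log X$ forbids a $T$-dependent zero-count below scale $\pi/\log X$, one must fall back on the uniform-in-$T$ estimate and absorb a $(\log\log\log X)^2$ loss. This loss is then comfortably swallowed by the cube in the threshold $(\log\log\log X)^3$, delivering the desired bound $\ll_M X/\log\log\log X$ on the exceptional set.
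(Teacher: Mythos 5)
Your proposal is correct and follows essentially the same route the paper intends, namely the argument of Radziwi\l\l--Soundararajan's Lemma 2: a Markov/first-moment bound for the weighted average $\Sigma$, established by a dyadic decomposition combined with the averaged one-level density coming from Proposition \ref{M-forms-moments} with $k=0$ (equivalently, Proposition \ref{key-prop-2} with $\ell=v=1$), then summed over the finitely many pairs $(\kappa,a)$. Two cosmetic points: since $h(\pm1)=0$ for the Fej\'er kernel you should invoke $h\gg 1$ on $[-\tfrac12,\tfrac12]$ rather than on $[-1,1]$ (a harmless factor-of-two rescaling of $L$), and for the widest dyadic scales where $\pi/T<1$ the proposition's hypothesis $L\ge 1$ fails, so one should fall back on the unconditional Riemann--von Mangoldt count for that negligible tail.
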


Finally, we are in a position to prove Theorem \ref{main-thm-M-forms}. (We shall, however, omit the proof of Theorem \ref{main-thm} since it is almost the same upon using \eqref{key-expression-SEd} and  Lemma \ref{lemma-H} instead.)

\begin{proof}[Proof of Theorem \ref{main-thm-M-forms}]
By Lemma \ref{lemma-G},  for $d\in \mathcal{G}_X(\underline{\alpha},\underline{\beta})$, we have
$
\mathcal{Q}_j(d;X) = \frac{\mathcal{P}_j(d; x)}{\sqrt{\log\log X}} \in (\alpha_j,\beta_j)$ for all $1\le j \le M
$
while each $ L(s,f_j\otimes\chi_d)$ has no zeros $\frac{1}{2}+\gamma_d$ with
$
 |\gamma_d|  \le( (\log X) (\log\log X))^{-1}.
$
Also, (for each $j$), Lemma \ref{RS-lemma2} allows us to remove $\ll  X/\log \log\log X$ $d$ from  $\mathcal{G}_X(\underline{\alpha},\underline{\beta})$ so that the
contribution of zeros  of $L(s,f_j\otimes \chi_d)$ with $|\gamma_{d}| \ge ((\log X) (\log\log X))^{-1} $ to the last sum in \eqref{L'-exp} with $f=f_j$ is bounded by $ (\log \log\log X)^3$. Hence, there are at least $$
\left(C_M-\delta\right)
( \Psi_M(\underline{\alpha},\underline{\beta}) +o(1) )
\# \{ d \in\mathcal{F} :  X \le |d| \le 2X \}
$$
fundamental discriminants $d\in\mathcal{F}$ with $X\le |d|\le 2X$ such that
$$
\frac{ \log | L'(\oh ,f_j\otimes \chi_d)| - \frac{1}{2}\log \log X }{\sqrt{ \log\log X}} = \frac{\mathcal{P}_j(d; x) }{\sqrt{ \log\log X}}
 +O\left(\frac{ (\log \log\log X)^3}{\sqrt{\log\log X}}\right)\in (\alpha_j,\beta_j)\enspace \forall 1\le j \le M,
$$
which yields the claimed lower bound. 
\end{proof}

\section{Proof of Proposition  \ref{key-prop-2}}\label{proof-key}

We start with the  explicit formula
\begin{align*}
\sum_{\gamma_{d}} h\left( \frac{\gamma_{d}}{2\pi}\right)
&=\frac{1}{2\pi}\int_{-\infty}^\infty h\left( \frac{t}{2\pi}\right)
\bigg(  \log \prod_{j=1}^M \frac{N_jd^2}{(2\pi)^2}  ++\sum_{j=1}^M \frac{\Gamma'}{\Gamma}(\frac{k_j}{2}+it) +  \frac{\Gamma'}{\Gamma}(\frac{k_j}{2}-it)\bigg) dt\\
&- \sum_{n=1}^\infty \frac{\Lambda_\mathcal{L}(n)}{\sqrt{n}} \chi_d(n)
\left(\widehat{h} (\log n ) +\widehat{h} (\log n) \right),
\end{align*}
where the first sum is over the zeros of $\prod_{j=1}^M L(s, f_j\otimes \chi_d)$,  and
$
\Lambda_\mathcal{L}(n)  =  \sum_{j=1}^M  \Lambda_{f_j}(n).
$
(Here, $\Lambda_{f_j}(n)$ are the coefficients of 
$
\frac{L'}{L} (s,f_j)=  \sum_{n=1}^\infty \Lambda_{f_j}(n)n^{-s}
$
such that $|\Lambda_{f_j}(n)|\le 2\Lambda(n)$; particularly,  $\Lambda_{f_j}(n)$ is only supported on prime powers.) Using this explicit formula (with dilated function $h_L(x) = h(xL)$ so that $\widehat{h_L}(\xi) = \frac{1}{L} \widehat{h}(\frac{\xi}{L})$, we obtain
\begin{align*}
\sum_{\gamma_{d}} h\left( \frac{\gamma_{d} L}{2\pi}\right)
&=\frac{1}{2\pi}\int_{-\infty}^\infty h\left( \frac{t L}{2\pi}\right)
\bigg(  \log \prod_{j=1}^M \frac{N_jd^2}{(2\pi)^2}  +\sum_{j=1}^M \frac{\Gamma'}{\Gamma}(\frac{k_j}{2}+it) +  \frac{\Gamma'}{\Gamma}(\frac{k_j}{2}-it)\bigg) dt\\
&-\frac{1}{L} \sum_{n=1}^\infty \frac{\Lambda_\mathcal{L}(n)}{\sqrt{n}} \chi_d(n)
\left(\widehat{h} \left(\frac{\log n}{L} \right) +\widehat{h} \left(-\frac{\log n}{L} \right) \right).
\end{align*}
This allows us to write
$
\mathcal{S}_{\kappa,a, v}  =S_1 -S_2,
$
where
$$
S_1 = \frac{1}{2\pi}\int_{-\infty}^\infty h\left( \frac{t L}{2\pi}\right)
\sum_{\substack{ d\in\mathcal{F}(\kappa,a)\\ v\mid d}}
\bigg(  \log \prod_{j=1}^M \frac{N_jd^2}{(2\pi)^2}  +\sum_{j=1}^M \frac{\Gamma'}{\Gamma}(\frac{k_j}{2}+it) +  \frac{\Gamma'}{\Gamma}(\frac{k_j}{2}-it)\bigg) \chi_d(\ell) \Phi\left(\frac{\kappa d}{X}\right) dt
$$
and
$$
S_2 
=  \frac{1}{L} \sum_{n=1}^\infty \frac{\Lambda_\mathcal{L}(n)}{\sqrt{n}} 
\left(\widehat{h} \left(\frac{\log n}{L} \right) +\widehat{h} \left(-\frac{\log n}{L} \right) \right) \sum_{\substack{ d\in\mathcal{F}(\kappa,a)\\ v\mid d}} \chi_d(\ell n) \Phi\left(\frac{\kappa d}{X}\right).
$$

Similar to \cite{RaSo},  by \eqref{key-prop-RS-original}, it can be shown that
\begin{equation*}
S_1
= \delta(\ell=\square)
\frac{MX}{vN_0} \prod_{p\mid \ell v} \left(1 + \frac{1}{p} \right)^{-1}
 \prod_{p\nmid N_0 } \left(1 - \frac{1}{p^2} \right) \widehat{\Phi}(0)
 (2\log X +O(1))\frac{\widehat{h}(0)}{L}
 + O(X^{\frac{1}{2}+\varepsilon}\ell^{\frac{1}{2}}),
\end{equation*}
where the $O(1)$-term depends on the weights $k_j$ of $f_j$. (Indeed, the evaluation for $S_1$ only differs from \cite{RaSo} in the appearance of $k_j$. More precisely, as $h$ is even, we have
\begin{align*}
\int_{-\infty}^{\infty}\frac{\Gamma'}{\Gamma}(\frac{k_j}{2}\pm it)  h\left( \frac{t L}{2\pi}\right)dt
&=\frac{2\pi}{L}\int_{-\infty}^{\infty}\frac{\Gamma'}{\Gamma}\left(\frac{k_j}{2}\pm i \frac{ 2\pi t}{L}\right) h(t)dt \\
&=\frac{4\pi}{L}\int_{0}^{\infty}\left(\frac{\Gamma'}{\Gamma}\left(\frac{k_j}{2}+i\frac{2\pi t}{L}\right)+\frac{\Gamma'}{\Gamma}\left(\frac{k_j}{2}-i\frac{2\pi t}{L}\right)\right)h(t)dt,
\end{align*}
which is
$$
\int_{0}^{\infty}\left( 2\frac{\Gamma'}{\Gamma}\left(\frac{k_j}{2}\right)
+O\left(\frac{ t^2}{ (k_j L)^2}\right)\right)h(t) dt 
  \ll_{k_j} \widehat{h}(0) 
$$
since $ 
\frac{\Gamma'}{\Gamma}(a+bi)+\frac{\Gamma'}{\Gamma}(a-bi)=2\frac{\Gamma'}{\Gamma}(a)+O(a^{-2}b^2)$ for  $a>0$ and $b\in\mathbb{R}$.)

%On the other hand, bounding $S_2$ goes verbatim upon replacing $\Lambda_E(n)$ in \cite[Sec. 4]{Wo} by $\Lambda_\mathcal{L}(n)$ and tracing the $M$-dependence. Since the argument only further requires the bound $|\Lambda_\mathcal{L}(n)|\le 2M \Lambda(n)$  and the estimate
%$$
%\sum_{ \substack{\p\le y\\ (p,N_0)=1}} \frac{\Lambda_{\mathcal{L}}(p^2)}{p} 
%= \sum_{j=1}^M\sum_{ \substack{\p\le y\\ (p,N_0)=1}} \frac{\Lambda_{f_j}(p^2)}{p} =M( -\log y +O(1))
%$$
%(in the place of the last displayed equation in \cite[p. 17]{Wo}) from the theory of Rankin-Selberg $L$-functions, instead of mimicking what has been done in \cite[Sec. 4]{Wo}, we shall skip the proof.
%%\pj{shall give some details for the above estimate; can remove the below later
%%\\
%%For $p\nmid N_0$, we can write
%%$$
%%\lambda_{f_j}(p) =  \alpha_{f_j}(p) + \beta_{f_j}(p),
%%$$
%%where $\alpha_{f_j}(p)$ and $\beta_{f_j}(p)$ are conjugate complex numbers such that $\alpha_{f_j}(p)\beta_{f_j}(p)=1$.
%%\\
%% need to rewrite the following paragraphs and double-check the $M$-dependence:}\\

Now, we turn our attention to $S_2$. We start by noting that as $v\mid d$, $\chi_d(\ell n) = 0$ if $(n,v)>1$. Hence, we may assume $(n,v)=1$ throughout our argument (especially, for $n$ appearing in  $S_2$). Moreover, as argued in the paragraph leading to \cite[Eq. (22)]{RaSo}, since $d$ is fixed in
a residue class $\mymod{N_0}$, if $n$ is a prime power dividing $N_0$, then $\chi_d(n)$ is determined by the congruence condition on $d$. Consequently, as  $v\sqrt{\ell}\le X^{\frac{1}{2}-2\varepsilon}$ by our assumption, it follows from \eqref{key-prop-RS-original} that adding the condition $(n,N_0)=1$ to the involving sum in $S_2$ contributes an error at most
$$
\ll\frac{1}{L} \sum_{(n,N_0)>1}  \frac{M\Lambda(n)}{\sqrt{n}} 
\bigg|  \sum_{\substack{ d\in\mathcal{F}(\kappa,a)\\ v\mid d}} \chi_d(\ell) \Phi\left(\frac{\kappa d}{X}\right) \bigg|
\ll \delta(\ell=\square) \frac{MX}{vL} + MX^{\frac{1}{2} +\varepsilon} \ell^{\frac{1}{2}}.
$$

Recall that $d\in \mathcal{F}(\kappa,a)$ has to be square-free. So, for $d$ coprime to $N_0$ such that $v\mid d$, since $v$ is also square-free and coprime to $N_0$, one has 
\begin{equation*}
\sum_{\beta\mid (v,d/v)}\mu (\beta) \sum_{\substack{ (\alpha , vN_0)=1\\ \alpha^2\mid d/v }}\mu (\alpha)
= \begin{cases}
1 & \text{if $d$ is square-free and $v\mid d$;}  \\
 0 & \text{otherwise}.
   \end{cases}
\end{equation*}
From this, as in \cite[Eq. (20)]{RaSo15}, writing $d = k v\beta \alpha^2$, one then obtains
$$
\sum_{\substack{ d\in\mathcal{F}(\kappa,a)\\ v\mid d}}  \chi_d(\ell n) \Phi\left(\frac{\kappa d}{X}\right)
= 
\sum_{\beta\mid v} \sum_{ (\alpha ,\ell n vN_0)=1} \mu (\beta)\mu (\alpha) \left(\frac{v\beta \alpha^2}{\ell n}\right)
  \sum_{ k\equiv a \overline{ v\beta \alpha^2  } \mymod{N_0} }  \left(\frac{k}{\ell n}\right)\Phi\left(\frac{\kappa k v\beta \alpha^2}{X}\right).
$$
For a positive parameter  $A\le X$ to be chosen later, bounding the sum over $k$ trivially, one can see that the contribution  of the terms in the expression with  $\alpha > A$ is
$$
\ll\sum_{\beta\mid v} \sum_{ \alpha > A}  \frac{X}{v\beta \alpha^2} \ll\frac{Xv^{\varepsilon}}{vA}.
$$
Therefore, under GRH, we know the terms with $\alpha>A$ in $S_2$ the contributes an error at most
\begin{equation}\label{1st-error}
\ll \frac{1}{L} \sum_{n=1}^\infty \frac{\Lambda_\mathcal{L}(n)}{\sqrt{n}} 
\left(\widehat{h} \left(\frac{\log n}{L} \right) +\widehat{h} \left(-\frac{\log n}{L} \right) \right)  \frac{Xv^{\varepsilon}}{vA}
\ll \frac{MX L v^\varepsilon}{vA}\ll \frac{MX\log X}{v^{1-\varepsilon} A }.
\end{equation}

To handle remaining terms in $S_2$, setting
$$
H(\xi) =   \widehat{h}(\xi)  +\widehat{h} (-\xi) ,
$$
we can write the terms with $\alpha\le A$ in $S_2$ as
\begin{align}\label{alpha<A}
\begin{split}
 \frac{1}{L} \sum_{n=1}^\infty \frac{\Lambda_\mathcal{L}(n)}{\sqrt{n}} 
H\left(\frac{\log n}{L} \right) 
\sum_{\beta\mid v} \sum_{ \substack{(\alpha ,\ell n vN_0)=1 \\ \alpha\le A }} \mu (\beta)\mu (\alpha) \left(\frac{v\beta \alpha^2}{\ell n}\right)
  \sum_{ k\equiv a \overline{ v\beta \alpha^2  } \mymod{N_0} }  \left(\frac{k}{\ell n}\right)\Phi\left(\frac{\kappa k v\beta \alpha^2}{X}\right).
  \end{split}
\end{align}
Applying the Poisson summation formulated in \cite[Lemma 7]{RaSo15}, one can express the inner sum over $k$ in \eqref{alpha<A} as
\begin{equation}\label{inner-expression}
\frac{X}{\ell n N_0 v\beta\alpha^2}   \left(\frac{\kappa N_0}{\ell n}\right)
\sum_{m } \widehat{\Phi}\left( \frac{Xm }{\ell  n N_0 v\beta\alpha^2}\right)
e\bigg( \frac{ma \overline{ v\beta\alpha^2 \ell  n}}{N_0} \bigg) \tau_m (\ell n),
\end{equation}
where $\tau_m (\ell n)$ is a Gauss sum defined by
$$
\tau_m (\ell n) = \sum_{ b \mymod{\ell n} }  \left(\frac{b}{\ell n}\right) e\left(\frac{mb}{\ell n}\right)
 = \left( \frac{1+i}{2} + \left(\frac{-1}{\ell n}\right) \frac{1-i}{2}  \right)G_m (\ell n),
$$
and
$$
G_m (\ell n) = \left( \frac{1-i}{2} + \left(\frac{-1}{\ell n}\right) \frac{1+i}{2}  \right)\sum_{ b \mymod{\ell n} }  \left(\frac{b}{\ell n}\right) e\left(\frac{mb}{\ell n}\right).
$$

We first consider the terms with $m\neq 0$, and note that we may assume $n\le e^L$ as  $\widehat{h}$ is compactly supported in $[-1, 1]$ (and so is $H$). In addition, since $\widehat{\Phi}(s) \ll_K \frac{1}{|s|^K}$ for any $K>0$, the terms with $|m|> B$ in \eqref{inner-expression} contributes at most
 \begin{equation*}
\ll_K \frac{X}{\ell n N_0 v\beta\alpha^2}  
\sum_{|m|>B } \left( \frac{\ell  n N_0 v\beta\alpha^2} {Xm }\right)^K
(\ell n)
\ll \ell  n   \left( \frac{\ell  n N_0 v\beta\alpha^2} {X }\right)^{K-1} \frac{1}{B^{K-1}}
\ll  \ell  n X^{- (K-1)\varepsilon}
\end{equation*}
where we used the trivial bound  $|\tau_m (\ell n)| \le \ell n$, provided that $ B\ge \ell e^L A^2 X^{-1+ 3\varepsilon}$ and $K> 1$. Hence, choosing $K$ to be sufficiently large (depending on $\varepsilon>0$), the contribution of the terms with $|m|> \ell e^L A^2 X^{-1+ 3\varepsilon}$ from \eqref{inner-expression} to $S_2$ is  $\ll 1$.

So, remaining terms (with  $0<|m| \le \ell e^L A^2 X^{-1+ 3\varepsilon}$) in $S_2$ becomes
\begin{align*}%\label{main-S2}
\begin{split}
&  \frac{1}{L} \sum_{n=1}^\infty \frac{\Lambda_\mathcal{L}(n)}{\sqrt{n}} 
H\left(\frac{\log n}{L} \right)\sum_{\beta\mid v} \sum_{ \substack{(\alpha ,\ell n vN_0)=1 \\ \alpha\le A }} \mu (\beta)\mu (\alpha) \left(\frac{v\beta \alpha^2}{\ell n}\right)\\
&\times 
\frac{X}{\ell n N_0 v\beta\alpha^2}   \left(\frac{\kappa N_0}{\ell n}\right)
\sum_{ 0<|m| \le \ell e^L A^2 X^{-1+ 3\varepsilon} } \widehat{\Phi}\left( \frac{Xm }{\ell  n N_0 v\beta\alpha^2}\right)
e\bigg( \frac{ma \overline{ v\beta\alpha^2 \ell  n}}{N_0} \bigg) \tau_m (\ell n).
  \end{split}
\end{align*}
Upon a rearrangement, it is
\begin{align}\label{rearr}
\begin{split}
&  \frac{X}{\ell L N_0} \sum_{ 0<|m| \le \ell e^L A^2 X^{-1+ 3\varepsilon} } \sum_{\beta\mid v} \sum_{ \substack{(\alpha ,\ell  vN_0)=1 \\ \alpha\le A }}   \frac{\mu (\beta)\mu (\alpha)}{  v\beta\alpha^2}  \sum_{(n ,\alpha N_0)=1}\frac{\Lambda_\mathcal{L}(n)}{n\sqrt{n}} 
H\left(\frac{\log n}{L} \right)\\
&\times 
  \left(\frac{\kappa v\beta N_0}{\ell n}\right)
\widehat{\Phi}\left( \frac{Xm }{\ell  n N_0 v\beta\alpha^2}\right)
e\bigg( \frac{ma \overline{ v\beta\alpha^2 \ell  n}}{N_0} \bigg) \tau_m (\ell n).
  \end{split}
\end{align}

In light of relation between $\tau_m (\ell n)$ and $G_m (\ell n)$, to handle the sum over $n$, 
 we shall bound
\begin{equation}\label{expression-in-G}
\sum_{(n ,\alpha v N_0)=1}\frac{\Lambda_\mathcal{L}(n)}{n\sqrt{n}} 
H\left(\frac{\log n}{L} \right)  \left(\frac{\pm \kappa v\beta N_0}{\ell n}\right)
\widehat{\Phi}\left( \frac{Xm }{\ell  n N_0 v\beta\alpha^2}\right)
e\bigg( \frac{ma \overline{ v\beta\alpha^2 \ell  n}}{N_0} \bigg) G_m (\ell n),
\end{equation}
and we therefore require the following explicit calculation for  $G_m (\ell n)$ from \cite[Lemma 2.3]{So00}.

\begin{lemma}
If $\ell$ and  $n$ are coprime and odd, then $G_m (\ell n) =G_m (\ell)G_m ( n)$. Moreover, if $p^\alpha$ is the largest prime power of $p$ that divides $m$ (when $m=0$, setting $\alpha=\infty$), then 
\begin{align*}
G_m (p^\beta)
=
\begin{cases}
\phi(p^\beta) &\mbox{if $\beta\le \alpha$ is even;}\\
-p^\alpha &\mbox{if $\beta= \alpha+1 $ is even;}\\
(\frac{mp^{-\alpha}}{p})  p^{\alpha + \frac{1}{2}}   &\mbox{if $\beta= \alpha+1 $  is odd;}\\
\end{cases}
\end{align*}
and $G_m (p^\beta) =0$ if $\beta\le \alpha$ is odd, or $\beta\ge \alpha+2$. 
\end{lemma}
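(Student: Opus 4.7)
The plan is to establish the two assertions separately. For the multiplicativity under $\gcd(\ell, n) = 1$ with both $\ell, n$ odd, one reparametrises the defining sum via the Chinese Remainder Theorem: writing $b \equiv b_1 n \overline{n}_\ell + b_2 \ell \overline{\ell}_n \pmod{\ell n}$ with $b_1 \in (\Z/\ell\Z)^\times$ and $b_2 \in (\Z/n\Z)^\times$, the Jacobi symbol factorises as $\left(\tfrac{b}{\ell n}\right) = \left(\tfrac{b_1}{\ell}\right)\left(\tfrac{b_2}{n}\right)$, and the additive character splits correspondingly into the product $e(mb_1 \overline{n}_\ell/\ell)\, e(mb_2 \overline{\ell}_n/n)$. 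After a change of variables $b_1' = b_1 \overline{n}_\ell$, the inner sum over $b_1$ equals $\left(\tfrac{n}{\ell}\right)$ times the unnormalised Gauss sum $S_m(\ell) := \sum_b \left(\tfrac{b}{\ell}\right) e(mb/\ell)$, and analogously for $n$. This introduces a factor $\left(\tfrac{n}{\ell}\right)\left(\tfrac{\ell}{n}\right) = (-1)^{(\ell-1)(n-1)/4}$ by quadratic reciprocity, which precisely cancels the discrepancy between the normalising prefactor of $G_m(\ell n)$ and the product of those for $G_m(\ell)$ and $G_m(n)$ across the four residue classes of $\ell, n \pmod 4$; checking the three nontrivial combinations yields $G_m(\ell n) = G_m(\ell) G_m(n)$.

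For the prime-power evaluation, write $m = p^\alpha m'$ with $\gcd(m', p) = 1$ and exploit the identity $\left(\tfrac{b}{p^\beta}\right) = \left(\tfrac{b}{p}\right)^\beta$, so the Jacobi symbol contributes $1$ when $\beta$ is even and $\left(\tfrac{b}{p}\right)$ when $\beta$ is odd. When $\beta \le \alpha$, the exponent $mb/p^\beta$ is an integer, and the sum reduces to $\sum_{b \in (\Z/p^\beta\Z)^\times} \left(\tfrac{b}{p^\beta}\right)$; this equals $\phi(p^\beta)$ for even $\beta$ and vanishes for odd $\beta$ since the $\pm 1$ values of $\left(\tfrac{b}{p}\right)$ are equidistributed on each coset modulo $p$. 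When $\beta = \alpha + 1$, parametrise $b = r + p u$ with $r \in \{1, \ldots, p-1\}$ and $u \in \{0, \ldots, p^{\beta-1}-1\}$; since $e(mb/p^\beta) = e(m'r/p)$ is $u$-independent, the sum reduces to $p^{\beta-1}$ times a complete sum in $r$. For $\beta$ even this complete sum equals $-1$, giving $-p^\alpha$; for $\beta$ odd it equals the classical Gauss sum $\left(\tfrac{m'}{p}\right)\sqrt{p}\,\epsilon_p$ with $\epsilon_p \in \{1, i\}$ determined by $p \pmod 4$, and combining with the normalising prefactor (which equals $1$ or $-i$ in the corresponding cases) produces the uniform expression $\left(\tfrac{m'}{p}\right) p^{\alpha + \frac{1}{2}}$.

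Finally, if $\beta \ge \alpha + 2$, the same parametrisation $b = r + pu$ gives $e(mb/p^\beta) = e(m'r/p^{\beta-\alpha})\, e(m'u/p^{\beta-\alpha-1})$; splitting $u = v + p^{\beta-\alpha-1} w$ shows that the sum over $u$ equals $p^\alpha$ times a complete exponential sum modulo $p^{\beta-\alpha-1}$ with nonzero frequency $m'$, which vanishes, so $G_m(p^\beta) = 0$. The main bookkeeping obstacle will be tracking the normalising prefactor $\tfrac{1-i}{2} + \left(\tfrac{-1}{\cdot}\right)\tfrac{1+i}{2}$ against the sign $\epsilon_p$ of the classical Gauss sum across the residue classes $p \equiv 1, 3 \pmod 4$ so as to extract the residue-independent formula asserted in the lemma; beyond this, every step reduces to standard manipulations of complete exponential sums.
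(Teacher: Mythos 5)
Your proof is correct, and since the paper does not prove this statement itself --- it simply cites it as Lemma 2.3 of Soundararajan's \emph{Nonvanishing of quadratic Dirichlet $L$-functions at $s=\tfrac12$} (Ann.\ of Math.\ 152, 2000) --- your self-contained argument cannot be compared with an in-paper proof. Your route (CRT to reduce the multiplicativity to a quadratic-reciprocity bookkeeping of the normalising prefactors $\epsilon_{\ell n}^{-1}$, $\epsilon_\ell^{-1}$, $\epsilon_n^{-1}$, then the standard case split on $\beta$ relative to $\alpha$ with $b = r + pu$) is the expected one, and I verified each case: $\beta\le\alpha$ via $(\tfrac{b}{p^\beta}) = (\tfrac{b}{p})^\beta$, $\beta=\alpha+1$ even giving $p^{\beta-1}\cdot(-1) = -p^\alpha$ with trivial prefactor since $p^\beta\equiv 1\ (\mathrm{mod}\ 4)$, $\beta=\alpha+1$ odd via the classical Gauss sum $g_p=\epsilon_p\sqrt p$ with $\epsilon_{p^\beta}^{-1}\epsilon_p = 1$ in both residue classes, and $\beta\ge\alpha+2$ via vanishing of the complete exponential sum modulo $p^{\beta-\alpha-1}$. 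The one place worth spelling out, which you correctly flag as ``bookkeeping,'' is the reciprocity check: with $\epsilon_n^{-1}$ equal to $1$ or $-i$ according to $n\equiv 1$ or $3\ (\mathrm{mod}\ 4)$, the identity $\epsilon_{\ell n}^{-1} = (-1)^{(\ell-1)(n-1)/4}\,\epsilon_\ell^{-1}\epsilon_n^{-1}$ is precisely what makes the factor $(\tfrac{n}{\ell})(\tfrac{\ell}{n})$ cancel, and it holds in all three nontrivial cases.
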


As discussed in \cite[p. 12]{RaSo}, if $n$ is a prime power such that $(n,m)=1$, then $G_{m}(\ell n) =0$ unless $n$ is a prime $p \nmid \ell $ (for such in instance, $G_m(\ell p) =(\frac{m}{p})  p^{\frac{1}{2}}  G_m(\ell)$). The contribution of these terms to \eqref{expression-in-G} is 
$$
G_m (\ell)   \left(\frac{ \pm\kappa  v\beta N_0}{ \ell}\right)
\sum_{(p ,\alpha \ell m v N_0)=1}\frac{\Lambda_\mathcal{L}(p)}{p} \left(\frac{ \pm\kappa m v\beta N_0}{ p}\right)e\bigg( \frac{ma \overline{ v\beta\alpha^2 \ell  p}}{N_0} \bigg)
H \left(\frac{\log p}{L} \right)
\widehat{\Phi}\left( \frac{Xm }{\ell  p N_0 v\beta\alpha^2}\right)
 .
$$
Also, as $\widehat{\Phi}(s)$ decays rapidly, we may further consider 
\begin{equation}\label{p-range}
p > \frac{X^{1- \varepsilon} |m|}{\ell   N_0 v\beta\alpha^2 }.
\end{equation}
(Otherwise, we have $\frac{X |m|}{\ell  p N_0 v\beta\alpha^2 } \ge X^{ \varepsilon} $. This forces $\widehat{\Phi}\left( \frac{Xm }{\ell  p N_0 v\beta\alpha^2}\right) \ll_K X^{-K\varepsilon} $ for any $K>0$, which gives a negligible error.)
By splitting $p$ into arithmetic progressions modulo $N_0$, it suffices to estimate
$$
G_m (\ell) \sum_{\substack{ (p ,\alpha \ell m v N_0)=1 \\ p\equiv c \mymod{N_0} }}\frac{\Lambda_\mathcal{L}(p)}{p} \left(\frac{ p}{q}\right)
H\left(\frac{\log p}{L} \right) 
\widehat{\Phi}\left( \frac{Xm }{\ell p   N_0 v\beta\alpha^2}\right),
$$
for $c$ coprime to $N_0$ and  $q\mid m v\beta$, where the sum runs over $p$ satisfying \eqref{p-range}.   Under GRH for $L(s,E)$ and its twists by quadratic characters and Dirichlet characters modulo $N_0$, Abel's summation then yields the  bound
$$
\ll M|G_m (\ell)| \left(\frac{\ell   N_0 v\beta\alpha^2 }{X^{1-\varepsilon} |m|} \right)^{\frac{1}{2}} \log^2 X
\ll  M \frac{ X^{\varepsilon} \ell^{\frac{3}{2}}   (v\beta)^{\frac{1}{2}} \alpha }{\sqrt{ X |m|}}.
$$

For the case that $n$ is a power of a prime dividing $m$. As $G_m(\ell n)=0$ for $n\nmid m^2$, we may assume  $n\mid m^2$. Again, by the rapid decay of $\widehat{\Phi}(s)$, we may assume
$$
n > \frac{X^{1- \varepsilon} |m|}{\ell N_0 v\beta\alpha^2 }.
$$
Since 
$
|G_m(\ell n  )| \le (|m| \ell n)^{\frac{1}{2}},
$
the contribution of these terms to \eqref{expression-in-G} is
$$
\ll \sum_{n\mid m^2} M \Lambda(n) \frac{(|m| \ell )^{\frac{1}{2}}}{ (X^{1- \varepsilon} |m|)/(\ell N_0 v\beta\alpha^2 )}
\ll M(\log m) X^{\varepsilon} \frac{\ell^{\frac{3}{2}} v\beta\alpha^2}{ X |m|^{\frac{1}{2}}}
\ll  \frac{M X^{2\varepsilon} \ell^{\frac{3}{2}}   (v\beta)^{\frac{1}{2}} \alpha }{\sqrt{ X |m|}},
$$
as $\log m \ll \log X$, provided that $\alpha\le A\le \sqrt{X}$.

Thus, \eqref{rearr} becomes
$$
\ll \frac{X}{\ell L N_0} \sum_{ 0<|m| \le \ell e^L A^2 X^{-1+ 3\varepsilon} } \sum_{\beta\mid v} \sum_{  \alpha\le A }   \frac{1}{  v\beta\alpha^2} \cdot  \frac{  M X^{2\varepsilon} \ell^{\frac{3}{2}}   (v\beta)^{\frac{1}{2}} \alpha }{\sqrt{ X |m|}}
\ll X^{\frac{1}{2} +3\varepsilon}  \ell^{\frac{1}{2}}   \sum_{ 0<|m| \le \ell e^L A^2 X^{-1+ 3\varepsilon} } \frac{\log A}{\sqrt{  |m|}},
$$
which is 
$$
\ll M \ell  e^{\frac{L}{2}} A X^{5\varepsilon},
$$
provided that $ A\le \sqrt{X}$.
Balancing this with \eqref{1st-error}, we shall choose
\begin{equation}\label{choiceA}
A = \X^{\frac{1}{2} -2\varepsilon}  \ell^{-\frac{1}{2} } e^{-\frac{L}{4}},
\end{equation}
which gives the both error is at most $ O(X^{\frac{1}{2}+ 3\varepsilon}\ell^{\frac{1}{2}}  e^{\frac{L}{4}}).$

Finally, we focus on the main term of $S_2$ arising from terms with $m=0$. Recall that $\tau_0(\ell n) = \phi(\ell n)$ if $\ell n$ is a square, and $\tau_0(\ell n) = 0$ otherwise. From  \eqref{alpha<A}  and \eqref{inner-expression}, combined with the above discussion for errors, it follows that  the main term of $S_2$ is
 \begin{align}\label{mid-main-S2}
\begin{split}
 \frac{X}{v L N_0} \widehat{\Phi}(0) \sum_{\beta\mid v}  \frac{\mu (\beta)}{  \beta}\sum_{\substack{ ( n , vN_0)=1\\ \ell n=\square}}\sum_{ \substack{(\alpha ,\ell n  vN_0)=1 \\ \alpha\le A }}   \frac{\mu (\alpha)}{  \alpha^2}
  \frac{\Lambda_\mathcal{L}(n)}{\sqrt{n}} \frac{\phi(\ell n)}{\ell n} 
\left(\widehat{h} \left(\frac{\log n}{L} \right) +\widehat{h} \left(-\frac{\log n}{L} \right) \right),
  \end{split}
\end{align}
where we used the condition $(\ell , v)=1$. 

Observe that as $n$ in the sum of $S_2$ must be a prime power, to have $n\ell=\square$, $\ell$ can only be a square, or a prime times a square. Firstly, when $\ell$ is a square, writing $n=r^2$, by a direct calculation, we can express the inner triple sum in \eqref{mid-main-S2} as
$$
\sum_{(r,vN_0)=1 } \frac{\Lambda_\mathcal{L}(r^2)}{r}
\bigg(  \prod_{p\mid r \ell v} \left(1 + \frac{1}{p} \right)^{-1}
 \prod_{p\nmid N_0 } \left(1 - \frac{1}{p^2} \right)  +O\left( \frac{v^\varepsilon}{A}\right) \bigg)  
\left(\widehat{h} \left(\frac{\log r^2}{L} \right) +\widehat{h} \left(-\frac{\log r^2}{L} \right) \right)  .
$$
Recalling that the theory of Rankin-Selberg $L$-functions gives
$$
\sum_{ \substack{\p\le y\\ (p,N_0)=1}} \frac{\Lambda_{\mathcal{L}}(p^2)}{p} 
= \sum_{j=1}^M\sum_{ \substack{\p\le y\\ (p,N_0)=1}} \frac{\Lambda_{f_j}(p^2)}{p} =M( -\log y +O(1)),
$$
and applying Abel's summation, we then establish that \eqref{mid-main-S2} equals
\begin{align*}
&-  \frac{X}{vLN_0} \widehat{\Phi}(0)
 \bigg(  \prod_{p\mid  \ell v} \left(1 + \frac{1}{p} \right)^{-1}
 \prod_{p\nmid N_0 } \left(1 - \frac{1}{p^2} \right)  +O\left( \frac{v^\varepsilon}{A}\right) \bigg)\\
&\times    \left(
 \int_1^{\infty}  \left(\widehat{h} \left(\frac{2\log y}{L} \right) +\widehat{h} \left(-\frac{2\log y}{L} \right) \right) \frac{dy}{y} +O(1) \right)\\
 &= -  \frac{MX}{vN_0} \widehat{\Phi}(0) \prod_{p\mid  \ell v} \left(1 + \frac{1}{p} \right)^{-1} 
 \prod_{p\nmid N_0 } \left(1 - \frac{1}{p^2} \right)
 \frac{h(0)}{2} +O\left(\frac{MX v^\varepsilon}{vA} + \frac{MX}{vL}\right),
\end{align*}
(cf. \cite[Eq. (30)]{RaSo}), where the error term involving $A$ is acceptable by our choice of $A$ in \eqref{choiceA}.

Lastly, if $\ell$ is $q$ times a square, with a (unique) prime $q$, we must have $n=q r^2$ for some $r$. However, as $n$ has to be a prime power in $S_2$, it can only be an odd power of $q$. Therefore, we can bound \eqref{mid-main-S2}, the main term of $S_2$, by
$$
\ll \frac{MX}{vLN_0} \frac{\log q}{\sqrt{q}}
  \prod_{p\mid  \ell v} \left(1 + \frac{1}{p} \right)^{-1}
 \prod_{p\nmid N_0 } \left(1 - \frac{1}{p^2} \right) 
$$
(cf. \cite[Eq. (31)]{RaSo}), which concludes the proof.

\section{Moment calculations}

In this section, we shall prove Propositions \ref{M-forms-moments} and \ref{P-C-moments}.

%We will omit the proof of Proposition \ref{P-C-moments} as the first and second assertions follow from the same lines of the proofs of \cite[Proposition 7]{RaSo} and \cite[Proposition 2.2]{Wo}, respectively.

%\pj{need to double-check/revise below}

\subsection{Proof of Proposition \ref{M-forms-moments}}\label{proof-2.2}
It follows from \eqref{key-prop-RS-original} with $v=1$ that for $n\in\Bbb{N}$ coprime to $N_0$ such that $\sqrt{n}\le X^{\frac{1}{2}-\varepsilon}$, we have
$$
 \sum_{d\in\mathcal{F}(\kappa,a)} \chi_d(n) \Phi\left(\frac{\kappa d}{X}\right)
= \delta(n=\square)
\frac{X}{N_0} \prod_{p\mid n } \left(1 + \frac{1}{p} \right)^{-1}
 \prod_{p\nmid N_0 } \left(1 - \frac{1}{p^2} \right) \widehat{\Phi}(0)
 + O(X^{\frac{1}{2}+\varepsilon}n^{\frac{1}{2}}),
$$
where $\delta(n=\square)=1$ if $n$ is a square, and $\delta(n=\square)=0$ otherwise. Hence, expanding out  $\mathcal{P}_{\mathbf{a}}(x;d)^k$, we can express the left of \eqref{M-P-moments} as
\begin{align*}
%\sum_{d\in\mathcal{F}(\kappa,a)}   \mathcal{P}_{\mathbf{a}}(x;d)^k\Phi\left(\frac{\kappa d}{X}\right)= 
 \sum_{\substack{p_1,\ldots p_k\le x \\  p_i\nmid N_0}} 
 \frac{ \prod_{i=1}^k (a_{1}\lambda_{f_1}(p_i)+ \cdots +a_{M}\lambda_{f_M} (p_i)){ w(p_i) }  }{\sqrt{p_1 \cdots p_k}} \sum_{d\in\mathcal{F}(\kappa,a)}  \chi_d(p_1 \cdots p_k) \Phi\left(\frac{\kappa d}{X}\right).
\end{align*}
Observe that if $p_1\cdots p_k$  is not a square (which is always the case if $k$ is odd), then the inner sum over $d$ is $O(X^{\frac{1}{2}+\varepsilon}(p_1 \cdots p_k)^{\frac{1}{2}})$. Thus, Deligne's bound $|\lambda_{f_j} (p_i)|\le 2$ implies that the contribution of each  $p_1,\ldots, p_k$ such that  $p_1\cdots p_k$  is not a square is $O_{\mathbf{a}, k }(X^{\frac{1}{2} +\varepsilon}) $. Consequently, when $k$ is odd, the left of \eqref{M-P-moments} is $O_{\mathbf{a}, k }( x^k X^{\frac{1}{2} +\varepsilon})$.

Hence, it remains to consider the case of even $k$ and estimate
$$
\frac{X}{N_0}
 \prod_{p\nmid N_0 } \left(1 - \frac{1}{p^2} \right) \widehat{\Phi}(0) 
  \sum_{\substack{p_1,\ldots, p_k\le x \\  p_i\nmid N_0 \\ p_1\cdots p_k =\square}} 
 \frac{ \prod_{i=1}^k (a_{1}\lambda_{f_1}(p_i)+ \cdots +a_{M}\lambda_{f_M} (p_i)) { w(p_i) }}{\sqrt{p_1 \cdots p_k}} \prod_{p\mid p_1\cdots p_k } \left(1 + \frac{1}{p} \right)^{-1}.
$$
Denote the distinct primes appearing in  $p_1\cdots p_k $ by $q_1< \cdots < q_r$ with  multiplicity $b_1,\ldots, b_r$, respectively. As each $b_j\ge 2$ and $\sum_{j=1}^r b_j =r$, we deduce $r\le \frac{k}{2}$. Clearly, the contribution of $q_1< \cdots < q_r$ with $r< \frac{k}{2}$ is
$$
\ll X \bigg( \sum_{\substack{p \le x \\  p \nmid N_0 }} 
 \frac{ (a_{1}\lambda_{f_1}(p)+ \cdots +a_{M}\lambda_{f_M} (p))^2}{p} \bigg)^{\frac{k}{2} -1}
\ll_{\mathbf{a},r} X \bigg( \sum_{p \le x } \frac{1}{p}
\bigg)^{\frac{k}{2} -1}   \ll  X(\log\log x)^{\frac{k}{2} -1},
$$
which is negligible as $r\le  \frac{k}{2} -1 $. 
When $r= \frac{k}{2}$, each $b_j$ must be equal to 2, and so the contribution of $q_1< \cdots < q_r$ with $r= \frac{k}{2}$ is
$$
\frac{X}{N_0}
 \prod_{p\nmid N_0 } \left(1 - \frac{1}{p^2} \right) \widehat{\Phi}(0) 
\frac{k!}{2^{k/2} (k/2)!} 
  \sum_{\substack{q_1,\ldots, q_{k/2}\le x \\ q_\ell \text{ distinct} \\  q_\ell\nmid N_0}} 
 \frac{ \prod_{\ell=1}^{k/2} ( a_{1}\lambda_{f_1}(q_\ell)+ \cdots +a_{M}\lambda_{f_M} (q_\ell) )^2 { w(q_\ell)^2 } }{(q_1+1) \cdots (q_{k/2}+1) } .
$$
(For the rationale of the appearance of the factor $\frac{k!}{2^{k/2} (k/2)!}$, see \cite[p. 5, especially Eq. (4.2)]{Ha}.) Moreover, from the estimate
$$
\sum_{p \le x } 
 \frac{ \lambda_{f_j}(p) \lambda_{f_{j'}} (p)}{p}  =\delta_{jj' } \log\log x +O(1), 
$$
where $\delta_{jj' } =1$ if $j=j'$, and $\delta_{jj' }=0$ otherwise, it then follows that the  inner sum over $q_\ell$ is
$$
 X \bigg( \sum_{\substack{p \le x \\  p \nmid N_0 }} 
 \frac{ (a_{1}\lambda_{f_1}(p)+ \cdots +a_{M}\lambda_{f_M} (p))^2}{p} \bigg)^{\frac{k}{2}}
\ll X \bigg( \sum_{j=1}^M  a_j^2 \log\log x +O_{\mathbf{a}}(1) 
\bigg)^{\frac{k}{2}},
$$
which, combined with the above discussion, yields the first assertion.

To prove \eqref{M-P-h-moments}, we expand $\mathcal{P}_{\mathbf{a}}(x;d)$ as above so that the left of \eqref{M-P-h-moments} becomes
$$
 \sum_{\substack{p_1,\ldots, p_k\le x \\  p_i\nmid N_0}} 
 \frac{ \prod_{i=1}^k (a_{1}\lambda_{f_1}(p_i)+ \cdots +a_{M}\lambda_{f_M} (p_i))w(p_i)}{\sqrt{p_1 \cdots p_k}} \sum_{d\in\mathcal{F}(\kappa,a)}
  \sum_{j=1}^M \sum_{\gamma_{j,d}} h\left( \frac{\gamma_{j,d} L}{2\pi}\right) \chi_d(p_1 \cdots p_k) \Phi\left(\frac{\kappa d}{X}\right).
$$
By the first part of Proposition 2, we know that the contribution of the terms arising from $p_1\cdots p_k$ that is not a square nor a prime times a square contribute is
$$
\ll\sum_{\substack{p_1,\ldots p_k\le x \\  p_i\nmid N_0}} 
 \frac{ \prod_{i=1}^k (a_{1}\lambda_{f_1}(p_i)+ \cdots +a_{M}\lambda_{f_M} (p_i))}{\sqrt{p_1 \cdots p_k}} X^{\frac{1}{2} +\varepsilon} (p_1 \cdots p_k)^{\frac{1}{2}} e^{\frac{L}{4}}
 \ll_{\mathbf{a},k} x^k  X^{\frac{1}{2} +\varepsilon} e^{\frac{L}{4}}.
$$

If $p_1\cdots p_k$  is a square (which happens only if $k$ is even), by the second part of Proposition 2,  the main term is
\begin{align*}
& \frac{  M X}{N_0}
 \prod_{p\nmid N_0 } \left(1 - \frac{1}{p^2} \right) \widehat{\Phi}(0)
 \left( \frac{2\log X}{L} \widehat{h}(0)  +\frac{h(0)}{2}+ O(L^{-1}) \right)\\
& \times
  \sum_{\substack{p_1,\ldots p_k\le x \\  p_i\nmid N_0 \\ p_1\cdots p_k =\square}} 
 \frac{ \prod_{i=1}^k (a_{1}\lambda_{f_1}(p_i)+ \cdots +a_{M}\lambda_{f_M} (p_i))w(p_i)}{\sqrt{p_1 \cdots p_k}} \prod_{p\mid p_1\cdots p_k } \left(1 + \frac{1}{p} \right)^{-1}.
\end{align*}
As the inner sum can be estimated in the exact same way as above, we then conclude that the main term  is
\begin{align*}
 \frac{ M  X}{N_0}
 \prod_{p\nmid N_0 } \left(1 - \frac{1}{p^2} \right) \widehat{\Phi}(0)
 \left( \frac{2\log X}{L} \widehat{h}(0)  +\frac{h(0)}{2}+ O(L^{-1}) \right)
(M_k + o(1))(\log \log X)^{\frac{k}{2}}.
\end{align*}

Finally, for odd $k$, by the third part of Proposition 2, we can bound the contribution of the
terms arising from $p_1\cdots p_k$ that is a prime times a square by
\begin{align*}
&\ll
\frac{X}{LN_0} \bigg|  \sum_{ \substack{q\le x \\ q\nmid N_0}} \frac{(\log q)((a_{1}\lambda_{f_1}(q)+ \cdots +a_{M}\lambda_{f_M} (q))}{ q}  \bigg|\\
&\times  \sum_{\substack{p_1,\ldots p_{k-1}\le x \\  p_i\nmid N_0 \\ p_1\cdots p_{k-1} =\square}} 
 \frac{ \prod_{i=1}^{k-1} (a_{1}\lambda_{f_1}(p_i)+ \cdots +a_{M}\lambda_{f_M} (p_i))}{\sqrt{p_1 \cdots p_{k-1}}} \prod_{p\mid p_1\cdots p_{k-1} } \left(1 + \frac{1}{p} \right)^{-1}
 +x^k X^{\frac{1}{2}+\varepsilon} e^{\frac{L}{4}}\\
 &\ll \frac{X}{ LN_0} (\log\log X)^{\frac{ k-1}{2}}  +x^k X^{\frac{1}{2}+\varepsilon} e^{\frac{L}{4}},
\end{align*}
as desired, where we used the GRH estimate 
$
\sum_{q\le t } (\log q)\lambda_{f_j}(q) 
\ll t^{\frac{1}{2}} \log^2 (N_0t)
$
to deduced that the first sum (over $q$) is $O(1)$.
%the first sum is
%$$
%\ll \int_2^x  \frac{1}{t} d   \sum_{ \substack{q\le t\\ q\nmid N_0}} (\log q)((a_{1}\lambda_{f_1}(q)+ \cdots +a_{<}\lambda_{f_M} (q)) 
%$$ 
%$$
%\ll_{\mathbf{a}} 1
%+ \int_2^x  \frac{1}{t^{2}}  t^{\frac{1}{2}} \log^2 (N_0t) dt
%\ll  1,
%$$
%which completes the proof.

\subsection{Proof of Proposition \ref{P-C-moments}}\label{pf-PC}

As the first part of the proposition follows from the virtue of the proof of \cite[Proposition 7]{RaSo15},\footnote{Indeed, the presence of $b$ and $c$ would add the factor $c^j (b+c)^{k-j}$ to the sum in \cite[Eq. (15)]{RaSo15}, and it could be easily checked that such an extra factor does not affect the proof but appears in the first displayed asymptotics in \cite[p. 1051]{RaSo15}. This leads to the change of the corresponding terms of $\log\log X$ from $( (\sigma(E)^2 -1) \log\log X)^{j/2} ( \log\log X)^{(k-j)/2}$ to 
$(c^2 (\sigma(E)^2 -1) \log\log X)^{j/2} ( (b+c)^2 \log\log X)^{(k-j)/2}$ (note that both $j$ and $k-j$ are even here). From which (and applying the binomial theorem), one can derive the claimed generalisation \eqref{P-C-moments-1}.}  we shall omit it and prove \eqref{P-C-h-moments}.
%
%. Also, we shall just sketch the proof for the second part of the proposition as it is similar to the argument used in \cite[Sec. 5]{Wo}.
%
%
%
%\pj{Shall work with new $\mathcal{P}(d;x)$; we can omit most of proofs but mention difference}
%
To begin, opening the sum, we see that the left of \eqref{P-C-h-moments} is
\begin{equation}\label{left=}
\sum_{j=0}^{k}  {k \choose j}  (-c)^j (b+c)^{k-j} \sum_{d\in\mathcal{F}(\kappa,a)} 
\sum_{\log X\le p_1,\ldots, p_j\le x}C_{p_1}(d)\cdots C_{p_j}(d)  \mathcal{P}(d; x)^{k-j} 
  \sum_{\gamma_{d}} h\left( \frac{\gamma_{d} L}{2\pi}\right)\Phi\left(\frac{\kappa d}{X}\right).
\end{equation}
As noted in \cite[pp. 1048-1049]{RaSo15}, if $q_1 < q_2 < \cdots < q_u$ are the distinct primes appearing in $p_1,\ldots, p_j$,  denoting the multiplicity of $q_i$  by $m_i$, one has
$$
C_{p_1}(d)\cdots C_{p_j}(d)  =\prod_{i=1}^u  C_{q_i}(d)^{m_i}
= \sum_{v\mid (d,q_1\cdots q_u)} 
\sum_{rs=v}\mu(r)
\prod_{i=1}^u  C_{q_i}(s)^{m_i}.
$$
Hence, the inner sum (over $d$) of \eqref{left=} can be written as
\begin{equation}\label{left=-2}
\sum_{\log X\le p_1,\ldots, p_j\le x}\sum_{v\mid q_1\cdots q_u} 
\sum_{rs=v}\mu(r)
\prod_{i=1}^u  C_{q_i}(s)^{m_i}\sum_{\substack{ d\in\mathcal{F}(\kappa,a)\\ v\mid d}} 
 \mathcal{P}(d; x)^{k-j} 
  \sum_{\gamma_{d}} h\left( \frac{\gamma_{d} L}{2\pi}\right)\Phi\left(\frac{\kappa d}{X}\right).
\end{equation}
(Note that  $v\le x^k= X^{k/ \log \log \log X}$ for $v$ appearing in \eqref{left=-2}; we shall use this repeatedly in the remaining discussion.)
Expanding out $\mathcal{P}(d; x)^{k-j} $, we can express the last double sum in \eqref{left=-2} as
\begin{equation}\label{main-double}
 \sum_{\substack{p_{j+1},\ldots, p_k\le x \\  p_i\nmid vN_0}} 
 \frac{ \lambda_{E}(p_{j+1})\cdots\lambda_{E} (p_k) w(p_{j+1})\cdots w (p_k)}{\sqrt{p_{j+1} \cdots p_k}} \sum_{\substack{ d\in\mathcal{F}(\kappa,a)\\ v\mid d}}  
 \sum_{\gamma_{d}} h\left( \frac{\gamma_{d} L}{2\pi}\right) \chi_d(p_{j+1} \cdots p_k) \Phi\left(\frac{\kappa d}{X}\right).
\end{equation}
(Note that if $p_i \mid v$ for some $i$, then $\chi_d(p_{j+1} \cdots p_k)=0$ as $v\mid d$.) Applying the first part of Proposition \ref{key-prop-2}, we deduce that the contribution of the terms, arising from $p_{j+1}\cdots p_k$ that is not a square nor a prime times a square, to \eqref{main-double} is
\begin{equation}\label{main-double-error}
\ll\sum_{p_{j+1},\ldots, p_k\le x } 
\frac{| \lambda_{E}(p_{j+1})\cdots\lambda_{E} (p_k)|}{\sqrt{p_{j+1} \cdots p_k}} 
 X^{\frac{1}{2} +\varepsilon} (p_{j+1} \cdots p_k)^{\frac{1}{2}} e^{\frac{L}{4}}
 \ll_{k} x^k  X^{\frac{1}{2} +\varepsilon} e^{\frac{L}{4}}
\end{equation}
as $|\lambda_E(p)|\le 2$.

If $p_{j+1}\cdots p_k$  is a square (when this happens, $k-j$ must be even), by the second part of Proposition \ref{key-prop-2}, the main term of \eqref{main-double} is
\begin{align}\label{P-C-main}
 \begin{split}
& \frac{X}{vN_0}
 \prod_{p\nmid N_0 } \left(1 - \frac{1}{p^2} \right) 
 \prod_{p\mid v} \left( 1+\frac{1}{p}\right)^{-1}
 \widehat{\Phi}(0)
 \left( \frac{2\log X}{L} \widehat{h}(0)  +\frac{h(0)}{2}+ O(L^{-1}) \right)\\
& \times
  \sum_{\substack{p_{j+1},\ldots, p_k\le x \\  p_i\nmid vN_0 \\ p_{j+1}\cdots p_k =\square}} 
 \frac{ \lambda_{E}(p_{j+1})\cdots\lambda_{E} (p_k)  w(p_{j+1})\cdots w (p_k)}{\sqrt{p_{j+1} \cdots p_k} }  \prod_{p\mid p_{j+1}\cdots p_k } \left(1 + \frac{1}{p} \right)^{-1}
  \end{split}
\end{align}
(while the error is still $\ll_{k} x^k  X^{\frac{1}{2} +\varepsilon} e^{\frac{L}{4}}$ as argued above). Observe that the terms above with some $p_i$ dividing $v$ contribute at most
$$
\ll \frac{ X(\log X)(\log \log X)^{k}}{ v L\log X}
$$
as every prime factor of $v$ is larger than $\log X$. Therefore, removing the condition $ p_i\nmid v$ for $j +1 \le i \le k$ results in an error $\ll X(\log X)(\log \log X)^{k}/(vL \log X)$. Now, we further denote the distinct primes appearing in  $p_{j+1}\cdots p_k $ by $q'_1, \ldots, q'_{u'}$ with  multiplicity $m'_1,\ldots, m'_{u'}$, respectively. Note that as each $m'_i\ge 2$ and $\sum_{i=1}^{u'} m'_i = k-j$, we know $u\le \frac{k-j}{2}$. The contribution of $q'_1, \ldots, q'_{u'}$ with $u'< \frac{k-j}{2}$ (which forces $u'\le  \frac{k-j}{2} -1 $) to \eqref{P-C-main} is
$$
\ll \frac{X}{v}  \prod_{p\mid v}\left( 1+\frac{1}{p}\right)^{-1} \frac{\log X}{L} 
 \bigg( \sum_{\substack{p \le x \\  p \nmid N_0 }} 
 \frac{ \lambda_{E}(p)^2 w(p)^2 }{p} \bigg)^{\frac{k-j}{2} -1}
\ll  \frac{X\log X}{vL} (\log\log X)^{\frac{k-j}{2} -1},
$$
which is negligible. Lastly, for $u'= \frac{k-j}{2}$, we know that each $m_i$ must equal 2, and thus the contribution of $q'_1, \ldots, q'_{(k-j)/2}$ (to the sum over $p_i$ in \eqref{P-C-main}) is
$$
\frac{(k-j)!}{2^{(k-j)/2} ((k-j)/2)!} 
  \sum_{\substack{q'_1,\ldots, q'_{(k-j)/2}\le x \\ q'_i \nmid N_0}} 
 \frac{ \prod_{i=1}^{(k-j)/2} \lambda_{E}(q'_i)^2}{(q'_1+1) \cdots (q'_{(k-j)/2}+1) } .
$$
(A justification of the factor $\frac{(k-j)!}{2^{(k-j)/2} ((k-j)/2)!}$ can be found in \cite[p. 5]{Ha}.)
Observe that
$$
 \sum_{\substack{q \le x \\  q \nmid N_0 }} 
 \frac{ \lambda_{E}(q)^2 w(q)^2}{q +1} 
 =  \sum_{\substack{q \le x \\  q \nmid N_0 }} 
 \frac{ \lambda_{E}(q)^2 w(q)^2}{q} + O(1)
 = \sum_{\substack{q \le x \\  q \nmid N_0 }} 
 \frac{ \lambda_{E}(q)^2   }{q}  +  O(1)
 = \log\log x + O(1),
$$
where the middle equality follows from the mean value theorem and the elementary estimate $\sum_{p\le y} \frac{\log p}{p} \ll \log y$. Hence, we then conclude that the main term of \eqref{main-double} is equal to
\begin{align}\label{P-C-main-middle}
\begin{split}
 &\frac{X}{vN_0}
 \prod_{p\nmid N_0 } \left(1 - \frac{1}{p^2} \right)\prod_{p\mid v}\left( 1+\frac{1}{p}\right)^{-1}  \widehat{\Phi}(0)
 \left( \frac{2\log X}{L} \widehat{h}(0)  +\frac{h(0)}{2}+ O(L^{-1}) \right)\\
&\times \left(\frac{(k-j)!}{2^{(k-j)/2} ((k-j)/2)!} + o(1)\right)(\log \log X)^{\frac{k-j}{2}}
 \end{split}
\end{align}
in this case (namely, the contribution of terms with $p_{j+1}\cdots p_k =\square$).

Finally, by the third part of Proposition \ref{key-prop-2}, we can bound the contribution of the
terms arising from $p_{j+1}\cdots p_k$ that is a prime times a square (which forces $k-j$ to be odd) by
\begin{align}\label{P-C-main-middle'}
\begin{split}
&\ll
\frac{X}{vLN_0}   \sum_{ q\le x } \frac{\log q}{ q}  
 \prod_{p\mid v} \left( 1+\frac{1}{p}\right)^{-1}
 \sum_{\substack{p_{j+1},\ldots, p_{k-1}\le x \\  p_i\nmid N_0 \\ p_{j+1}\cdots p_{k-1} =\square}} 
 \frac{ \prod_{i=j+1}^{k-1} \lambda_{E}(p_i)}{\sqrt{p_{j+1} \cdots p_{k-1}}} 
 +x^k X^{\frac{1}{2}+\varepsilon} e^{\frac{L}{4}}\\
 &\ll \frac{X \log x}{v LN_0} \prod_{p\mid v} \left( 1+\frac{1}{p}\right)^{-1} (\log\log X)^{\frac{ k-j-1}{2}}  +x^{k} X^{\frac{1}{2}+\varepsilon} e^{\frac{L}{4}}.
 \end{split}
\end{align}

In light of the above discussion (especially, \eqref{main-double}, \eqref{main-double-error}, \eqref{P-C-main-middle}, and \eqref{P-C-main-middle'}), to handle  \eqref{left=-2}, we shall estimate
\begin{equation}\label{Cq-sum}
\sum_{\log X\le p_1,\ldots, p_j\le x}\sum_{v\mid q_1\cdots q_u} 
\sum_{rs=v}\mu(r)
\prod_{i=1}^u  C_{q_i}(s)^{m_i} \frac{1}{v}
 \prod_{p\mid v} \left( 1+\frac{1}{p}\right)^{-1}
\end{equation}
(for $j\ge 1$).
Noting that $\frac{1}{v}
 \prod_{p\mid v}  ( 1+\frac{1}{p} )^{-1}
 =  \prod_{p\mid v}  \frac{1}{p+1} $ (as  $v$ is square-free  in our consideration), 
 we follow \cite[p. 1050]{RaSo15} to set
$$
G(q_1^{m_1} \cdots  q_u^{m_u})
=\sum_{v\mid q_1\cdots q_u} 
\sum_{rs=v}\mu(r)
\prod_{i=1}^u  C_{q_i}(s)^{m_i}
\prod_{p\mid v}\frac{1}{p+1},
$$
which is multiplicative. In addition, we recall that
$$
G(p^\alpha) = ( \log c(p)  )^\alpha \left( \frac{1}{p+1}  \left(1- \frac{1}{p+1} \right)^\alpha
+\frac{p}{p+1} \left( \frac{-1}{p+1} \right)^\alpha \right),
$$
which implies that $G(q_1^{m_1} \cdots  q_u^{m_u})$ is non-vanishing only if  $m_i\ge 2$ for all $i$, and also 
$$
G(p^\alpha) \ll (\log c(p))^\alpha /p
$$
for all $\alpha\ge 2$.

Given $q_1 < \cdots < q_u$ and $m_i \ge 2$ with $\sum_{i=1}^{u} m_i = j$, the number of
choices for $p_1, \ldots, p_j$ 
is $
\frac{j!}{m_1! \cdots m_u!}.$ Hence, the first double sum in \eqref{Cq-sum} can be written as
\begin{equation}\label{G-sum}
\sum_{\substack{u\\  m_i\ge 2 \,\forall i\\ \sum_{i=1}^{u} m_i = j }}
\frac{j!}{m_1! \cdots m_u!} \sum_{\log X \le q_1<\cdots<q_u< x}
G(q_1^{m_1} \cdots  q_u^{m_u}).
\end{equation}
If  $m_i \ge 3$ for some $i$, then the above inner sum over $q_i$  is at most
\begin{equation}\label{G-sum-error}
\ll(\log \log X)^{(j-1)/2}.
\end{equation}
Otherwise, if $m_i = 2$ for all $i$, we must have $j = 2 u$ (which is even). As a direct calculation shows
$$ 
  G(p^2)=  \frac{ (\log c(p))^2}{p}  + O\left(\frac{1}{p^2}\right), 
$$  
it then follows from \eqref{est-cp2} that the terms with all $m_i=2$ contribute
\begin{equation}\label{G-sum-main}
 \frac{j!}{2^{j/2} (j/2)!}( (\sigma(E)^2 -1) \log\log X +O( \log\log\log X))^{j/2} 
\end{equation}
to \eqref{G-sum}. Therefore, when $k$ is even, it follows from \eqref{left=-2}, \eqref{main-double}, \eqref{main-double-error}, \eqref{P-C-main-middle},  \eqref{Cq-sum},  and  \eqref{G-sum-main} that 
\eqref{left=} (and thus the left of \eqref{P-C-h-moments}) becomes
\begin{align*}
& \sum_{\substack{ j=0\\ \text{$j$ even}}}^{k}  {k \choose j}  (-1)^j   \frac{X}{N_0}
 \prod_{p\nmid N_0 } \left(1 - \frac{1}{p^2} \right)  \widehat{\Phi}(0)
 \left( \frac{2\log X}{L} \widehat{h}(0)  +\frac{h(0)}{2}+ O(L^{-1}) \right)\\
&\times \frac{j!}{2^{j/2} (j/2)!}(c^2 (\sigma(E)^2 -1 +o(1) ) \log\log X)^{j/2} 
  \frac{(k-j)!}{2^{(k-j)/2} ((k-j)/2)!}( ((b+c)^2  +o(1)) \log \log X)^{\frac{k-j}{2}},
 \end{align*}
which yields the claimed estimate for even $k$. Here, we use the consequence of \eqref{P-C-main-middle'} and \eqref{G-sum-error} that, in general, the contribution of those terms, with either $j$ or $k-j$ being odd, to \eqref{left=} is at most
\begin{align*}
\ll_{b,c,k}    \frac{X\log X }{ LN_0}(\log\log X)^{\frac{ k-1}{2}}.
\end{align*}
Finally, we complete the proof by noting that this estimate also implies that for odd $k$, the ``main term'' of \eqref{left=} does not manifest. Indeed, it is  $\frac{X\log X }{ LN_0} o((\log\log X)^{\frac{ k}{2}})$ as required by \eqref{P-C-h-moments}.

\section{Remarks on the conjecture of {Radziwi\l\l} and Soundararajan}\label{rmkRS}

Let $E$ be an elliptic curve over $\Bbb{Q}$, and let $\mathcal{E}$ be defined as in \eqref{def-E}.  Note that it follows from \cite[Proposition 1]{RaSo} that for every $d\in\mathcal{E}$ with $X\le |d|\le 2X$, if  $L(\frac{1}{2},E_{d})\neq 0$, then under GRH, one has
\begin{equation}\label{RaSo-prop1}
 \log L(\oh, E_d) = \mathcal{P}(d; x) -\oh \log \log X 
+O\bigg( \log\log\log X +
  \sum_{\gamma_{d}} \log \bigg(1+ \frac{1}{(\gamma_{d} \log x)^2} \bigg)  \bigg),
\end{equation}
where $\oh+ i\gamma_{d}$ ranges over the non-trivial zeros of $ L(s,E_{d})$. For $d\in\mathcal{E}$ with $L(\frac{1}{2},E_d)\neq 0$, defining
$$
S_0(E_d) = L(\oh, E_d)  \frac{|E_d(\Bbb{Q})_{\mathrm{tors}}|^2}{\Omega(E_d)\Tam(E_d)},
$$
by \eqref{RaSo-prop1} and the argument leading to \eqref{key-expression-SEd}, we have
\begin{align*}%\label{key-expression-S0Ed}
 \begin{split}
&\log (S_0(E_d)/\sqrt{|d|} ) - \mu(E) \log\log |d|\\
& =   \mathcal{P}(d; x) - \mathcal{C}(d;x)
 +O\bigg( (\log\log\log X)^2 +
  \sum_{\gamma_{d}} \log \bigg(1+ \frac{1}{(\gamma_{d} \log x)^2} \bigg)  \bigg)
  \end{split}  
\end{align*}
for all but at most $\ll X/\log\log\log X$  $d\in\mathcal{E}$ with $X\le |d|\le 2X$. As both Propositions \ref{key-prop-2} and \ref{P-C-moments} are also valid with $\mathcal{F} (\kappa, a)$ being replacing by
$$
\mathcal{E} (\kappa, a) = \{d \in \mathcal{E} : \kappa d > 0,\enspace d \equiv a\enspace \mymod{N_0}\},
$$
processing a similar argument as in Section \ref{proof-of-main-thm} (together with two approximations above) then yields the following variant of Theorem \ref{main-thm}.

\begin{thm}
Assume GRH for the family of twisted $L$-functions $L(s,E \otimes \chi)$ with all Dirichlet characters $\chi$.
For any fixed $ \underline{\alpha}=(\alpha_1,\alpha_2)$ and  $\underline{\beta}=(\beta_1,\beta_2)$,  as $ X \rightarrow \infty$, 
\begin{align*}
\begin{split}
\#\bigg\{ d\in\mathcal{E},
X< |d|\le 2X :\, & \frac{ \log L(\oh,E_{d}) + \frac{1}{2}\log \log |d| }{\sqrt{ \log\log |d|}}\in (\alpha_1,\beta_1),\\
& \frac{\log (S_0(E_d)/\sqrt{|d|} ) - \mu(E) \log\log |d|}{\sqrt{\sigma(E)^2 \log\log |d|}}
\in (\alpha_2,\beta_2)\bigg\} 
\end{split}
\end{align*}
is greater or equal to
$$
\frac{1}{4} (\Xi_E(\underline{\alpha},\underline{\beta})+o(1) )
\# \{ d \in\mathcal{E} :  X< |d|\le 2X \}, 
$$
where $\Xi_E$ is defined as in \eqref{def-XiE-KE}. Moreover, under BSD for elliptic curves with analytic rank zero, the above assertion is true with $S_0(E_d)$ being replaced by $|\Sha(E_d)|$.
\end{thm}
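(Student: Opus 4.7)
The plan is to mirror the proof of Theorem \ref{main-thm} almost verbatim, with $\mathcal{F}$ replaced by $\mathcal{E}$ throughout and the derivative approximation Proposition \ref{new-prop1} replaced by the rank-zero expansion \eqref{RaSo-prop1}. The excerpt already supplies, for all but $O(X/\log\log\log X)$ discriminants $d\in\mathcal{E}$ with $X\le|d|\le 2X$, both
\begin{equation*}
\log L(\oh,E_d)+\oh\log\log|d|=\mathcal{P}(d;x)+O\bigg(\log\log\log X+\sum_{\gamma_d}\log\bigg(1+\frac{1}{(\gamma_d\log x)^2}\bigg)\bigg),
\end{equation*}
and the displayed two-term representation of $\log(S_0(E_d)/\sqrt{|d|})-\mu(E)\log\log|d|$ in terms of $\mathcal{P}(d;x)-\mathcal{C}(d;x)$. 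Hence, up to control of the zero sum, the joint distribution problem for $(\log L(\oh,E_d),\log S_0(E_d))$ reduces to a joint distribution problem for $(\mathcal{P}(d;x),\mathcal{P}(d;x)-\mathcal{C}(d;x))$ over $d\in\mathcal{E}$.

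The proofs of Propositions \ref{key-prop-2} and \ref{P-C-moments} use only the congruence and sign data packaged into $\mathcal{F}(\kappa,a)$, so they transfer unchanged to $\mathcal{E}(\kappa,a)$. Feeding the resulting mixed moments \eqref{P-C-moments-1} and \eqref{P-C-h-moments} into the method of moments and invoking the Cram\'er--Wold device exactly as in the proof of Lemma \ref{lemma-H} delivers the bivariate normal limit with covariance $\mathfrak{K}_E$ for $(\mathcal{R}_1(d;X),\mathcal{R}_2(d;X))$ over $d\in\mathcal{E}(\kappa,a)$, in both the unweighted and the $h$-weighted averages, with $M=1$.

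The key step is extracting the constant $\tfrac14$. Choose $h$ to be the Fej\'er kernel and take $L=(2-\eta)\log X$; because there is no forced central zero in the even-symmetry family, the $h$-weighted density is $\tfrac{1}{1-\eta/2}+\tfrac12+o(1)$ times the unweighted count. Let $\mathcal{Z}$ be the set of $d\in\mathcal{E}$ such that $L(s,E_d)$ has no zero at $s=\oh$ and no zero $\oh+i\gamma_d$ with $0<|\gamma_d|\le((\log X)(\log\log X))^{-1}$. For any $d\notin\mathcal{Z}$, either $L(\oh,E_d)=0$, in which case the functional equation $\Lambda(s,E_d)=\Lambda(1-s,E_d)$ forces a zero of even multiplicity at $s=\oh$ contributing at least $2h(0)=2$ to $\sum_{\gamma_d}h(\gamma_d L/(2\pi))$, or there is a nonzero low-lying zero, which comes with its complex conjugate and again contributes $2+o(1)$. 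Combining this lower bound with the $h$-weighted asymptotic and rearranging gives
\begin{equation*}
\sum_{\substack{d\in\mathcal{E}(\kappa,a)\cap\mathcal{Z}\\\mathcal{R}_i(d;X)\in(\alpha_i,\beta_i)\,\forall i}}\Phi\Big(\frac{\kappa d}{X}\Big)\ge\Big(\frac14-o(1)\Big)(\Xi_E+o(1))\sum_{d\in\mathcal{E}(\kappa,a)}\Phi\Big(\frac{\kappa d}{X}\Big)
\end{equation*}
after sending $\eta\to 0$ and summing over $(\kappa,a)$.

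To finish, the $M=1$ case of Lemma \ref{RS-lemma2} removes $O(X/\log\log\log X)$ further discriminants so that the zero-sum error in both approximations is $O((\log\log\log X)^3)=o(\sqrt{\log\log X})$; the survivors in $\mathcal{Z}$ then satisfy the range conditions of the theorem. For the $|\Sha(E_d)|$ refinement, every $d\in\mathcal{Z}$ has $L(\oh,E_d)\neq 0$ and hence analytic rank zero, so under BSD for rank-zero curves the exact formula yields $S_0(E_d)=|\Sha(E_d)|$ identically on $\mathcal{Z}$. The main technical obstacle is the bookkeeping in the third paragraph: one must verify that every $d\notin\mathcal{Z}$ contributes at least $2+o(1)$ (rather than merely $1+o(1)$) to $\sum_{\gamma_d}h(\gamma_d L/(2\pi))$, as it is this pairing---double central zeros from the even functional equation, or conjugate pairs of low-lying zeros---that drives the constant down to the sharp $\tfrac14$.
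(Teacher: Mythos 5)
Your proposal is correct and reproduces the paper's intended argument: the paper only sketches the proof (``processing a similar argument as in Section~\ref{proof-of-main-thm}''), and you have filled in exactly the right details---transferring Propositions~\ref{key-prop-2} and~\ref{P-C-moments} to $\mathcal{E}(\kappa,a)$, substituting the rank-zero approximation~\eqref{RaSo-prop1} for Proposition~\ref{new-prop1}, and noting that the absence of a forced central zero changes the Lemma~\ref{lemma-G}/\ref{lemma-H} bookkeeping from $1-M/(2-\eta)+M/4$ to $1-\tfrac12(\tfrac{1}{1-\eta/2}+\tfrac12)$, giving $\tfrac14$ as $\eta\to0$. Your emphasis on the pairing phenomenon (even-multiplicity central zeros or conjugate low-lying pairs each contributing $2+o(1)$ to the Fej\'er-weighted zero sum) is precisely the point the paper leaves implicit, and your observation that $d\in\mathcal{Z}$ implies analytic rank zero so that BSD gives $S_0(E_d)=|\Sha(E_d)|$ is the correct justification of the final clause.
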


In closing this section,  with \cite[Conjecture 1]{RaSo} of {Radziwi\l\l} and Soundararajan, Conjecture \ref{R1-conj}, and the result above in mind, we shall impose the following conjecture.

\begin{conj}
In the notation of Conjecture \ref{R1-conj}, let $\Xi_E(\underline{\alpha},\underline{\beta}) $ and 
$\mathfrak{K}_E $ be as in \eqref{def-XiE-KE}. Then as $d$ ranges over $\mathcal{E}$, the joint distribution of $\log L(\oh,E_d)$ and $\log(|\Sha(E_d)|/ \sqrt{|d|})$
 is approximately bivariate with mean ${\bf 0}_{2}$ and covariance matrix $\mathfrak{K}_E $. More precisely, as $X\rightarrow \infty$,
\begin{align*}
\begin{split}
\#\bigg\{ d\in\mathcal{E},
20 <|d| \le X :\, & \frac{ \log L(\oh,E_{d}) + \frac{1}{2}\log \log |d| }{\sqrt{ \log\log |d|}}\in (\alpha_1,\beta_1),\\
& \frac{\log (\Sha(E_d)/\sqrt{|d|} ) - \mu(E) \log\log |d|}{\sqrt{\sigma(E)^2 \log\log |d|}}
\in (\alpha_2,\beta_2)\bigg\} 
\end{split}
\end{align*}
is asymptotic to
$
(\Xi_E(\underline{\alpha},\underline{\beta}) +o(1)) \# \{ d \in\mathcal{E} : 20< |d| \le X \}.  
$
\end{conj}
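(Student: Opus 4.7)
The plan is to run the method-of-moments plus Cramér-Wold framework that proves Theorem \ref{main-thm}, transplanted from the rank-one family $\mathcal{F}$ to the rank-zero family $\mathcal{E}$ of even orthogonal symmetry. First I would combine the approximation \eqref{RaSo-prop1} of Radziwi{\l}{\l}-Soundararajan for $\log L(\oh, E_d)$ with the companion expansion for $\log(S_0(E_d)/\sqrt{|d|}) - \mu(E)\log\log|d|$ derived in Section \ref{rmkRS}, which together represent the two marginals essentially as $\mathcal{P}(d;x)$ and $\mathcal{P}(d;x) - \mathcal{C}(d;x)$ respectively, up to errors driven by zero sums near $s = \oh$. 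Invoking BSD for rank-zero elliptic curves then lets one replace $S_0(E_d)$ by $|\Sha(E_d)|$ on all but $\ll X/\log\log\log X$ discriminants, reducing the conjecture to establishing joint Gaussianity of the pair $(\mathcal{P}(d;x), \mathcal{P}(d;x) - \mathcal{C}(d;x))$ over $d \in \mathcal{E}(\kappa, a)$ with the claimed covariance.

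As remarked in the text just before the conjecture, Propositions \ref{key-prop-2} and \ref{P-C-moments} remain valid verbatim over $\mathcal{E}(\kappa, a)$, so the joint moments of $(b\mathcal{P}(d;x) + c(\mathcal{P}(d;x) - \mathcal{C}(d;x)))^k$ are supplied by \eqref{P-C-moments-1} and match those of a centred Gaussian of variance $b^2 + 2bc + c^2\sigma(E)^2$. Since $\mathfrak{K}_E$ is positive definite by the Sylvester-criterion observation used in the proof of Lemma \ref{lemma-H}, there exists a target bivariate Gaussian $(\mathcal{R}_1, \mathcal{R}_2)$ with covariance $\mathfrak{K}_E$. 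After rescaling by $\sqrt{\log\log X}$ and $\sqrt{\sigma(E)^2 \log\log X}$, every real linear combination $b\mathcal{R}_1(d;X) + c\sigma(E)\mathcal{R}_2(d;X)$ then converges in law to $b\mathcal{R}_1 + c\sigma(E)\mathcal{R}_2$, and the Cramér-Wold device delivers joint convergence of $(\mathcal{R}_1(d;X), \mathcal{R}_2(d;X))$ to $(\mathcal{R}_1, \mathcal{R}_2)$---provided we can pass from zero-weighted counts of $d$ to unweighted ones.

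The main obstacle is exactly this last passage. The proof of the rank-zero theorem at the end of Section \ref{rmkRS} only produces the factor $\tfrac{1}{4}$ because the zero-weighted moment \eqref{P-C-h-moments} is exploited via the non-negativity of $\sum_{\gamma_d} h(\gamma_d L / 2\pi)$ to yield a one-sided inequality, and there is no matching upper bound for the density of zeros of $L(s, E_d)$ in the shrinking window $|\gamma_d| \le ((\log X) \log\log X)^{-1}$. A proof of the full asymptotic therefore seems to require either (i) a non-vanishing / low-lying-zero density estimate showing that for $1 - o(1)$ of $d \in \mathcal{E}$, $L(s, E_d)$ has no zero in that window---essentially the quadratic-twist analogue of the Keating-Snaith non-vanishing conjecture for central $L$-values---or (ii) a mollifier analysis in the spirit of Bui, Evans, Lester, and Pratt \cite{BELP}, adapted from Dirichlet characters to quadratic twists, that computes the joint moment generating function of $(\log L(\oh, E_d), \log(|\Sha(E_d)|/\sqrt{|d|}))$ directly. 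Route (ii) seems the more realistic path but would demand asymptotic second moments of $L(s, E \otimes \chi_d)$ twisted by a mollifier of length approaching $\sqrt{|d|}$, which is well beyond current off-diagonal technology in the twist aspect.
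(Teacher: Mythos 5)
The statement you were asked to prove is a \emph{Conjecture} in the paper; it is never proven there, and no proof is intended. What the paper actually establishes, in the (unnamed) theorem immediately preceding this conjecture in Section~\ref{rmkRS}, is the conditional lower bound with constant $\frac{1}{4}$ for the same joint count, with $S_0(E_d)$ in place of $|\Sha(E_d)|$, upgraded to $|\Sha(E_d)|$ under rank-zero BSD. Your proposal correctly reconstructs the machinery behind that lower bound (using~\eqref{RaSo-prop1} and the $S_0$-expansion to represent the two marginals as $\mathcal{P}(d;x)$ and $\mathcal{P}(d;x)-\mathcal{C}(d;x)$, noting that Propositions~\ref{key-prop-2} and~\ref{P-C-moments} transfer to $\mathcal{E}(\kappa,a)$, then invoking Cram\'er--Wold), and, more importantly, you correctly identify the precise obstruction to promoting it to an asymptotic: the one-level-density step uses the non-negativity of $\sum_{\gamma_d} h(\gamma_d L/2\pi)$ to obtain a one-sided inequality by discarding the discriminants $d\notin\mathcal{Z}$, and GRH alone supplies no matching upper bound on how many $d\in\mathcal{E}$ have low-lying zeros of $L(s,E_d)$ in the window $|\gamma_d|\le((\log X)\log\log X)^{-1}$. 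Your two proposed remedies --- a non-vanishing/low-lying-zero density result, or a BELP-style mollified joint-moment computation adapted to quadratic twists --- are indeed the natural directions, and the fact that both are beyond current technology is exactly consistent with the paper presenting this statement as a conjecture rather than a theorem. So there is no ``missing step'' in your write-up to fix; rather, your diagnosis that a full proof is out of reach, with only the $\frac{1}{4}$-lower bound provable, matches the paper's own framing.
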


\section*{Acknowledgments}
The author thanks Amir Akbary, Po-Han Hsu, Nathan Ng, and Quanli Shen for their helpful comments and  suggestions.

\end{document}